\documentclass{amsart}
\usepackage{latexsym}
\usepackage{enumerate}
\usepackage{amsmath, amssymb, amsfonts, amsxtra, amsthm}
\usepackage[active]{srcltx}

\newcommand {\rea}{\mathbb{R}}
\newcommand {\BBZ}{\mathbb{Z}}
\def\P{{\bf P}}
\def\T{{\bf T}}
\def\<{\left\langle}
\def\>{\right\rangle}
\newcommand {\size}{\mathop{\mathrm{size}}}
\newcommand {\sgn}{\mathop{\mathrm{sgn}}}
\newcommand {\ov}{\overline}
\newcommand {\calI}{\mathcal{I}}
\newcommand {\mfg}{\mathfrak{g}}
\newcommand {\mfb}{\mathfrak{b}}
\newcommand {\calD}{\mathcal{D}}
\newcommand {\calS}{\mathcal{S}}
\newcommand {\calN}{\mathcal{N}}
\newcommand {\diam}{\mathop{\mathrm{diameter}}}
\newcommand {\fraS}{\mathfrak{S}}
\newcommand {\fraA}{\mathfrak{A}}
\newcommand {\calA}{\mathcal{A}}

\newtheorem{theorem}{Theorem}[section]

\newtheorem{proposition}[theorem]{Proposition}

\newtheorem{lemma}[theorem]{Lemma}
\newtheorem{corollary}[theorem]{Corollary}

\author{Richard Oberlin}
\address{
Mathematics Department\\
Louisiana State University\\
Baton Rouge, LA}
\email{oberlin@math.lsu.edu}
\thanks{The author is supported in part by NSF Grant DMS-1068523.}

\subjclass[2000]{Primary 42A20; Secondary 42A45}

\begin{document}
\title[The Walsh model for $M^{q,*}$ Carleson]{Bounds on the Walsh model for $M^{q,*}$ Carleson and related operators}

\begin{abstract}
We prove an extension of the Walsh-analog of the Carleson-Hunt theorem, where the $L^\infty$ norm defining the Carleson maximal operator has been replaced by an $L^q$ maximal-multiplier-norm. Additionally, we consider certain associated variation-norm estimates.
\end{abstract}

\maketitle

\section{Introduction}

Given a real-valued function $f$ on $\rea^+$ consider the partial Walsh-sum operator, defined for $\xi,x \in \rea^{+}$
\begin{equation} \label{psdef}
\calS[f](\xi,x) = (\hat{f}\ 1_{[0,\xi)})\check{\ }(x).
\end{equation}
where $\hat{\ }$ and $\check{\ }$ refer to the Walsh-Fourier transform (terminology and notation will be explained in detail in Section \ref{defsection} and in a paragraph at the end of this section). The operator above can also be written using a wave-packet decomposition 
\[
\calS[f](\xi,x) = \sum_{P} \<f,\phi_{P_l}\> \phi_{P_l}(x)1_{\omega_{P_u}}(\xi)
\]
where we sum over all bitiles $P$ and $\phi_{P_l}$ is the $L^2$ normalized wave-packet corresponding to the lower half of $P$. Additionally, we will need a truncated version of $\calS$, defined for each integer $k$
\[
\calS_k[f](\xi,x) = \sum_{P : |I_P| < 2^k } \<f,\phi_{P_l}\> \phi_{P_l}(x)1_{\omega_{P_u}}(\xi).
\]

It is well-known that the Walsh-analog of the Carleson-Hunt theorem holds, namely that for $1 < p < \infty$
\begin{equation} \label{wchtheorem}
\|\calS[f](\xi,x)\|_{L^p_x(L^\infty_\xi)} \leq C_p \|f\|_{L^p}.
\end{equation}
We are interested in versions of the bound above where $L^\infty$ is replaced by various stronger norms. 

Given a function $m$ on $\rea^+$ and an exponent $1 \leq q \leq \infty$ consider the Walsh $L^q$-multiplier norm of $m$
\[
\|m\|_{M^q}  = \sup_{g : \|g\|_{L^q} = 1} \|(m \hat{g})\check{\ }\|_{L^q}.
\]
Replacing $m$ by a sequence of functions $\{m_k\}_{k \in \BBZ}$, one can also define a Walsh $L^q$-maximal-multiplier norm
\[
\|m\|_{M^{q,*}}  = \sup_{g : \|g\|_{L^q} = 1} \|(m_k \hat{g})\check{\ }(x)\|_{L^q_x(\ell^\infty_k)}.
\]
The Walsh-$M^{2,*}$-Carleson theorem was proven by Demeter, Lacey, Tao, and Thiele \cite{demeter08twm}. Specifically, they showed that if $1 < p < \infty$ then
\begin{equation} \label{walshM2starbound}
\|\calS_k[f](\xi,x)\|_{L^p_x(M^{2,*}_{\xi,k})} \leq C_p \|f\|_{L^p}.
\end{equation}
The main result of this paper is to extend the theorem above to cover exponents $q < 2,$\footnote{It seems likely that the range $q > 2$ is also tractable, see comments in \cite{demeter09osm}, however that case is not of particular interest for applications related to the return times theorem due to the monotonicity of $L^q$ norms in probability spaces.} namely we will prove 
\begin{theorem} \label{walshMpstar}
Suppose that $1 < p < \infty$ and $1 < q < 2$ satisfy $\frac{1}{p} + \frac{1}{q} < \frac{3}{2}.$ Then
\[
\|\calS_k[f](\xi,x)\|_{L^p_x(M^{q,*}_{\xi,k})} \leq C_{p,q} \|f\|_{L^p}.
\]
\end{theorem}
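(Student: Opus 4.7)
The plan is to adapt the standard Walsh time-frequency machinery (wave-packet decomposition, size/energy tree decomposition, single-tree estimates) developed for the $q=2$ result in \cite{demeter08twm}, with the main new ingredient being a single-tree estimate for $M^{q,*}$ valid in the regime $q<2$.

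First I would reduce to a restricted weak-type inequality. By interpolation with the trivial $L^\infty$ bound, it suffices to show that for measurable sets $F,G \subset \rea^+$ of finite measure there is a major subset $G' \subset G$ with $|G'| \geq |G|/2$ and
\[
\int_{G'} \|\calS_k[1_F](\xi,x)\|_{M^{q,*}_{\xi,k}}\, dx \lesssim |F|^{1/p} |G|^{1/p'}.
\]
The major subset is obtained by removing the exceptional set where the Walsh-Carleson maximal operator applied to $1_F$ exceeds a constant multiple of the density $|F|/|G|$; one then restricts attention to bitiles $P$ with $I_P$ not contained in this exceptional set, which furnishes the standard density and mass bounds on $\<1_F,\phi_{P_l}\>\phi_{P_l}1_G$.

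The key technical step is the single-tree estimate. For a tree $\T$ with top $(I_\T,\omega_\T)$, the Walsh wave-packet structure gives a multiplicative factorization of $\phi_{P_l}$ for $P \in \T$, so that the partial-sum operator restricted to $\T$ becomes, up to a modulation by $\phi_{\T_l}$, a Walsh Carleson-type operator on $I_\T$ applied to a restricted function. To control the $M^{q,*}$ norm in $\xi$ for this operator when $q<2$, I would pass through a variation-norm expression: by the Coifman--Rubio de Francia--Semmes embedding $V^r \hookrightarrow M^q$ (valid when $|1/q-1/2|<1/r$), the $M^{q,*}$ norm in $k$ is dominated by the $r$-variation norm in $k$ for $r$ slightly larger than $2$ when $q$ is close to $2$. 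This reduces the single-tree bound to an $r$-variation estimate for Walsh partial sums on a tree, which can be proved by a Walsh analog of L\'epingle's inequality combined with a John--Nirenberg/BMO control against the tree size.

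The remaining step is the global combinatorial estimate: run the standard size/energy selection algorithm, organizing bitiles into collections of size $2^{-n}$ whose tree tops have total measure controlled by an $L^2$-energy estimate and the density restriction. Summing the single-tree contributions over $n$ yields a geometric series which converges precisely when $\frac{1}{p}+\frac{1}{q}<\frac{3}{2}$, and this is where the hypothesis enters. The main obstacle will be the single-tree variation-norm estimate, and in particular calibrating the loss in the $V^r\hookrightarrow M^{q,*}$ passage against the tree energy bounds so as to produce exactly the claimed exponent range; in the $q=2$ case one has direct $L^2$ orthogonality on a tree, whereas for $q<2$ the variation-norm detour introduces parameters that confine the admissible $(p,q)$ to the half-plane $\frac{1}{p}+\frac{1}{q}<\frac{3}{2}$.
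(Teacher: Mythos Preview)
Your proposal has a genuine gap in the treatment of the maximal-over-truncations aspect of the $M^{q,*}$ norm. The Coifman--Rubio de Francia--Semmes embedding says $\|m\|_{M^q}\lesssim\|m\|_{V^r_\xi}$ for a \emph{single} multiplier $m(\xi)$; it does not give any control of a maximal-multiplier norm $\|m_k\|_{M^{q,*}}$ in terms of an $r$-variation in $k$. Your sentence ``the $M^{q,*}$ norm in $k$ is dominated by the $r$-variation norm in $k$'' conflates two different variation norms (in $\xi$ versus in $k$) and is not a valid inequality. Consequently your single-tree step yields at best an $M^q$ bound per tree, not an $M^{q,*}$ bound, and nothing in the proposal explains how the supremum over truncations $k$ --- which couples all trees simultaneously --- is to be handled.

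The paper's proof addresses exactly this point, and the mechanism is not a single-tree estimate at all but a \emph{stack-of-trees} estimate. One first shows (via a pointwise partitioning lemma requiring a new ``$l$-convexity'' notion in the tree selection) that for a fixed $x$ the function $\xi\mapsto\sum_{P}c_P1_{\omega_{P_u}}(\xi)$ restricted to a stack of $N$ trees agrees with the single-tree sum on each of $N$ disjoint intervals. The truncated sum $\sum_{|I_P|<2^k}$ is then decomposed into a $k$-independent piece (handled by CRS) plus pieces of the form $\calD_k\cdot(\text{something})$, where $\calD_k$ is a multiplier supported near the tree frequencies. The $M^{q,*}$ norm of the latter is controlled by a Bourgain-type maximal-multiplier lemma; crucially, the paper proves a \emph{hybrid} of Bourgain's lemma and CRS giving a bound $\lesssim N^{\frac{1}{q}-\frac{1}{r}+\epsilon}$ on the stack, and explicitly notes that the na\"{\i}ve composition of the two lemmas would give $N^{(\frac{1}{q}-\frac{1}{r})+(\frac{1}{q}-\frac{1}{2})+\epsilon}$, which is too lossy to recover the range $\frac{1}{p}+\frac{1}{q}<\frac{3}{2}$. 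None of this machinery --- Bourgain's lemma, the hybrid estimate, or the pointwise partitioning --- appears in your outline, and without it the argument does not close.
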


Since the $M^2$ norm is equal to the $L^\infty$ norm, one sees that the difference between \eqref{wchtheorem} and \eqref{walshM2starbound} is that the $M^2$ norm of the former bound is replaced by an $M^{2,*}$ norm in the latter bound, where the $^*$ refers to a maximum over truncations. Thus, when approaching Theorem \ref{walshMpstar}, one might first ask whether the corresponding bound holds with the $M^q$ norm in place of the $M^{q,*}$ norm. As we will now see, the affirmative answer to this question follows from combining work in \cite{oberlin09vnw}, which preceded a result for the Fourier-transform \cite{oberlin09vnc}, with the Walsh-analog of \cite{coifman88mdf}.

Given an exponent $r$ and a function $m$ defined on a subset of $\rea$ and taking values in some normed-linear space (in this paper, the subset of $\rea$ will be $\rea^+$ or $\BBZ$, and except for part of Section \ref{varmulsection} the normed linear space will be $\rea$) consider the $r$-variation norm
\[
\|m\|_{V^r} = \|m\|_{L^{\infty}} + \sup_{N,\xi_0 < \cdots < \xi_N} \left(\sum_{i=1}^{N} |m(\xi_{i}) - m(\xi_{i-1})|^r\right)^{1/r}.
\]
where the supremum is over all strictly increasing finite-length sequences in the domain of $m$.
It was proven in \cite{oberlin09vnw} (and we will give another proof here, see Section \ref{poftsection}) that if $r > 2$ and $p > r'$ then 
\begin{equation} \label{mainthm}
\|\calS[f](\xi,x)\|_{L^p_x(V^r_\xi)} \leq C_{p,r} \|f\|_{L^p}.
\end{equation}
Applying the method of Coifman, Rubio de Francia, and Semmes \cite{coifman88mdf} to Walsh-multipliers, one sees (as in Lemma \ref{Walshcrs} below) that if $r \geq 2$ and $|\frac{1}{q} - \frac{1}{2}| < \frac{1}{r}$ then for functions $m$ on $\rea^+$
\begin{equation} \label{crsWalshbound}
\|m\|_{M^q} \leq C_{q,r} \|m\|_{V^r}.
\end{equation}
Hence, it follows from \eqref{mainthm} that when $1 < p < \infty$ and $1 < q < \infty$ satisfy $\frac{1}{p} + \frac{1}{q} < \frac{3}{2}$
\begin{equation} \label{MqCarlesonbound}
\|\calS[f](\xi,x)\|_{L^p_x(M^{q}_{\xi})} \leq C_{p,q} \|f\|_{L^p}.
\end{equation}

It is thus clear that, as in \cite{demeter08twm}, the task at hand is to replace the $M^q$ norm in \eqref{MqCarlesonbound} with the $M^{q,*}$ norm. 
Roughly speaking, in \cite{demeter08twm} this advance was obtained by incorporating the use of Lemma \ref{varbourgainlemma} below into a proof of \eqref{wchtheorem} (this statement slurs over many technical obstructions, in fact their method required the development of a substantially new proof of \eqref{wchtheorem}).
We will follow the same approach, but with some necessary refinements which we now detail. 

First, we replace \eqref{crsWalshbound} and the natural $L^q$ extension of Lemma \ref{varbourgainlemma} with a common extension of the two bounds which is more efficient than their separate applications. We develop a new proof of \eqref{mainthm} which (as in the proof of \eqref{wchtheorem} from \cite{demeter08twm}) gives pointwise control for the sum over bitiles in a stack of trees in terms of the restrictions of the sum to individual trees. Obtaining this pointwise control for variation, rather than $L^\infty$, norms requires a more careful decomposition into $l$-overlapping trees, in particular the use of a concept of ``l-convexity''. This decomposition allows us to obtain an explicit partitioning of $\rea^+$ into intervals, on which the sum over a stack of trees agrees with its restriction to an individual tree. Finally, to control the variation-norm for an individual $l$-overlapping tree we use phase-space projections, as in \cite{demeter09osm}.

\subsection{Motivation} A significant part of our interest in Theorem \ref{walshMpstar} is due to its role as a model case for the corresponding Fourier-transform problem. Let $\Psi$ be (say) a Schwartz function on $\rea$ and for $f$ defined on $\rea$, $\xi,x \in \rea,$ and $k \in \BBZ$ consider the truncated partial Fourier-sum operator
\[
\fraS_k[f](\xi,x) = p.v. \int f(x - t) e^{2 \pi i \xi t} \Psi(2^{-k}t)\frac{1}{t}\ dt.
\]    
It was proven in \cite{demeter08bdr} that for $1 < p < \infty$ 
\begin{equation} \label{fourierM2starbound}
\|\fraS_k[f](\xi,x)\|_{L^p_x(M^{2,*}_{\xi,k})} \leq C_p \|f\|_{L^p}
\end{equation}
and it would be desirable to extend this result, as we have now done for the Walsh-model, to cover exponents $q < 2$.
One reason for interest in bounds such as \eqref{fourierM2starbound} is their application to the return times problem for the truncated Hilbert transform. It was shown in \cite{demeter08bdr} that bounds similar to \eqref{fourierM2starbound} can be used to deduce that given a measure preserving system $(X,T)$ and a function $f \in L^p(X), p > 1$ one can obtain a set $X'$ of full measure in $X$ such that for every $x \in X'$, every second measure preserving system $(Y,U)$, and every $g \in L^q(Y), q \geq 2$ the sums
\begin{equation} \label{htrtt}
\sideset{}{'}\sum_{n = -N}^N \frac{1}{n}f(T^nx)g(U^ny)
\end{equation}
converge as $N \rightarrow \infty$ for almost every $y \in Y.$ An extension of the range of exponents in \eqref{fourierM2starbound} could be used to extend the range of exponents for the pointwise convergence result.

Theorems similar to the convergence result above were originally considered \cite{bourgain88rtd} in the context of averages\footnote{which, at least morally, is an easier setting: one manifestation of this is that the analog of the Carleson-Hunt theorem for $\fraA:=\fraA_0$ is trivial.}, where one is interested in sums
\begin{equation} \label{srtt}
\frac{1}{N}\sum_{n = 1}^N f(T^nx)g(U^ny).
\end{equation}
Here, the relevant analog of $\fraS_k$ is
\[
\fraA_k[f](\xi,x) = \int f(x - t) e^{2 \pi i \xi t} 2^{-k}\Psi(2^{-k}t)\ dt.
\]
In \cite{demeter08bdr} it was shown that \eqref{fourierM2starbound} holds with $\fraA_k$ in place of $\fraS_k$ and this bound has been extended \cite{demeter09irt}, \cite{nazarov10czd} to cover the range $\frac{1}{p} + \frac{1}{q} < \frac{3}{2}$. The Walsh-analog of $\fraA_k$ would be 
\[
\calA_k[f](\xi,x) = \sum_{p : |I_p| = 2^k } \<f,\phi_{p}\> \phi_{p}(x)1_{\omega_{p}}(\xi)
\]  
where above we sum over all tiles $p$. It seems likely that bounds for $\calA_k$ could be obtained using a Walsh-analog of the method \cite{demeter09irt}.

\subsection{Further results}

Using a method from \cite{nazarov10czd}, see Section \ref{varmulsection}, one can obtain a variation-norm version of Lemma \ref{varbourgainlemma}. Substituting this lemma into the proof of Theorem \ref{walshMpstar} gives the stronger 
\begin{theorem} \label{walshMpvar}
Suppose that $s > 2$, $1 < p < \infty.$ and $1 < q < 2$ satisfy $\frac{1}{p} + \frac{1}{q} < \frac{3}{2}.$ Then
\begin{equation} \label{walshMpvarbound}
\|\calS_k[f](\xi,x)\|_{L^p_x(M^{q,s}_{\xi,k})} \leq C_{p,q,s} \|f\|_{L^p}
\end{equation}
\end{theorem}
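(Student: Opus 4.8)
The plan is to deduce Theorem \ref{walshMpvar} from the proof of Theorem \ref{walshMpstar} by isolating the one ingredient that genuinely uses the $\ell^\infty$-over-truncations structure and upgrading it. Looking at the outline given just before the Motivation subsection, the proof of Theorem \ref{walshMpstar} combines three things: (i) the new proof of \eqref{mainthm} giving pointwise control of a sum over a stack of trees by the restrictions to individual trees, via an $l$-convex decomposition and an explicit partition of $\rea^+$ into intervals; (ii) the phase-space-projection estimate controlling the variation norm on a single $l$-overlapping tree; and (iii) a maximal-truncation lemma (the natural $L^q$-extension of Lemma \ref{varbourgainlemma}, combined with \eqref{crsWalshbound}) used to pass from $M^q$-type control to $M^{q,*}$-type control. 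Only ingredient (iii) is sensitive to whether the target norm on the $k$-variable is $\ell^\infty_k$ (giving $M^{q,*}$) or $V^s_k$ (giving $M^{q,s}$). So the first step is to state precisely the variation-norm strengthening of Lemma \ref{varbourgainlemma} alluded to in the Further results paragraph, whose proof is deferred to Section \ref{varmulsection} and follows the method of \cite{nazarov10czd}; I would simply cite that lemma here.

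The second step is to verify that this substitution is legitimate — i.e., that nowhere else in the argument for Theorem \ref{walshMpstar} is the $\ell^\infty_k$ structure used in a way incompatible with $V^s_k$. The key point, which should already be visible in the structure of the proof of Theorem \ref{walshMpstar}, is that the pointwise tree-by-tree control from (i) and the single-tree phase-space-projection bound from (ii) are proven for a fixed configuration and are agnostic to the outer norm applied in the $k$ variable: the $k$-dependence enters only through the family of truncated operators $\calS_k$, and the combinatorial/geometric estimates treat the whole family at once. Thus one arranges the argument so that the only place the truncation index is summed/maximized over is at the application of the relevant multiplier lemma, and there one feeds in the $V^s$ version instead of the $\ell^\infty$ version. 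Since $s > 2$, the hypothesis $|\frac1q - \frac12| < \frac1s$ needed to run \eqref{crsWalshbound}-type reasoning is available for $q$ in a neighborhood of $2$, and the condition $\frac1p + \frac1q < \frac32$ is exactly what the tree estimates in the proof of Theorem \ref{walshMpstar} require, so no new constraints on $p,q$ appear beyond those already stated.

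The main obstacle I anticipate is making the variation-norm maximal-multiplier lemma itself quantitatively strong enough — specifically, obtaining a bound of the form $\|m_k\|_{M^{q,s}} \lesssim (\text{something involving } \|m_k\|_{V^r_k \text{-type quantities}})$ with constants uniform in the relevant parameters, following \cite{nazarov10czd}. The method there typically proceeds by writing the $s$-variation over $k$ as a sum over dyadic (or long/short) blocks, applying a Rubio de Francia–Coifman–Semmes square-function estimate on each scale, and then summing with an $\ell^s$ (rather than $\ell^\infty$) Minkowski/interpolation step; the delicate part is that replacing the maximum by an $s$-variation costs a square-function-type term that must be reabsorbed using $s > 2$ and the vector-valued Walsh Calderón–Zygmund theory. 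Once that lemma is in hand with the correct dependence, splicing it into the proof of Theorem \ref{walshMpstar} is essentially mechanical: every estimate that previously produced an $\ell^\infty_k$ bound now produces the corresponding $V^s_k$ bound, and the $L^p_x$ outer integration and the restricted-type/interpolation scaffolding are unchanged. I would therefore present the proof of Theorem \ref{walshMpvar} as a short remark pointing to the modified lemma plus a line-by-line correspondence with the proof of Theorem \ref{walshMpstar}, deferring the genuinely new content to Section \ref{varmulsection}.
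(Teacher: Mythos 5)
Your proposal matches the paper's own route: Theorem \ref{walshMpvar} is obtained by replacing the Bourgain-type maximal-multiplier lemma (Lemma \ref{varbourgainlemma}) with its $V^s_k$ analog (Lemma \ref{Lpvarnocomplemma}, proved in Section \ref{varmulsection} via the $\lambda$-jump covering argument of \cite{nazarov10czd}), and observing that every other step in Lemma \ref{truncsumlemma} and Proposition \ref{poftprop} is agnostic to whether the inner $k$-norm is $\ell^\infty$ or $V^s$, yielding Lemma \ref{truncsumvarlemma} and hence the theorem. One small correction: the role of $s>2$ is not through a constraint like $|\tfrac{1}{q}-\tfrac{1}{2}|<\tfrac{1}{s}$; rather, the upgraded bound carries an extra factor $|\T|^{(\frac12-\frac1r)\frac{s}{s-2}}$, and one lets $r\to 2$ (with $s>r$ fixed) so that this exponent vanishes, recovering the same exponent $\alpha\to\tfrac{1}{q}-\tfrac{1}{2}$ in Proposition \ref{poftprop} and thus the same range $\tfrac1p+\tfrac1q<\tfrac32$.
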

\noindent where, given an exponent $s$, we define the $s$-variation-multiplier-norm of a sequence of functions $\{m_k\}_{k \in \BBZ}$
\[
\|m\|_{M^{q,s}}  = \sup_{g : \|g\|_{L^q} = 1} \|(m_k \hat{g})\check{\ }(x)|\|_{L^q_x(V^s_k)}.
\]
One reason for interest in bounds such as \eqref{walshMpvarbound} is that the analogous bounds for $\fraS_k$ and $\fraA_k$ would yield quantitative information about the convergence in \eqref{htrtt} and \eqref{srtt}. 

Through Corollaries \ref{pwcorollary2} and \ref{pwcorollary3}, we obtain the following variants of \eqref{mainthm}
\begin{theorem} \label{maxtruncvarmodtheorem}
Suppose that $r > 2$ and $p > r'.$ Then 
\[
\|\calS_k[f](\xi,x)\|_{L^p_x(\ell^{\infty}_k(V^r_\xi))} \leq C_{p,r} \|f\|_{L^p}.
\]
\end{theorem}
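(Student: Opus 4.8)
The plan is to obtain Theorem \ref{maxtruncvarmodtheorem} as a consequence of the pointwise estimates developed for Theorem \ref{walshMpstar}, rather than by running the single-scale argument of \eqref{mainthm} afresh. The key observation is that the quantity $\|\calS_k[f](\xi,x)\|_{\ell^\infty_k(V^r_\xi)}$ is dominated, up to the usual $V^r$-versus-$L^\infty$ comparison, by a maximal-multiplier-type object: for each fixed $x$ the family $\xi \mapsto \calS_k[f](\xi,x)$ is, as $k$ varies, exactly the sequence of truncated Walsh-multiplier sums applied to $f$, and the $V^r_\xi$ norm controls the relevant oscillation. So I would first reduce, by the $V^r \hookleftarrow$ argument already used to pass from \eqref{mainthm} to \eqref{crsWalshbound} (Lemma \ref{Walshcrs}) and by the variation-norm refinement of Lemma \ref{varbourgainlemma} discussed in Section \ref{varmulsection}, the $\ell^\infty_k(V^r_\xi)$ estimate to a sum over a stack of trees of the single-tree $V^r_\xi$ estimates controlled via phase-space projections as in \cite{demeter09osm}. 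This is precisely the machinery assembled for Theorem \ref{walshMpvar}, just read with the roles of the $k$-variable and the $\xi$-variable interchanged: there the outer norm is $V^s_k$ and the inner one is essentially $L^\infty_\xi$ (after the $M^q \to V^r$ passage), whereas here the outer norm is $\ell^\infty_k$ and the inner one is $V^r_\xi$.

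Concretely, the steps I would carry out are: (i) fix the truncation parameter and observe that $\calS_k[f](\cdot,x) - \calS_{k'}[f](\cdot,x)$ is a sum over bitiles $P$ with $2^{k'} \le |I_P| < 2^k$, so the $\ell^\infty_k$ supremum is a supremum over partial sums organized by scale; (ii) invoke the Bourgain-type lemma (Lemma \ref{varbourgainlemma}) in the form that trades an $\ell^\infty$ over truncations for a small loss in a variation exponent, so that $\ell^\infty_k$ is absorbed at the cost of passing from $r$ to some $r_0$ slightly larger — this is exactly what forces the hypothesis $r>2$ and $p>r'$ with room to spare; (iii) apply the pointwise tree-decomposition from the proof of Theorem \ref{walshMpstar}, using $l$-convexity to partition $\rea^+$ into intervals on which the stack-of-trees sum agrees with a single-tree sum, thereby reducing the $V^r_\xi$ norm of the full sum to a superposition (in an appropriate $\ell^r$ or tree-counting sense) of single-tree $V^r_\xi$ norms; (iv) bound each single-tree contribution by phase-space projections as in \cite{demeter09osm}, and sum up using the standard tree-selection / size-energy estimates underlying \eqref{wchtheorem}.

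I would package the two reductions in steps (ii)--(iii) exactly as Corollaries \ref{pwcorollary2} and \ref{pwcorollary3} are meant to be used — the statement of Theorem \ref{maxtruncvarmodtheorem} explicitly says it follows "through" those corollaries — so the bulk of the work is to check that the $l$-convexity decomposition, which was designed to handle variation in $\xi$ for a \emph{fixed} truncation, is uniform enough in $k$ to survive the outer $\ell^\infty_k$. That uniformity is plausible because the decomposition into $l$-overlapping trees depends on the tile configuration and not on which sub-collection of scales one retains, so truncating the bitile collection only removes trees and refines the partition; the single-tree estimates are then applied to the truncated trees.

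The main obstacle I anticipate is \emph{precisely} this interaction between the scale-truncation supremum and the $\xi$-variation: a single-tree $V^r_\xi$ bound is clean, but when one also takes $\sup_k$ one is effectively asking for a variation estimate that is stable under deleting the small-scale tiles of the tree, and a naive triangle inequality over scales would destroy the $r$-variation summing. The resolution — and the reason Theorem \ref{maxtruncvarmodtheorem} needs a strict inequality $p > r'$ rather than an endpoint — is to spend the slack in the exponents inside Lemma \ref{varbourgainlemma}, converting the $\ell^\infty_k$ into an $\ell^{r_1}_k$ for some finite $r_1$ at the price of enlarging $r$ slightly, after which the scale sum is an honest $\ell^{r_1}$ sum that can be interleaved with the $V^r_\xi$ estimate via the usual Rubio de Francia / Lépingle-type inequality for martingale-like differences in the Walsh setting. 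Verifying that this interleaving is compatible with the tree-counting bounds, i.e. that the resulting double sum over scales and trees still converges with the claimed constant $C_{p,r}$, is the one place where some genuine (though not deep) bookkeeping is required.
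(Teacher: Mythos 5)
Your high-level plan -- feed a pointwise stack-of-trees estimate (Corollary~\ref{pwcorollary2}) into the general Proposition~\ref{poftprop} -- is the right one, and steps (iii)--(iv) match the paper: Lemma~\ref{pointwiselemmau} partitions $\rea^+$ into intervals $\upsilon_T$ on which the stack-of-trees sum agrees with a single-tree sum, and the single-tree $V^r_\xi$ estimate is Lemma~\ref{globallemma} via phase-space projections. But your account of the crux -- step (ii) and your ``main obstacle'' paragraph -- is wrong in a way that matters.

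You predict that taking $\sup_k$ outside the $V^r_\xi$ norm creates a genuine difficulty, and you propose to resolve it by ``spending slack'' in Lemma~\ref{varbourgainlemma} to convert $\ell^\infty_k$ to some $\ell^{r_1}_k$ and then interleave with L\'epingle. That is not the mechanism, and it wouldn't work: Lemma~\ref{varbourgainlemma} is a maximal-multiplier bound (an operator-norm statement, $M^{2,*}$) for the multipliers $\calD_k$ acting on an arbitrary $L^q$ function; it has no purchase on the quantity in Corollary~\ref{pwcorollary2}, which is a \emph{pointwise} estimate for a fixed $x$ and a fixed set of coefficients $c_P = \<f,\phi_{P_l}\>\phi_{P_l}(x)$, with no multiplier operator in sight. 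The actual resolution in the paper is far more elementary and is exactly display~\eqref{trunceqfortree}: for a single tree $T$, the truncated sum $\sum_{P\in T,\,|I_P|<2^k} c_P 1_{\omega_{P_u}}(\xi)$ has $V^r_\xi$ norm bounded, uniformly in $k$, by a constant times the $V^r_\xi$ norm of the full sum $\sum_{P\in T} c_P 1_{\omega_{P_u}}(\xi)$. The reason is structural: for a $u$-overlapping tree the truncation at scale $k$ only alters the sum on the single dyadic interval $\omega_k$ about $\xi_T$, where it becomes constant equal to the untruncated value at any $\xi' \in \omega_{k-1}\setminus\omega_k$ (see~\eqref{kredundant} and the line after it); for an $l$-overlapping tree the truncation either does nothing or makes the sum vanish on $\omega_{k-1}$ (see~\eqref{kzero}). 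Because this bound is uniform in $k$, the $\sup_k$ in $\ell^\infty_k(V^r_\xi)$ costs nothing beyond a fixed constant, and the $|\T|^{1/r}$ factor then comes from summing $r$-variations over the partition $\{\upsilon_T\}$ exactly as in Corollary~\ref{pwcorollary}. There is no interleaving, no exponent loss from $r$ to $r_0$, and no Bourgain-type maximal inequality anywhere in the proof of this theorem. The constraints $r>2$ and $p>r'$ arise for unrelated reasons: $r>2$ is required by L\'epingle's inequality inside Lemma~\ref{globallemma}, and $p>r'$ is the condition $\alpha = 1/r < 1 - 1/p$ demanded by Proposition~\ref{poftprop}.
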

\begin{theorem} \label{maxmodvartrunctheorem}
Suppose that $r > 2$ and $1 < p < \infty.$ Then
\[
\|\calS_k[f](\xi,x)\|_{L^p_x(L^{\infty}_\xi(V^r_k))} \leq C_{p,r} \|f\|_{L^p}.
\]
\end{theorem}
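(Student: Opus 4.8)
The plan is to deduce Theorem \ref{maxmodvartrunctheorem} from the pointwise-control machinery that underlies the proof of \eqref{mainthm} (and of Theorem \ref{walshMpstar}), rather than from the end statements \eqref{mainthm} or Theorem \ref{walshMpstar} themselves. Observe first that the $V^r_k$ norm in the truncation parameter $k$ is taken of the sequence $\calS_k[f](\xi,x) = \sum_{P : |I_P| < 2^k} \<f,\phi_{P_l}\>\phi_{P_l}(x) 1_{\omega_{P_u}}(\xi)$, which for fixed $(\xi,x)$ is a \emph{monotone} filtration: as $k$ increases we only add bitiles $P$ with $2^{k-1} \le |I_P| < 2^k$, and among these only the (at most one, for each fixed $x$ and fixed scale) bitile whose $\omega_{P_u}$ contains $\xi$ actually contributes. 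So the increments of the sequence in $k$ are exactly the single-scale pieces $\sum_{P : |I_P| = 2^k, \xi \in \omega_{P_u}} \<f,\phi_{P_l}\>\phi_{P_l}(x)$, i.e. $\calA_k[f](\xi,x)$ in the notation of the introduction. The first step is therefore to reduce the $L^\infty_\xi(V^r_k)$ bound to a square-function-type or directly a variation estimate for the partial sums of the single-scale operators $\calA_k$, uniformly in $\xi$.

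Next I would run the standard tile-decomposition / tree-selection argument that proves \eqref{wchtheorem} in the Walsh setting, but organized so as to track the variation in $k$. Concretely: decompose the collection of bitiles into trees, and within a tree organize by the scale of the top. The key structural input — which the paper says it develops for \eqref{mainthm} via $l$-overlapping trees and ``$l$-convexity'' — is that on an explicit partition of $\rea^+$ into intervals, the sum over a stack of trees coincides with its restriction to a single tree. Feeding that pointwise identity in, the global $V^r_k$ estimate is reduced (up to summing a geometric/BMO-type series over scales of tree-tops, which is where $r > 2$ is used) to a single-tree variation estimate in $k$. For a single tree with top at scale $2^{k_0}$, the truncated sums $\calS_k$ restricted to the tree are, after the phase-space projection normalization used in \cite{demeter09osm}, essentially martingale-type partial sums of the wave-packet coefficients along the tree, for which the $r$-variation with $r>2$ is controlled by the Lépingle / Bourgain variational inequality (Lemma \ref{varbourgainlemma} and its variation-norm refinement from Section \ref{varmulsection}); the supremum over $\xi \in \omega_{P_u}$ of the top tile causes no loss because $1_{\omega_{P_u}}(\xi)$ is constant on the relevant frequency interval for each fixed scale. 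Summing the single-tree bounds over the selected trees via the usual size/energy (Bessel-type) estimates then yields the $L^p$ bound for $p > r'$, exactly as in the proof of \eqref{mainthm}.

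Alternatively — and this is probably the cleanest packaging — I would obtain Theorem \ref{maxmodvartrunctheorem} as one of the ``Corollaries \ref{pwcorollary2} and \ref{pwcorollary3}'' the excerpt already promises: the pointwise control of the stacked-tree sum by its single-tree restrictions is stated there in a form that simultaneously yields the $V^r_\xi$ estimate (Theorem \ref{maxtruncvarmodtheorem}) and the $L^\infty_\xi(V^r_k)$ estimate, since the two norms differ only in whether one varies the frequency cutoff $\xi$ or the truncation $k$, and in both cases the variation is taken of a filtration whose increments are single-tree objects amenable to Lépingle's inequality. The main obstacle, as in \cite{demeter08twm}, is exactly the pointwise-control step: establishing that after the $l$-overlapping / $l$-convex decomposition there is an \emph{honest} partition of $\rea^+$ on which the stacked sum literally equals a single-tree sum, so that the variation (a nonlinear, non-local quantity) can be localized tree-by-tree. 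Once that geometric fact is in hand, the passage to the $k$-variation here is genuinely easier than the $\xi$-variation, because the $k$-filtration is automatically monotone and its increments are single-scale, so no further convexity considerations beyond those already needed for Theorem \ref{maxtruncvarmodtheorem} are required; the remaining work — the Lépingle-type bound per tree and the size/energy summation over trees — is routine given the results cited earlier in the paper.
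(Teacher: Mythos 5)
Your overall plan matches the paper's: reduce to a pointwise estimate (Corollary \ref{pwcorollary3}) via the covering from Lemma \ref{pointwiselemmau}, then feed it into the size/energy machinery of Proposition \ref{poftprop}, with the single-tree variation controlled by a L\'{e}pingle-type bound (Lemma \ref{globallemma}). However, there is a real gap in the middle of your argument: you state that the final summation over trees ``yields the $L^p$ bound for $p > r'$, exactly as in the proof of \eqref{mainthm}.'' That is the range for Theorem \ref{maxtruncvarmodtheorem}, not for Theorem \ref{maxmodvartrunctheorem}, which holds for all $1 < p < \infty$. The difference is not cosmetic: the pointwise estimate in Corollary \ref{pwcorollary3} carries \emph{no} factor of $|\T|^{1/r}$, in contrast to Corollaries \ref{pwcorollary} and \ref{pwcorollary2}, because for each fixed $\xi$ the truncated sequence $k \mapsto \sum_{|I_P|<2^k} c_P 1_{\omega_{P_u}}(\xi)$ lives entirely on the single tree $T$ with $\xi \in \upsilon_T$, so no loss in the number of trees is incurred at all. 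This is precisely why one may take $\alpha = 0$ in \eqref{poftassumption}, and the condition $\alpha < \min(1 - 1/p, 1/2)$ then becomes vacuous, giving the full range $1 < p < \infty$. If you track the $k$-variation the same way you would track the $\xi$-variation (accumulating a $|\T|^{1/r}$ factor and then paying for it with $p > r'$), you will prove a strictly weaker statement.

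Your opening observation — that the increments in $k$ are single-scale pieces — is in the right spirit but is not quite what powers the paper's proof of Corollary \ref{pwcorollary3}. The paper instead shows (for a $u$-overlapping tree) that the partial sums $\sum_{|I_P|<2^k} c_P$ form a monotone reparametrization of $\xi \mapsto \sum_{P\in T} c_P 1_{\omega_{P_u}}(\xi)$ along the sequence of left endpoints $\xi_k$, so the $V^r_k$ norm is dominated by the $V^r_\xi$ norm of the untruncated single-tree sum; for an $l$-overlapping tree the truncated sum is either the full sum or zero, which trivially gives the same domination. L\'{e}pingle's inequality is then applied only to the $\xi$-variation of the phase-space projection (in Lemma \ref{globallemma}); there is no separate $k$-variation L\'{e}pingle step. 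Also note that the increment $\sum_{P:|I_P|=2^{k-1}} \langle f,\phi_{P_l}\rangle\phi_{P_l}(x)1_{\omega_{P_u}}(\xi)$ is a sum over \emph{bitiles} with lower-tile wave packets and upper-tile indicators, which is not literally the same as the operator $\calA_k$ from the introduction (that one sums over tiles $p$ with $\phi_p$ and $1_{\omega_p}$); this mismatch is harmless for your purposes but should not be glossed over.
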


The Fourier analog 
\begin{equation} \label{fmtvm}
\|\fraS_k[f](\xi,x)\|_{L^p_x(\ell^{\infty}_k(V^r_\xi))} \leq C_{p,r} \|f\|_{L^p}.
\end{equation}
of Theorem \ref{maxtruncvarmodtheorem} can be deduced from the Fourier-analog of \eqref{mainthm} by treating $\fraS_k$ as a superpositioning of modulated versions of $\fraS.$ If \eqref{fmtvm} held for exponents $r < 2$ (it doesn't), then the method of \cite{oberlin11amm} would allow one to deduce bounds of the type \eqref{fourierM2starbound} without using maximal-multiplier estimates such as Lemma 3.1.

The analog of Theorem \ref{maxmodvartrunctheorem} for $\fraS_k$ is related to the Wiener-Wintner theorem for the Hilbert transform \cite{lacey08wwt}, and was obtained \cite{oberlin09vnc} in the restricted range of exponents $r > 2, p > r'$ using the superposition argument.

The superposition argument does not seem to immediately apply to the Walsh-operator $\calS_k$ due to the different method of truncation.

\subsection{Structure of the paper}We give background information on the Walsh-Fourier transform in Section \ref{defsection}. Machinery is developed in Sections \ref{multsection} through \ref{tssection}. The machinery is applied to finish the proofs in Section \ref{poftsection}. Additional refinements needed for Theorem \ref{walshMpvar} are given in Section \ref{varmulsection}.

\subsection{Notational conventions}
We use $|\cdot|$ to Lebesgue measure, cardinality, or an understood norm depending on context. Given a rectangle $P = I \times \omega$ we let $I_P$ denote $I$ and $\omega_P$ denote $\omega.$ The indicator function of a set $E$ is written $1_E.$ Dyadic intervals are half-open on the right, i.e. of the form $[n2^k, (n+1)2^k)$ for integers $n,k.$

\section{Terminology of Walsh-phase-plane analysis} \label{defsection}
Given a nonnegative real number $x$ and an integer $n$, let $d_n(x)$ denote the digit which sits in the $n+1$'th position to the left of the point in the binary expansion of $x$, i.e. 
\[
x = \sum_{n} d_n(x) 2^n
\]
(for points $x$ on the dyadic grid, we choose the expansion with $d_n(x) = 0$ for $-n$ sufficiently large). Define the bitwise addition operation $\oplus$
 \[
d_n(x \oplus y) = d_n(x) + d_n(y)\mod 2, \ \ -\infty < n < \infty
\]
and the corresponding the multiplication operation $\otimes$
\[
d_n(x \otimes y) = \sum_{m} d_{n-m}(x)d_m(y)\mod 2, \ \ -\infty < n < \infty.
\]
Note that Lebesgue measure on $\rea^+$ is invariant under $\oplus$-translation.

Consider the character
\[
e(x) = e^{i \pi  d_{-1}(x)}
\]
and given a function $f$ on $\rea^+$ and $\xi \in \rea^+$ define the Walsh-Fourier transform
\[
\hat{f}(\xi)  = \int_{\rea^+} e(\xi \otimes x) f(x)\ dx.
\]
It is straightforward to verify that $\hat{1}_{[0,1)} = 1_{[0,1)}.$ Thus, after checking the identities $\widehat{f(x \oplus \cdot)} = e(x \otimes \cdot) \hat{f},\ \ \widehat{e^{\xi \otimes \cdot} f} = \hat{f}(\xi \oplus \cdot),$\ \ and $\widehat{f(2^k \cdot)} = 2^{-k}\hat{f}(2^{-k} \cdot),$ one sees that (for linear combinations of characteristic functions of dyadic intervals and hence by density for general functions $f \in L^2$) $\hat{\ }$ is involutive; however, for metaphorical purposes we will sometimes use the notation $\check{\ }$ in place of $\hat{\ }$.  

Given a dyadic ``time-interval'' $I \subset \rea^+$ and a dyadic ``frequency interval'' $\omega \subset \rea^+$, we say that the rectangle $I \times \omega$ is a tile if $|I \times \omega| = 1$ and we say that it is a bitile if $|I \times \omega| = 2.$ 
Each bitile $P$ contains four tiles $P_u, P_l, P_s,$ and $P_d$ which are the upper, lower, left, and right halves respectively. 
We impose the following partial order on the set of tiles and on the set of bitiles: 
\[
I_1 \times \omega_1 \leq I_2 \times \omega_2 \Leftrightarrow \omega_2 \subset \omega_1 \text{\ and\ } I_1 \subset I_2.
\]
A set of bitiles $\P$ is convex if for all bitiles $P_1 \leq P_2 \leq P_3$ with $P_1,P_3 \in \P$ we also have $P_2 \in \P$.
Through the use of standard limiting arguments we can, and will, assume that all bitiles belong to a finite convex set $\P_0$; all constants will be independent of this set.

Associated to each tile is the $L^2$-normalized Walsh wave-packet
\begin{equation} \label{wavepacketdef}
\phi_{I \times \omega}(x) = |I|^{1/2} \check{1}_{\omega}(x \oplus \inf I).
\end{equation}
Since $\phi_{I \times \omega}$ is supported on $I$ and $\hat{\phi}_{I \times \omega}$ is supported on $\omega$, $\phi_p$ and $\phi_{p'}$ are orthogonal unless the tiles $p,p'$ have nonempty intersection.
Letting $*$ denote the convolution operation 
\[
f * g (x) = \int_{\rea^+} f(x \oplus y)g(y)\ dy
\]
and letting $D_k[f](x)$ denote the average of $f$ over the dyadic interval of length $2^k$ containing $x$, we have
\[
D_k[f](x) = f * 2^{-k}1_{[0,2^k)}(x) 
\]
and, more generally, that for any dyadic interval $\omega$ 
\[
(\hat{f} 1_{\omega})\check{\ }(x) = \<f,\phi_{I_x \times \omega}\>\phi_{I_x \times \omega}(x)
\]
where $I_x$ is the dyadic interval of length $|\omega|^{-1}$ containing $x$. 

For each bitile $P$ we have the following relations between the wave-packets for the enclosed tiles
\begin{equation} \label{upperrelation}
\phi_{P_u} = \frac{1}{\sqrt{2}}(\phi_{P_s} - \phi_{P_d})
\end{equation} 
\begin{equation} \label{lowerrelation}
\phi_{P_l} = \frac{1}{\sqrt{2}}(\phi_{P_s} + \phi_{P_d}).
\end{equation}
The relations above can be used to check that our definition \eqref{wavepacketdef} agrees with that in \cite{demeter08twm}.
If a subset $S$ of $\rea^+\times\rea^+$ can be written as the disjoint union of tiles $S = \bigcup_{p \in \bf{p}}p$ we define the phase-plane projection 
\[
\Pi_Sf = \sum_{p \in \bf{p}}\<f,\phi_p\>\phi_p.
\]
When $S$ is a bitile, it follows immediately from \eqref{upperrelation} and \eqref{lowerrelation} that the projection is independent of the cover used in the definition. This is also true for general sets $S$, as can be seen by repeatedly appealing to the special case of the bitile. 
If $S \subset S'$ and $\Pi_S,\Pi_{S'}$ are both defined then 
\begin{equation} \label{projsubsetidentity}
\Pi_S \Pi_{S'} = \Pi_{S'} \Pi_S = \Pi_S.
\end{equation}

\section{Some multiplier estimates} \label{multsection}

In this section we recall an extension of a maximal-multiplier lemma of Bourgain, we recall a multiplier bound of Coifman, Rubio de Francia, and Semmes, and we then prove an estimate which is a hybrid of the two results. 

\subsection{A maximal-multiplier lemma}

Suppose that for every dyadic interval $\omega$ we have a coefficient $a_{\omega} \in \rea$ 
Let $\Xi \subset \rea^+$ be a finite collection of frequencies, and for each integer $k$ consider the Walsh-multiplier  
\[
\calD_k(\xi) = \sum_{\substack{|\omega| = 2^k \\ \omega \cap \Xi \neq \emptyset}} a_{\omega} 1_\omega(\xi)
\]
where, above, we sum over dyadic intervals $\omega$. Building on work of Bourgain \cite{bourgain89pet}, the following estimate was proven in \cite{demeter08twm}

\begin{lemma} \label{varbourgainlemma}
Let $r > 2$, and $\Xi \subset \rea^+.$ Then 
\[
\|\calD_k\|_{M^{2,*}} \leq C_{r}(1 + \log |\Xi|)|\Xi|^{\frac{1}{2} - \frac{1}{r}} \sup_{\xi \in \Xi} \|\sum_{|\omega| = 2^k}  a_{\omega}1_{\omega}(\xi)\|_{V^r_k}.
\]
\end{lemma}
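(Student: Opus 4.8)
The plan is to reduce the maximal-multiplier estimate to a single-scale multiplier estimate by means of a careful decomposition of $\Xi$ into blocks on which the multipliers $\calD_k$ are, up to controllable errors, constant in $k$, and then to pay for the number of such blocks using the $V^r$ hypothesis. First I would fix $g$ with $\|g\|_{L^2} = 1$ and study $(\calD_k \hat g)\check{\ }(x)$ as a function of $k$. Observe that $\calD_k(\xi)$ depends only on the dyadic interval $\omega$ of length $2^k$ containing $\xi$, and that as $k$ decreases the value $\calD_k(\xi)$ jumps at most when $\omega$ crosses a point of $\Xi$; since $|\Xi|$ is finite, for a fixed $\xi$ the sequence $k \mapsto \calD_k(\xi) = \sum_{|\omega| = 2^k} a_\omega 1_\omega(\xi)$ has $V^r_k$-norm bounded by $\sup_{\xi' \in \Xi}\|\sum_{|\omega|=2^k} a_\omega 1_\omega(\xi')\|_{V^r_k}$ (this is essentially the definition, after noting that only $\omega$ meeting $\Xi$ contribute). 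The point is to convert this scalewise variation control into an $\ell^\infty_k$ bound after applying the multipliers to $g$.

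The key step is Bourgain's device: split the range of $k$ into $O(1 + \log|\Xi|)$ dyadic blocks and, within each block, use the $V^r$-control to replace the genuinely $k$-dependent multiplier by a sum of at most $|\Xi|^{1 - 2/r}$ (roughly) ``stopping-time'' multipliers that are each constant in $k$ on the block, with an error whose $\ell^2_k$ square function is controlled by the $r$-variation. Concretely, for each block I would run a stopping-time argument down the frequencies of $\Xi$: partition $\Xi$ into $\lesssim |\Xi|^{1-2/r}$ groups according to how much $\sum_{|\omega|=2^k} a_\omega 1_\omega$ has varied, so that on each group the fluctuation in $k$ is at most $|\Xi|^{-1/2}$ times the total $V^r$ size; summing the $\ell^2$ over the $\lesssim|\Xi|$ frequencies and the $\lesssim|\Xi|^{1-2/r}$ groups yields the factor $|\Xi|^{1/2 - 1/r}$. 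For the ``constant in $k$'' pieces one only needs a single Walsh-multiplier bound, which reduces to the trivial $L^2$ bound since a multiplier that is an indicator (times a bounded coefficient) of a union of dyadic frequency intervals is bounded on $L^2$ by the supremum of the coefficients; the supremum over $k$ is then harmless because there is genuinely no $k$-dependence left on each piece. Finally, summing the $O(1+\log|\Xi|)$ blocks gives the stated logarithmic factor; one factor of $\log|\Xi|$ comes from the number of blocks and, if needed, a second absorbable factor from a Rademacher/randomization or Cauchy–Schwarz step in comparing $\ell^\infty_k$ to $\ell^2_k$.

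The main obstacle I expect is making the stopping-time decomposition genuinely compatible with the Walsh-multiplier structure: the groups into which $\Xi$ is partitioned must themselves be organized so that, for each $k$, the multiplier restricted to a group is still an honest Walsh-multiplier (a coefficient sequence times indicators of dyadic $\omega$'s), so that the trivial $L^2$ bound applies with constant $\sup |a_\omega|$ rather than something worse, and simultaneously the ``error'' between the true $\calD_k$ and its piecewise-constant-in-$k$ approximant must be estimated in $\ell^2_k L^2_x$ using orthogonality of the wave packets across scales $k$. This is exactly the interplay that \cite{bourgain89pet} and \cite{demeter08twm} handle; I would follow their argument, the only change being bookkeeping of the exponent $r$ in place of a fixed exponent, which is what produces the sharp power $|\Xi|^{1/2 - 1/r}$. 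A secondary technical point is justifying the reduction to finitely many $k$ (so that all sups are attained), which follows from the assumed finiteness of the underlying convex set $\P_0$ of bitiles.
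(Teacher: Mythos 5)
The paper does not actually prove Lemma~\ref{varbourgainlemma}: it is cited directly from \cite{demeter08twm} (building on \cite{bourgain89pet}). The closest thing to an in-paper proof is Section~\ref{varmulsection}, where the variational analog Lemma~\ref{Lpvarnocomplemma} is established via the abstract stopping-time estimate Proposition~\ref{varfuncprop} together with the reduction through Corollaries~\ref{wellsepvarbourgain} and~\ref{wellsepvarbourgain2}; that machinery mirrors the argument in \cite{demeter08twm}, so it is the natural point of comparison. Your sketch is broadly in the same spirit -- a Bourgain-type stopping-time argument producing the $|\Xi|^{1/2-1/r}$ factor, plus a separation/scale reduction producing the $\log$ -- so the direction is right.

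That said, the sketch stops short of a proof exactly where the content is. The engine of the argument is an abstract Hilbert-space maximal inequality (Proposition 4.2 of \cite{demeter08twm}; the paper's Proposition~\ref{varfuncprop} is its variational version): if $g:A\to H$ satisfies $|g(a)|\le\delta$ and the Bessel bound $\|\langle g(a),h\rangle\|_{L^2(A)}\le|h|$, then $\|\sup_k|\langle g(a),c_k\rangle|\|_{L^2(A)}\lesssim_r(\delta^2|A|)^{1/2-1/r}\|c_k\|_{V^r}$. You gesture at the stopping-time proof of this (which chains over jump thresholds $\lambda$ and interpolates an $\ell^2$-count of stopping times against a crude $\delta|A|^{1/2}$ bound), but you neither isolate the abstract statement nor specify the data $(A,H,g,c_k,\delta)$ in the Walsh application -- and it is precisely in that setup that the well-separation of $\Xi$ and an averaging device are used to get the Bessel bound with $\delta^2|A|\sim|\Xi|$. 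Your attribution of the square-function error estimate to ``orthogonality of wave packets across scales $k$'' is off: the $\ell^2$ step uses almost-orthogonality over the frequencies $\xi\in\Xi$, not over scales, and there is no Rademacher/randomization step. Your account of the $\log$ is likewise speculative; it comes from the reduction of a general $\Xi$ to a $1$-separated configuration together with the complementary coarse-scale range $k\ge0$ where several $\xi$ share one dyadic $\omega$ (these two pieces correspond to Corollaries~\ref{wellsepvarbourgain} and~\ref{wellsepvarbourgain2}), not from a dyadic partition of $k$ or a Cauchy--Schwarz from $\ell^\infty_k$ to $\ell^2_k$. The Walsh-compatibility worry you raise at the end is real but turns out to be harmless, since every intermediate multiplier in the decomposition is already a sum of $a_\omega 1_\omega$ over dyadic $\omega$.
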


In \cite{demeter09osm} and \cite{nazarov10czd} the Fourier-multiplier version of the estimate above was extended to $L^q$ for $1 < q < 2.$ Following the Walsh-analog of the argument in \cite{nazarov10czd} (which is part of the proof of Lemma \ref{hybridintervallemma} below) one would obtain
\begin{lemma} \label{Lpmaxnocomplemma}
Let $r > 2, 1 < q < 2,$ and $\Xi \subset \rea^+.$ Then 
\[
\|\calD_k\|_{M^{q,*}} \leq C_{q,r} (1 + \log|\Xi|)|\Xi|^{\frac{1}{q} - \frac{1}{r}} \sup_{\xi \in \Xi} \|\sum_{|\omega| = 2^k } a_{\omega}1_{\omega}(\xi)\|_{V^r_k}.
\]
\end{lemma}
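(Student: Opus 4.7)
The plan is to carry out the Walsh analog of the argument in \cite{nazarov10czd}, which derives an $L^q$ maximal-multiplier bound from the $L^2$ Bourgain bound (Lemma \ref{varbourgainlemma}) together with the Coifman-Rubio de Francia-Semmes type $V^r\to M^q$ inequality \eqref{crsWalshbound}. After rescaling we may assume
\[
\sup_{\xi\in\Xi}\Big\|\sum_{|\omega|=2^k}a_\omega 1_\omega(\xi)\Big\|_{V^r_k}=1,
\]
so that it suffices to show $\|\calD_k\|_{M^{q,*}}\lesssim (1+\log|\Xi|)|\Xi|^{1/q-1/r}$.

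I would begin with a jump decomposition of the $k$-sequence. Fix an auxiliary exponent $\tilde r>r$ close to $r$ with $|1/q-1/2|<1/\tilde r$, so that both Lemma \ref{varbourgainlemma} and \eqref{crsWalshbound} apply with $\tilde r$ in place of $r$. For each $\xi\in\Xi$ and each $n\geq 0$ the set $K_n(\xi)$ of indices $k$ with $|\calD_k(\xi)-\calD_{k-1}(\xi)|\in[2^{-n-1},2^{-n})$ has cardinality $\lesssim 2^{rn}$ by the normalization. Writing $\calD_k=\sum_{n\geq 0}\calD^{(n)}_k$ according to the scale of its jumps, each piece satisfies the uniform-in-$\xi$ bound $\|\calD^{(n)}_k(\xi)\|_{V^{\tilde r}_k}\lesssim 2^{-n(1-r/\tilde r)}$.

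The two estimates I would then put on each $\calD^{(n)}$ are: first, Lemma \ref{varbourgainlemma} applied with exponent $\tilde r$ gives
\[
\|\calD^{(n)}_k\|_{M^{2,*}}\lesssim(1+\log|\Xi|)|\Xi|^{1/2-1/\tilde r}\cdot 2^{-n(1-r/\tilde r)};
\]
second, for each fixed $k$ the function $\xi\mapsto\calD^{(n)}_k(\xi)$ is supported on a union of at most $|\Xi|$ dyadic intervals of length $2^k$ and takes values of size $O(2^{-n})$, so its $\xi$-variation is $\lesssim|\Xi|^{1/\tilde r}2^{-n}$, and \eqref{crsWalshbound} yields the pointwise-in-$k$ bound $\|\calD^{(n)}_k\|_{M^q}\lesssim|\Xi|^{1/\tilde r}2^{-n}$. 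Real interpolation of these two estimates (in the test function $g$) produces an $M^{q,*}$ bound for $\calD^{(n)}$ whose $|\Xi|$-exponent is $1/q-1/\tilde r$ while the decay in $n$ remains geometric. Summing in $n$ and letting $\tilde r\to r$ recovers the target exponent $1/q-1/r$.

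The principal obstacle I anticipate is preventing the factor $(1+\log|\Xi|)$ from being inflated by the logarithmically many scales $n\lesssim\log|\Xi|$ that dominate the summation. Following \cite{nazarov10czd}, this is resolved by bifurcating the decomposition at a threshold $n_0\sim\log|\Xi|$: for $n<n_0$ one invokes the interpolated estimate above with its single logarithm, while for $n\geq n_0$ the pointwise CRS-based bound (which carries no logarithm) is already strong enough to sum geometrically. Combining the two regimes delivers the stated estimate.
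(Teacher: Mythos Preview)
There is a genuine gap in your interpolation step, and the strategy you describe is not in fact the one in \cite{nazarov10czd}. You propose to combine an $M^{2,*}$ bound for $\calD^{(n)}_k$ (a maximal-over-$k$ estimate) with a pointwise-in-$k$ $M^q$ bound coming from \eqref{crsWalshbound}, and claim that ``real interpolation in the test function $g$'' produces an $M^{q,*}$ bound. But these two estimates control different operators: the first bounds $g\mapsto \sup_k |(\calD^{(n)}_k\hat g)\check{\ }|$, while the second bounds only $g\mapsto (\calD^{(n)}_{k_0}\hat g)\check{\ }$ for each \emph{fixed} $k_0$. Interpolation (Marcinkiewicz or complex) requires both endpoints to bound the same sublinear operator; a non-maximal bound at one endpoint cannot be upgraded to a maximal bound merely by interpolating against a maximal bound at another exponent. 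If you try to force a maximal $L^q$ endpoint by summing over the indices $k$ at which the multiplier-valued sequence $k\mapsto\calD^{(n)}_k$ changes, note that the jump sets $K_n(\xi)$ depend on $\xi$, so the number of such $k$'s is $\lesssim |\Xi|\cdot 2^{rn}$; this destroys both the target $|\Xi|$-exponent and the geometric summability in $n$.

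The paper's route---which \emph{is} the Walsh analog of \cite{nazarov10czd}---is entirely different and involves no jump decomposition. One proves a weak-type $(1,1)$ bound for the full maximal operator $g\mapsto\sup_k|(\calD_k\hat g)\check{\ }|$ directly, via a multiple-frequency Calder\'on--Zygmund decomposition adapted to $\Xi$ (this is carried out inside the proof of Lemma~\ref{hybridintervallemma}; specialize there to $\Upsilon$ consisting of a single long interval). The good function $\mfg$ is handled by the known $L^2$ maximal bound, Lemma~\ref{varbourgainlemma}. The bad function $\mfb$ is built so that on each Calder\'on--Zygmund interval $I$ it is orthogonal to every wave-packet $\phi_{I\times\omega}$ with $\omega\cap\Xi\neq\emptyset$; this cancellation forces each $(\calD_k\hat{\mfb})\check{\ }$ to be supported on the exceptional set, whose measure is already $\lesssim |\Xi|^{1/2}B\|g\|_{L^1}/\lambda$. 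Marcinkiewicz interpolation between this weak $(1,1)$ and the strong $(2,2)$ from Lemma~\ref{varbourgainlemma} then yields the $M^{q,*}$ bound with the stated exponent $\frac{1}{q}-\frac{1}{r}$.
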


We will need to use an estimate which encompasses both Lemma \ref{Lpmaxnocomplemma} and a separate multiplier bound which we will now discuss.

\subsection{Multipliers of bounded $r$-variation}

It was shown in \cite{coifman88mdf}, see also \cite{tao01emt}, that if $m$ is a function of bounded $r$-variation then the associated Fourier-multiplier operator is bounded on $L^q$ when $|\frac{1}{q} - \frac{1}{2}| < \frac{1}{r}$ and $1 < q < \infty$. 
The Walsh-multiplier analog of this result can be proven by the same method, which we now outline.

The first step is to obtain an estimate for multipliers given by linear combinations of characteristic functions of intervals.

\begin{lemma} \label{intervalmultiplierlemma}
Let $\epsilon > 0$, let $\Upsilon$ be a finite collection of pairwise disjoint subintervals of $\rea^+$, and let $\{b_{\upsilon}\}_{\upsilon \in \Upsilon} \subset \rea$ be a collection of coefficients. Then for $1 < q < \infty$ 
\[
\|\sum_{\upsilon \in \Upsilon}b_{\upsilon}1_{\upsilon}\|_{M^q} \leq C_{q,\epsilon} |\Upsilon|^{|\frac{1}{q} - \frac{1}{2}| + \epsilon}\sup_{\upsilon \in \Upsilon} |b_{\upsilon}|.
\]
\end{lemma}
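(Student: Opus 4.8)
The plan is to reduce to the case of a single interval (for which $\|1_\upsilon\|_{M^q} \leq C$ uniformly, by Walsh analogs of the one-dimensional multiplier estimates, or directly from the fact that $(\hat{g}1_\upsilon)\check{\ }$ is a bounded operator on $L^q$ for a fixed dyadic-type interval) and then interpolate after a crude summation. More precisely, first I would handle $q = 2$: there Parseval gives $\|\sum_\upsilon b_\upsilon 1_\upsilon\|_{M^2} = \|\sum_\upsilon b_\upsilon 1_\upsilon\|_{L^\infty} \leq \sup_\upsilon |b_\upsilon|$, which matches the claimed bound with exponent $0 = |\frac12 - \frac12|$. The work is therefore at $q \neq 2$, and by duality (the $M^q$ norm of a real multiplier equals its $M^{q'}$ norm) it suffices to treat $1 < q < 2$.

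For $1 < q < 2$, the key step is to show $\|1_\upsilon\|_{M^q} \leq C_q$ with a constant independent of the interval $\upsilon$. Since $\upsilon$ is an arbitrary subinterval of $\rea^+$, not a dyadic interval, I would decompose $1_\upsilon$ as a sum of indicators of dyadic intervals in the standard Whitney/Calderón–Zygmund fashion: $\upsilon$ is a union of $O(1)$ maximal dyadic intervals from each ``scale of approach'' to its two endpoints, so $1_\upsilon = \sum_j (1_{\omega_j^-} + 1_{\omega_j^+})$ where the $\omega_j^\pm$ are dyadic. The operator with multiplier $1_{\omega}$ for dyadic $\omega$ is, by the phase-plane projection identity $(\hat f 1_\omega)\check{\ }(x) = \<f,\phi_{I_x\times\omega}\>\phi_{I_x\times\omega}(x)$, essentially a conditional expectation composed with a unimodular modulation, hence bounded on $L^q$ uniformly in $\omega$; summing the geometrically-organized pieces against a square-function / Littlewood–Paley estimate (the Walsh Littlewood–Paley inequality is clean, with exact orthogonality) gives $\|1_\upsilon\|_{M^q} \leq C_q$. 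Then the triangle inequality gives the trivial bound $\|\sum_\upsilon b_\upsilon 1_\upsilon\|_{M^q} \leq |\Upsilon|\, C_q \sup_\upsilon |b_\upsilon|$, i.e.\ the estimate with exponent $1$ in place of $|\frac1q - \frac12| + \epsilon$.

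Finally I would interpolate between the exponent $0$ bound at $q = 2$ and the exponent $1$ bound at the endpoints $q \to 1$ (and $q \to \infty$ by duality). Fixing the coefficients' common bound to be $1$ and viewing $q \mapsto \|\sum_\upsilon b_\upsilon 1_\upsilon\|_{M^q}^{-1}$-type quantities through complex interpolation of the multiplier operator on a fixed analytic family—or more simply, applying the Riesz–Thorin interpolation of $L^q$ operator norms with $\theta$ chosen so that $\frac1q = \frac{1-\theta}{2} + \frac{\theta}{q_0}$ with $q_0$ close to $1$—yields a bound $|\Upsilon|^{\theta}$ with $\theta$ as close as we like to $2|\frac1q - \frac12|$; tracking the loss in $q_0 \to 1$ produces the $|\Upsilon|^{|\frac1q-\frac12|+\epsilon}$ (a worse-but-sufficient exponent follows even more cheaply, and the $\epsilon$ absorbs the gap). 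The main obstacle I anticipate is the uniform-in-$\upsilon$ bound $\|1_\upsilon\|_{M^q}\le C_q$ for non-dyadic $\upsilon$: one must argue that the Walsh multiplier associated to an arbitrary interval—assembled from its dyadic pieces—remains $L^q$-bounded, which is where the Walsh analog of the classical fact ``indicators of intervals are $L^q$ Fourier multipliers'' (a consequence of $L^q$-boundedness of a suitable square function, or of the boundedness of ``Walsh–Hilbert''-type projections) has to be invoked; everything else is soft summation and interpolation.
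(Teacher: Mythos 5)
The interpolation step fails to give the claimed exponent. Riesz--Thorin between $\|m\|_{M^2} \le \sup_{\upsilon} |b_{\upsilon}|$ at $q=2$ and the triangle-inequality bound $\|m\|_{M^{q_0}} \le C_{q_0} |\Upsilon| \sup_{\upsilon} |b_{\upsilon}|$ at $q_0$ near $1$, with $\frac1q = \frac{1-\theta}{2} + \frac{\theta}{q_0}$, gives $|\Upsilon|^{\theta}$ where $\theta = (\frac1q - \frac12)/(\frac1{q_0} - \frac12)$. As $q_0 \to 1$ this tends to $2(\frac1q - \frac12)$, and the blow-up of $C_{q_0}$ only makes matters worse. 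But $2|\frac1q - \frac12| > |\frac1q - \frac12| + \epsilon$ whenever $|\frac1q - \frac12| > \epsilon$, so the $\epsilon$ does \emph{not} absorb the gap; your exponent is larger by essentially a factor of $2$ for $q$ away from $2$. This loss matters downstream: the geometric series over $j$ in the proof of Lemma \ref{Walshcrs} converges only when the exponent on $|\Upsilon|$ is strictly less than $\frac1r$, and your bound would force $2|\frac1q - \frac12| < \frac1r$ instead of $|\frac1q - \frac12| + \epsilon < \frac1r$, roughly halving the admissible range of $q$ and thereby shrinking the range of $(p,q)$ reachable in Theorem \ref{walshMpstar}.

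The underlying issue is that the endpoint bound $|\Upsilon|^{1}$ near $q=1$ is too crude -- the correct endpoint behaviour is $|\Upsilon|^{1/2+o(1)}$, and no interpolation against the trivial $L^2$ estimate can manufacture that square-root savings; it must be established at the endpoint by a genuine multi-frequency argument. The paper does this by deducing Lemma \ref{intervalmultiplierlemma} directly from Lemma \ref{hybridintervallemma} (take $\Xi = \{0\}$ and $a_{\omega} \equiv 1$); that lemma is proven by a multiple-frequency Calder\'on--Zygmund decomposition in which the endpoints of the $\upsilon \in \Upsilon$ are adjoined to the frequency set $\Lambda$, yielding a weak-type $(1,1)$ bound with a factor $(|\Xi|+|\Upsilon|)^{1/2}$ (times a logarithm) which, interpolated against $L^2$, produces the exponent $\frac1q - \frac1r$. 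The paper also notes one could instead prove a Walsh analogue of the Rubio de Francia square-function estimate, or use \eqref{mainthm} with $r$ near $2$ in its place. Your preliminary reductions -- duality to $1<q<2$, the identity $\|m\|_{M^2} = \|m\|_{L^\infty}$, and the uniform $L^q$-boundedness of a single-interval multiplier via its dyadic Whitney pieces -- are all sound; it is specifically the interpolation endpoint that breaks.
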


\begin{proof}
In \cite{coifman88mdf}, the Fourier-multiplier version of this lemma was proven through the use of the Rubio de Francia square function estimate. One could follow the same route here by proving a Walsh-analog of the square function estimate, or by instead using \eqref{mainthm} with $r$ close to $2$ as a substitute for the square-function estimate. We will instead appeal to an estimate below using a multiple frequency Calder\'{o}n-Zygmund decomposition (this bears some similarity to the approach in \cite{tao01emt}). By duality one may assume $1 < q < 2.$ Choosing $r > 2$ and $\epsilon' > 0$ sufficiently small we have $\frac{1}{q} - \frac{1}{r} + \epsilon' < \frac{1}{q} - \frac{1}{2} + \epsilon$. Taking $\Xi = \{0\}$ and $a_{\omega} = 1$ for every $\omega$ we then have 
\[
(\sum_{\upsilon \in \Upsilon}b_{\upsilon}1_{\upsilon}\hat{g})\check{\ }(x) = \lim_{k \rightarrow \infty} (\calD_k \sum_{\upsilon \in \Upsilon}b_{\upsilon}1_{\upsilon}\hat{g})\check{\ }(x)
\] 
almost everywhere and so the lemma follows immediately from Lemma \ref{hybridintervallemma}.

\end{proof}

Next, we see (in a lemma directly from \cite{coifman88mdf}) that functions of bounded $r$-variation can efficiently be written as sums of functions of the type treated in Lemma \ref{intervalmultiplierlemma}. 

\begin{lemma} \label{intervaldecompositionlemma}
Let $m$ be a compactly supported function on $\rea^+$ of bounded $r$-variation for some $1 \leq r < \infty.$ Then for each integer $j \geq 0$, one can find a collection $\Upsilon_j$ of pairwise disjoint subintervals of $\rea^+$ and coefficients $\{b_{\upsilon}\}_{\upsilon \in \Upsilon_j} \subset \rea$ so that $|\Upsilon_j| \leq 2^j$, $|b_{\upsilon}| \leq \|m\|_{V_r} 2^{-j/r}$, and
\[
m = \sum_{j \geq 0} \sum_{\upsilon \in \Upsilon_j} b_{\upsilon} 1_{\upsilon} 
\]
where the sum in $j$ converges uniformly.
\end{lemma}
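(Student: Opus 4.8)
The plan is to perform a greedy stopping-time/layer-cake decomposition of $m$ according to the size of its variation increments, at dyadic scales $2^{-j/r}$. Concretely, I would proceed inductively on $j$. Set $m_0 = m$. At stage $j$, having produced a current remainder $m_j$ (which will be constant off finitely many intervals and whose total $r$-variation is controlled by $\|m\|_{V^r}$), I would look for a maximal collection of disjoint intervals on each of which $m_j$ oscillates by more than $\|m\|_{V^r}2^{-j/r}$; I select a representative oscillation value $b_\upsilon$ on each such interval $\upsilon$, form $\sum_{\upsilon\in\Upsilon_j}b_\upsilon 1_\upsilon$, and set $m_{j+1}=m_j-\sum_{\upsilon\in\Upsilon_j}b_\upsilon 1_\upsilon$. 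More carefully, since $m$ is only of bounded $r$-variation (not monotone), the right object to iterate is something like: partition the support of $m$ into the maximal number of consecutive intervals on which the oscillation of $m$ is at least $\|m\|_{V^r}2^{-j/r}$, approximate $m$ by the two-valued (endpoint) data on those intervals, and let $\Upsilon_j$ record the jumps of this approximation from scale $j$ to scale $j+1$.

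The key steps in order: (1) Define the approximants $m^{(j)}$ to $m$ at ``resolution $2^{-j/r}$'' and check that $m^{(j)}$ is a step function taking at most $2^j$ values (this is where the variation bound enters: if $m$ had $N$ disjoint intervals of oscillation $\geq \|m\|_{V^r}2^{-j/r}$ then $N(\|m\|_{V^r}2^{-j/r})^r \le \|m\|_{V^r}^r$, so $N \le 2^j$). (2) Set $b$-data so that $m^{(j+1)}-m^{(j)} = \sum_{\upsilon\in\Upsilon_{j}}b_\upsilon 1_\upsilon$ with $\Upsilon_j$ a refinement partition; since each new jump is between consecutive resolution levels, $|b_\upsilon|\lesssim \|m\|_{V^r}2^{-j/r}$, and $|\Upsilon_j|\le |\{\text{intervals at level }j+1\}|\le 2^{j+1}$ (absorb the constant $2^{1/r}$ into a harmless reindexing, or just accept $|\Upsilon_j|\le 2^{j+1}$ and relabel, or sharpen the counting to get the clean $2^j$). (3) Telescope: $m = m^{(0)} + \sum_{j\ge 0}(m^{(j+1)}-m^{(j)})$, noting $m^{(0)}$ is itself a single interval term of the allowed type (absorbed into $\Upsilon_0$), and verify uniform convergence, which follows because $\|m - m^{(j)}\|_{L^\infty}\le \|m\|_{V^r}2^{-j/r}\to 0$: at resolution $2^{-j/r}$ the error of the step approximation is at most one increment, bounded by $\|m\|_{V^r}2^{-j/r}$. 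Compact support of $m$ guarantees all the $\Upsilon_j$ consist of finitely many intervals lying in a fixed bounded set.

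The main obstacle is making the ``resolution-$2^{-j/r}$ approximation'' precise for a general (non-monotone, possibly wildly oscillating but bounded-$r$-variation) function in a way that simultaneously (a) gives the $2^j$ bound on the number of pieces at scale $j$ from the single inequality $\sum|\text{increments}|^r\le \|m\|_{V^r}^r$, (b) makes the successive differences $m^{(j+1)}-m^{(j)}$ genuinely a sum of indicators of \emph{disjoint} intervals with the stated coefficient bound, and (c) makes the bookkeeping honest about the additive role of $\|m\|_{L^\infty}$ inside $\|m\|_{V^r}$. The cleanest route is to define, for a threshold $\lambda>0$, the stopping times $0=t_0<t_1<\cdots$ where $t_{n}$ is the least point past $t_{n-1}$ at which $m$ has moved by $\lambda$ from $m(t_{n-1})$, let $P_\lambda m$ be the function equal to $m(t_{n-1})$ on $[t_{n-1},t_n)$, take $\lambda = \lambda_j := \|m\|_{V^r}2^{-j/r}$, and set $m^{(j)}=P_{\lambda_j}m$; then both $|\{t_n\}|\le 2^j+O(1)$ and $\|m-P_{\lambda_j}m\|_\infty\le\lambda_j$ are immediate, and $m^{(j+1)}-m^{(j)}$ is piecewise constant on the common refinement of the two partitions, hence a sum of at most $O(2^j)$ disjoint-interval indicators with coefficients $\le \|m^{(j)}-m\|_\infty+\|m^{(j+1)}-m\|_\infty\le 2\lambda_j$. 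Absorbing the various $O(1)$ and constant factors by a trivial reindexing $j\mapsto j+c$ yields exactly the stated bounds $|\Upsilon_j|\le 2^j$ and $|b_\upsilon|\le\|m\|_{V^r}2^{-j/r}$.
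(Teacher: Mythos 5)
Your stopping-time idea is a plausible route, but the final ``absorb constants by reindexing $j\mapsto j+c$'' step does not go through, and this is a real gap rather than a cosmetic one. With your construction (in the nested version, counting only new jump-intervals so that $m^{(j+1)}-m^{(j)}$ is supported on them), you get roughly $2^{j+1}$ pieces at stage $j$ with coefficients bounded by about $\|m\|_{V^r}2^{-j/r}$. Shifting the index upward to repair the cardinality ($2^{j+1}\le 2^{j+c}$ forces $c\ge 1$) simultaneously \emph{tightens} the required coefficient bound to $\|m\|_{V^r}2^{-(j+c)/r}$, which is a factor $2^{c/r}$ \emph{smaller} than what you have; shifting downward loosens the coefficient requirement but tightens the cardinality requirement. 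No choice of $c$ satisfies both, because the two bounds move in opposite directions. You flag this yourself (``or sharpen the counting to get the clean $2^j$'') without actually doing it, and the non-nested version is strictly worse (common refinement gives $\approx 3\cdot 2^j$ pieces).

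The device the paper uses to get the clean constants is the cumulative variation function $V(x)=\sup_{0=\xi_0<\cdots<\xi_N=x}\sum |m(\xi_k)-m(\xi_{k-1})|^r$. Since $V$ is nondecreasing with range contained in $[0,\|m\|_{V^r}^r]$, one defines $\upsilon_{j,l}=V^{-1}\bigl([(l-1)2^{-j}\|m\|_{V^r}^r,\,l\,2^{-j}\|m\|_{V^r}^r)\bigr)$; this produces \emph{exactly} $2^j$ (possibly degenerate) intervals at level $j$, nested as $j$ increases, with oscillation of $m$ on each piece at most $2^{-j/r}\|m\|_{V^r}$ because $|m(y)-m(x)|^r\le V(y)-V(x)$. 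Taking $\tilde b_\upsilon$ to be an average (or endpoint value) of $m$ on $\upsilon$ and $b_\upsilon$ the difference against the parent interval's value then gives both the cardinality and coefficient bounds with no slack to give away. So your decomposition-in-the-domain should be replaced by this decomposition-in-the-range-of-$V$. (For the downstream application in Lemma~\ref{Walshcrs}, a lossy version with extra multiplicative constants would actually suffice, since the $j$-sum converges geometrically; but it does not prove the lemma as literally stated.)
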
 

\begin{proof}
The proof is exactly as in \cite{coifman88mdf} (also see \cite{lacey07irt}) so we will be short with the details. Choose $B$ so that $m$ is supported on $[0,B].$ Set $V(0) = 0$ and for each $x \in (0,B]$ let
\[
V(x) = \sup_{N,0 = \xi_0 < \ldots < \xi_N = x} \sum_{k = 1}^N |m(\xi_{k}) - m(\xi_{k-1})|^r.
\]
For $j \geq 0$ and $1 \leq l < 2^j$ let 
\[
\upsilon_{j,l} = V^{-1}\left([(l-1) 2^{-j} \|m\|_{V^r}^r, l 2^{-j} \|m\|_{V^r}^r )\right)
\]
and 
\[
\upsilon_{j,2^j} = V^{-1}\left([(2^j-1) 2^{-j} \|m\|_{V^r}^r, \|m\|_{V^r}^r ]\right).
\]
Set $\tilde{b}_{\upsilon_{j,l}} = 0$ if $\upsilon_{j,l} = \emptyset$, $\tilde{b}_{\upsilon_{j,l}} = m(\upsilon_{j,l})$ if $\upsilon_{j,l}$ is a singleton set, and 
\[
\tilde{b}_{\upsilon_{j,l}} = \frac{1}{|\upsilon_{j,l}|} \int_{\upsilon_{j,l}} m(x)\ dx
\]
if $\upsilon_{j,l}$ is a non-singleton interval. 
Then letting $\Upsilon_{j} = \{\upsilon_{j,l}\}_{l=1}^{2^j}$, $b_{\upsilon_{0,l}} = \tilde{b}_{\upsilon_{0,l}}$, and $b_{\upsilon_{j,l}} = \tilde{b}_{\upsilon_{j,l}} - \tilde{b}_{\upsilon_{j - 1,l}}$ for $j > 0$, one sees that the requirements of the lemma are satisfied.
\end{proof}

Finally we combine
Lemmas \ref{intervalmultiplierlemma} and \ref{intervaldecompositionlemma} to obtain the Walsh-analog of a result from \cite{coifman88mdf}\footnote{Strictly speaking, in \cite{coifman88mdf} they considered the case where $\Upsilon$ was a collection of dyadic shells and through an additional Littlewood-Paley argument were able to obtain a norm bound which did not blow up with $|\Upsilon|.$ To match our application, we are more flexible with $\Upsilon$ and can accept the resulting loss in the bound.}
\begin{lemma} \label{Walshcrs}
Let $1 < q < \infty$, $|\frac{1}{q} - \frac{1}{2}| < \frac{1}{r}$, and $\epsilon > 0$. 
Suppose 
$\Upsilon$ is a collection of pairwise disjoint subintervals of $\rea^+$ and that for each $\upsilon \in \Upsilon$, $m_{\upsilon}$ is a function supported on $\upsilon$ with $ \|m\|_{V^r} < \infty$. Then
\[
\|\sum_{\upsilon \in \Upsilon}m_{\upsilon}\|_{M^q} \leq C_{q,r,\epsilon} |\Upsilon|^{|\frac{1}{q} - \frac{1}{2}| + \epsilon} \sup_{\upsilon \in \Upsilon}\|m_{\upsilon}\|_{V^r}.
\]
\end{lemma}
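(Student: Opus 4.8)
The plan is to combine the two preceding lemmas in the obvious way, applying Lemma \ref{intervaldecompositionlemma} to each $m_{\upsilon}$ separately and then Lemma \ref{intervalmultiplierlemma} to the resulting collections of interval-multipliers, organized by the dyadic scale $j$ of the decomposition. The only point requiring care is bookkeeping of the cardinalities: after decomposing every $m_\upsilon$ at level $j$ we have produced at most $2^j |\Upsilon|$ intervals, each carrying a coefficient of size at most $\sup_\upsilon \|m_\upsilon\|_{V^r} 2^{-j/r}$, and we must sum the contributions over $j \geq 0$.

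First I would fix $\epsilon' > 0$ with $\epsilon' < \epsilon$ and also small enough that $|\tfrac1q - \tfrac12| + \epsilon' < \tfrac1r$; this is possible since $|\tfrac1q-\tfrac12| < \tfrac1r$ by hypothesis. Next, since $M^q$ is a norm (it is an operator norm on $L^q$) it suffices to bound $\|\sum_{\upsilon} m_\upsilon\|_{M^q}$ by the sum over $j$ of $\|\sum_\upsilon \sum_{\upsilon' \in \Upsilon_j(\upsilon)} b_{\upsilon'} 1_{\upsilon'}\|_{M^q}$, where $\Upsilon_j(\upsilon)$ and $\{b_{\upsilon'}\}$ are the data furnished by Lemma \ref{intervaldecompositionlemma} applied to $m_\upsilon$ (after translating $m_\upsilon$ so as to be compactly supported — it already is, being supported on the bounded interval $\upsilon$). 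Here I should note that the intervals in $\bigcup_{\upsilon} \Upsilon_j(\upsilon)$ are pairwise disjoint: those coming from a single $\upsilon$ are disjoint by the lemma, and those from different $\upsilon,\upsilon'$ are disjoint because $\upsilon \cap \upsilon' = \emptyset$ and $\Upsilon_j(\upsilon)$ covers only a subset of $\upsilon$. Thus Lemma \ref{intervalmultiplierlemma} applies to the level-$j$ piece with a collection of at most $2^j|\Upsilon|$ intervals and coefficients bounded by $\sup_\upsilon\|m_\upsilon\|_{V^r}2^{-j/r}$, giving a bound
\[
C_{q,\epsilon'}\, (2^j |\Upsilon|)^{|\frac1q - \frac12| + \epsilon'}\, 2^{-j/r}\, \sup_\upsilon \|m_\upsilon\|_{V^r}.
\]

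Summing this over $j \geq 0$, the geometric series converges precisely because the exponent $|\tfrac1q-\tfrac12| + \epsilon' - \tfrac1r$ on $2^j$ is negative by our choice of $\epsilon'$, yielding the total bound $C_{q,r,\epsilon'} |\Upsilon|^{|\frac1q - \frac12| + \epsilon'} \sup_\upsilon \|m_\upsilon\|_{V^r}$, which is what we want after renaming $\epsilon'$ as (a quantity at most) $\epsilon$. A minor loose end is justifying that passing to the norm of the infinite $j$-sum is legitimate: Lemma \ref{intervaldecompositionlemma} guarantees uniform convergence of $\sum_j \sum_{\upsilon'\in\Upsilon_j(\upsilon)} b_{\upsilon'}1_{\upsilon'}$ to $m_\upsilon$, and since there are only finitely many $\upsilon$ and each $m_\upsilon$ is bounded with bounded support, one gets a dominated-convergence argument at the level of $L^q$ that identifies the multiplier of the sum with the sum of the multipliers; the triangle inequality for $\|\cdot\|_{M^q}$ then does the rest. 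There is no substantive obstacle here — the content is entirely in Lemmas \ref{intervalmultiplierlemma} and \ref{intervaldecompositionlemma} — so the ``hard part,'' such as it is, is merely arranging the exponents so the sum in $j$ converges, which is exactly the constraint $|\tfrac1q - \tfrac12| < \tfrac1r$.
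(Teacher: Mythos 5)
Your proposal is correct and follows essentially the same route as the paper's proof: apply Lemma \ref{intervaldecompositionlemma} to each $m_\upsilon$, group the resulting intervals by dyadic level $j$, apply Lemma \ref{intervalmultiplierlemma} at each level to the disjoint union $\bigcup_\upsilon \calI_{\upsilon,j}$ (at most $2^j|\Upsilon|$ intervals, coefficients bounded by $2^{-j/r}\sup_\upsilon\|m_\upsilon\|_{V^r}$), and sum the geometric series in $j$ after shrinking $\epsilon$ so that $|\tfrac1q-\tfrac12|+\epsilon < \tfrac1r$. The only small point the paper handles slightly differently is that it explicitly invokes a limiting argument to reduce to the case where every $\upsilon\in\Upsilon$ has finite length before applying Lemma \ref{intervaldecompositionlemma} (which requires compact support), whereas you assume the $\upsilon$ are bounded from the outset; otherwise the arguments coincide.
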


\begin{proof}

After a limiting argument, one may assume that all intervals in $\Upsilon$ have finite length. Applying Lemma \ref{intervaldecompositionlemma} to each $m_{\upsilon}$ we obtain for $j \geq 0$ a collection $\calI_{\upsilon,j}$ of at most $2^j$ pairwise disjoint subintervals of $\upsilon$ and coefficients $\{b_I\}_{I \in \calI_{\upsilon,j}}$ so that 
\[
m_{\upsilon} = \sum_{j \geq 0} \sum_{I \in \calI_{\upsilon,j}} b_{I} 1_I.
\]
Then
\[
\|\sum_{\upsilon \in \Upsilon} m_{\upsilon} \|_{M^q} \leq \sum_{j \geq 0} \|\sum_{\upsilon \in \Upsilon} \sum_{I \in \calI_{\upsilon,j}}b_I 1_I \|_{M^q}.
\]
Applying Lemma \ref{intervalmultiplierlemma} with the collection of pairwise disjoint intervals $\bigcup_{\upsilon \in \Upsilon} \calI_{\upsilon,j}$ we see that each term on the right above is 
\begin{align*}
&\leq C_{q,\epsilon}  (2^j|\Upsilon|)^{|\frac{1}{q} - \frac{1}{2}| + \epsilon} \sup_{\upsilon \in \Upsilon, I \in \calI_{\upsilon,j}} |b_{I}| \\
&\leq C_{q,\epsilon} (2^j|\Upsilon|)^{|\frac{1}{q} - \frac{1}{2}| + \epsilon} 2^{-\frac{j}{r}} \sup_{\upsilon \in \Upsilon} \|m_{\upsilon}\|_{V^r}. 
\end{align*}
The sum over $j \geq 0$ converges after possibly shrinking $\epsilon$ to satisfy $|\frac{1}{q} - \frac{1}{2}| + \epsilon < \frac{1}{r}.$
\end{proof}

\subsection{A hybrid estimate}

Our aim here is to prove the following lemma which, except for a restriction on the range of $r$ and a difference in the dependence on $|\Upsilon|$, is a common extension of Lemmas \ref{Lpmaxnocomplemma} and \ref{Walshcrs}.

\begin{lemma} \label{hybridlemma}
Let $1 < q \leq 2$, $2 < r < 2q$, $\epsilon > 0,$ and $\Xi \subset \rea^+.$ Suppose that $\Upsilon$ is a collection of pairwise disjoint subintervals of $\rea^+$ and that $\{m_{\upsilon}\}_{\upsilon \in \Upsilon}$ is a collection of functions of bounded $r$-variation such that each $m_{\upsilon}$ is supported on $\upsilon.$ Then
\[
\|\calD_k \sum_{\upsilon \in \Upsilon} m_{\upsilon}\|_{M^{q,*}} \leq C_{q,r,\epsilon}  (|\Xi| + |\Upsilon|)^{\frac{1}{q} - \frac{1}{r} + \epsilon} \sup_{\xi \in \Xi}  \|\sum_{|\omega| = 2^k } a_{\omega}1_{\omega}(\xi)\|_{V^r_k} \sup_{\upsilon \in \Upsilon} \|m_{\upsilon}\|_{V^r}.
\]
\end{lemma}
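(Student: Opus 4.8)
The plan is to interpolate-in-spirit between the two regimes already in hand: the multiple-frequency side (controlled by $|\Xi|$, coming from the Calder\'on--Zygmund / Bourgain-type machinery underlying Lemma \ref{Lpmaxnocomplemma}) and the bounded-variation-multiplier side (controlled by $|\Upsilon|$, coming from Lemma \ref{Walshcrs}). First I would reduce to the case where all $m_\upsilon$ have finite length and are step functions, via the uniform-convergence limiting argument, and then invoke Lemma \ref{intervaldecompositionlemma} to write each $m_\upsilon = \sum_{j\ge 0}\sum_{I\in\calI_{\upsilon,j}} b_I 1_I$ with $|\calI_{\upsilon,j}|\le 2^j$, $|b_I|\le \|m_\upsilon\|_{V^r}2^{-j/r}$, and the $j$-sum converging uniformly (so it may be pulled outside the $M^{q,*}$ norm by the triangle inequality). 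This dyadic decomposition is what allows the $|\Upsilon|$-dependence to be traded against $r$ just as in Lemma \ref{Walshcrs}, but now in the presence of the extra $\calD_k$ factor.

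The heart of the matter is then a single estimate: for the collection $\Upsilon_j := \bigcup_{\upsilon}\calI_{\upsilon,j}$ of at most $2^j|\Upsilon|$ pairwise disjoint intervals, with coefficients of size $\le 2^{-j/r}\sup_\upsilon\|m_\upsilon\|_{V^r}$, one needs
\[
\|\calD_k \sum_{I\in\Upsilon_j} b_I 1_I\|_{M^{q,*}} \le C_{q,r,\epsilon}(|\Xi|+2^j|\Upsilon|)^{\frac1q-\frac1r+\epsilon}\ 2^{-j/r}\ \sup_{\xi\in\Xi}\|\textstyle\sum_{|\omega|=2^k}a_\omega 1_\omega(\xi)\|_{V^r_k}\ \sup_\upsilon\|m_\upsilon\|_{V^r},
\]
after which summing in $j$ converges once $\epsilon$ is shrunk so that $\frac1q-\frac1r+\epsilon<\frac1r$ (which is possible precisely because $r<2q$, equivalently $\frac1q<\frac2r$, so $\frac1q-\frac1r<\frac1r$). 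To prove the displayed estimate I would run the Walsh-analog of the Nazarov--Oberlin--Thiele argument of \cite{nazarov10czd} that yields Lemma \ref{Lpmaxnocomplemma}, but applied to the multiplier sequence $\calD_k \sum_I b_I 1_I$ rather than $\calD_k$ alone. Concretely: fix $g$ with $\|g\|_{L^q}=1$; perform a multiple-frequency Calder\'on--Zygmund decomposition of $g$ at a level adapted to the combined frequency set — roughly, the $|\Xi|+2^j|\Upsilon|$ frequencies consisting of $\Xi$ together with one marked point from each interval of $\Upsilon_j$ — writing $g = g_{\mathrm{good}} + \sum b_\mathrm{bad}$; handle the good part by an $L^2\to L^2$ bound (where $M^{2,*}$ is available for $\calD_k$ by Lemma \ref{varbourgainlemma} and $M^2=L^\infty$ absorbs the bounded, finitely-many-jumps multiplier $\sum b_I 1_I$ at cost $(2^j|\Upsilon|)^{1/2}\cdot 2^{-j/r}$ via an application of Lemma \ref{intervalmultiplierlemma}-type reasoning, or more cleanly by noting $\|\sum_I b_I 1_I\|_{L^\infty}\le 2^{-j/r}\sup\|m_\upsilon\|_{V^r}$ and folding the $V^r_k$ factor of $\calD_k$ in directly); and handle the bad parts by the standard disjoint-support / tail bound that loses only logarithmically and converts the $L^2$ gain into an $L^q$ statement with the stated power $\frac1q-\frac1r$ of the total frequency count. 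Interpolating the $L^2$ estimate with the weak-type endpoint produces exactly the exponent $\frac1q-\frac1r+\epsilon$ on $(|\Xi|+2^j|\Upsilon|)$.

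I expect the main obstacle to be the bookkeeping in the Calder\'on--Zygmund step: one must simultaneously track the jumps of $\calD_k$ in $k$ (the $V^r_k$ norm, which is what makes the $\ell^\infty_k$ maximal function controllable) and the jumps of $\sum_I b_I 1_I$ in $\xi$, and verify that the bad-function pieces associated with the $2^j|\Upsilon|$ new frequency points genuinely behave, on the complement of a controlled exceptional set, like pieces adapted to those frequencies — i.e. that the wave-packet supports interact correctly with both $\calD_k$ and the step multiplier. The key technical point is that the argument must be arranged so that the $V^r$ norms enter \emph{only} through the $\sup$ over $\xi\in\Xi$ of $\|\sum_{|\omega|=2^k}a_\omega 1_\omega(\xi)\|_{V^r_k}$ and the $\sup_\upsilon\|m_\upsilon\|_{V^r}$, with all dimensional losses collected into the single power of $|\Xi|+|\Upsilon|$; this forces the decomposition level and the interpolation exponents to be chosen in a coordinated way. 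Once that is set up, the restriction $2<r<2q$ is exactly the hypothesis needed to close the $j$-summation, and the constant $C_{q,r,\epsilon}$ absorbs the logarithmic factors from Lemma \ref{varbourgainlemma} (note $\log(2^j|\Upsilon|)\lesssim_\epsilon (2^j|\Upsilon|)^\epsilon$).
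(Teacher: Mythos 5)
Your proposal follows essentially the same route as the paper. Your ``displayed estimate'' for a fixed $j$ is precisely the content of Lemma \ref{hybridintervallemma}, which the paper proves separately by the multiple-frequency Calder\'on--Zygmund argument you sketch (with $\Lambda = \Xi \cup \bigcup_{\upsilon}\{\inf\upsilon,\sup\upsilon\}$ as the augmented frequency set, an $L^2$ bound for the good function via Lemma \ref{varbourgainlemma}, and a support/cancellation argument showing that the bad-function contribution lives inside the exceptional set); the paper's proof of Lemma \ref{hybridlemma} is then just the two-line reduction ``apply Lemma \ref{intervaldecompositionlemma}, invoke Lemma \ref{hybridintervallemma} at each dyadic level $j$, and sum using $r<2q$.'' Your outline reproduces exactly that chain, merely inlining the CZ step rather than quoting it as a lemma, so the approach is the same.
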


A version of the lemma above, but with $(|\Xi| + |\Upsilon|)^{\frac{1}{q} - \frac{1}{r} + \epsilon}$ replaced by 
\\ $|\Xi|^{\frac{1}{q} - \frac{1}{r} + \epsilon} |\Upsilon|^{\frac{1}{q} - \frac{1}{2} + \epsilon}$
would follow by combining Lemmas \ref{Lpmaxnocomplemma} and \ref{Walshcrs} and estimating the operator norm of the composition by the product of the operator norms. In our application we will take $r$ arbitarily close to $2$ and $|\Upsilon| = |\Xi|$; thus, the norm bound obtained above improves substantially on the combination of the two prior lemmas; this improvement seems to be necessary to obtain the desired range of exponents in Theorem \ref{walshMpstar}.

The new ingredient needed in the proof of Lemma \ref{hybridlemma}  is the following hybrid of Lemmas \ref{Lpmaxnocomplemma} and \ref{intervalmultiplierlemma}.

\begin{lemma} \label{hybridintervallemma}
Let $r > 2$, $1 < q \leq 2,$ and $\Xi \subset \rea^+.$ Suppose that $\Upsilon$ is a collection of pairwise disjoint subintervals of $\rea^+$ and that $\{b_{\upsilon}\}_{\upsilon \in \Upsilon} \subset \rea$ is a collection of coefficients. Then
\begin{multline*}
\|\calD_k \sum_{\upsilon \in \Upsilon} b_{\upsilon}1_{\upsilon}  \|_{M^{q,*}} \leq \\ C_{q,r}  (1 + \log(|\Xi| + |\Upsilon|))(|\Xi| + |\Upsilon|)^{\frac{1}{q} - \frac{1}{r}} \sup_{\xi \in \Xi}  \|\sum_{|\omega| = 2^k } a_{\omega}1_{\omega}(\xi)\|_{V^r_k} \sup_{\upsilon \in \Upsilon}|b_{\upsilon}|.
\end{multline*}
\end{lemma}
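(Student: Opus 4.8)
The plan is to reduce Lemma~\ref{hybridintervallemma} to a single application of Lemma~\ref{Lpmaxnocomplemma} by absorbing the interval-multiplier $\sum_{\upsilon}b_{\upsilon}1_{\upsilon}$ into a modified frequency set and a modified coefficient sequence, so that we pay only one factor of $(|\Xi|+|\Upsilon|)^{1/q-1/r}$ rather than the product of two separate losses. First I would record the endpoints of the intervals in $\Upsilon$: let $\Xi'$ be the union of $\Xi$ with the (at most $2|\Upsilon|$) endpoints of the $\upsilon\in\Upsilon$, so $|\Xi'|\le|\Xi|+2|\Upsilon|$. On each dyadic frequency interval $\omega$ that meets $\Xi'$ but is \emph{not} split by any endpoint of $\Upsilon$, the function $\sum_{\upsilon}b_{\upsilon}1_{\upsilon}$ is constant, equal to some $c_\omega$ with $|c_\omega|\le\sup_\upsilon|b_\upsilon|$. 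The idea is then to write $\calD_k\sum_{\upsilon}b_{\upsilon}1_{\upsilon}$, up to a harmless error supported on the finitely many $\omega$ containing an endpoint, as a new multiplier $\calD'_k(\xi)=\sum_{|\omega|=2^k,\,\omega\cap\Xi'\neq\emptyset}a'_\omega 1_\omega(\xi)$ with $a'_\omega=a_\omega c_\omega$ (and suitable definitions at scales too coarse to resolve the $\upsilon$).

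The key point to verify is that this substitution does not inflate the $V^r_k$ quantity on the right-hand side. The $V^r$ variation of $k\mapsto\sum_{|\omega|=2^k}a'_\omega 1_\omega(\xi)$ at a fixed $\xi$ should be controlled by the product of $\|\sum_{|\omega|=2^k}a_\omega 1_\omega(\xi)\|_{V^r_k}$ and $\sup_\upsilon|b_\upsilon|$: once $k$ is small enough that the dyadic interval of length $2^k$ containing $\xi$ lies inside a single $\upsilon$, the extra factor is the constant $b_\upsilon$ and contributes nothing to the variation; for the boundedly many coarser scales one uses the crude $L^\infty$ bound. This is the step I expect to require the most care — handling the scales where a dyadic $\omega$ straddles an endpoint of some $\upsilon$, and checking that the number of such ``bad'' scales per point is $O(\log(|\Xi|+|\Upsilon|))$, which is exactly where the logarithmic factor in the statement comes from. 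Here the restriction $q\le 2$ is used as in Lemma~\ref{Lpmaxnocomplemma}, via interpolation between an $L^2$ estimate and a weak-type bound.

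Concretely, I would: (1) reduce by a limiting argument to finitely many intervals of finite length and to a finite convex bitile collection, as already permitted; (2) split $\Xi'=\Xi\cup\{\text{endpoints of }\Upsilon\}$ and decompose the multiplier $\calD_k\sum_\upsilon b_\upsilon 1_\upsilon$ into its ``resolved'' part $\calD'_k$ plus a boundary part; (3) bound the boundary part directly by Lemma~\ref{intervalmultiplierlemma} together with the trivial bound $\|\calD_k\|_{M^{q,*}}\lesssim\sup_\omega|a_\omega|$, noting $\sup_\omega|a_\omega|\le\sup_\xi\|\sum_{|\omega|=2^k}a_\omega1_\omega(\xi)\|_{V^r_k}$, which gives an acceptable contribution; (4) apply Lemma~\ref{Lpmaxnocomplemma} to $\calD'_k$ with the frequency set $\Xi'$ and coefficients $a'_\omega$, using the pointwise variation estimate above and $|\Xi'|^{1/q-1/r}\lesssim(|\Xi|+|\Upsilon|)^{1/q-1/r}$. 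The main obstacle, as noted, is step (4)'s input: proving the clean pointwise bound $\|\sum_{|\omega|=2^k}a'_\omega1_\omega(\xi)\|_{V^r_k}\lesssim\sup_{\xi'\in\Xi'}\|\sum_{|\omega|=2^k}a_\omega1_\omega(\xi')\|_{V^r_k}\,\sup_\upsilon|b_\upsilon|$ uniformly in $\xi$, which forces the careful bookkeeping of straddling scales and is the source of the $1+\log(|\Xi|+|\Upsilon|)$ factor.
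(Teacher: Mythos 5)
Your reduction runs into a genuine obstruction at the ``boundary part,'' and the tool you propose for it does not exist. Let me spell out the two main problems.

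First, the error term is not easier than the original problem. After you set $\Xi' = \Xi \cup \{\text{endpoints of }\Upsilon\}$ and define $a'_\omega = a_\omega c_\omega$ on resolved $\omega$'s, the error $\text{err}_k := \calD_k\sum_\upsilon b_\upsilon 1_\upsilon - \calD'_k$ is, at scale $k$, supported on the $O(|\Upsilon|)$ dyadic intervals $\omega$ of length $2^{-k}$ that both meet $\Xi$ and straddle an endpoint of some $\upsilon$; on such an $\omega$ it equals $a_\omega\bigl(\sum_\upsilon b_\upsilon 1_\upsilon - c_\omega\bigr)1_\omega$, which is again of the form ``$\calD$-type multiplier times step function'' and hence has exactly the structure whose $M^{q,*}$ norm you are trying to bound. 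You do not gain smallness either: for a fixed $\xi\in\upsilon$, the scales at which $\omega(\xi,k)$ straddles an endpoint form an infinite half-line $\{k\ge k_1(\xi)\}$, not a set of size $O(\log(|\Xi|+|\Upsilon|))$. Moreover, the tool you invoke to dispatch the error, the ``trivial bound'' $\|\calD_k\|_{M^{q,*}} \lesssim \sup_\omega|a_\omega|$, is not trivial and not true in general without a factor of $|\Xi|^{1/2-1/r}$; if it held, Lemma~\ref{varbourgainlemma} (Bourgain's estimate, whose $|\Xi|^{1/2-1/r}$ loss is essentially sharp) would be superfluous. Lemma~\ref{intervalmultiplierlemma} only gives $M^q$ control, with no maximality in $k$, so composing it with a nonexistent maximal bound does not produce an $M^{q,*}$ estimate.

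Second, your explanation of the logarithmic factor is off. In the paper it is inherited wholesale from Lemma~\ref{varbourgainlemma} through the $q=2$ endpoint of the interpolation, not from counting straddling scales. Relatedly, note that within this paper's exposition Lemma~\ref{Lpmaxnocomplemma} is not proved independently; the text says it ``would follow'' from the very argument that proves Lemma~\ref{hybridintervallemma}, so relying on it as a black box is circular here.

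The paper avoids all of this by proving Lemma~\ref{hybridintervallemma} directly: it interpolates the $q=2$ case (immediate from Lemma~\ref{varbourgainlemma}) against a weak-type $(1,1)$ bound established by a multiple-frequency Calder\'on--Zygmund decomposition whose frequency set is precisely $\Lambda = \Xi \cup \bigcup_\upsilon\{\inf\upsilon,\sup\upsilon\}$ (your $\Xi'$). The good function is handled by the $L^2$ estimate. The substantive new step, replacing your absorption idea, is to show that $h=(\sum_\upsilon b_\upsilon 1_\upsilon\, \hat{\mfb})\check{\ }$ inherits the support-on-$E$ and cancellation properties of the bad function $\mfb$: disjointness of the $\upsilon$'s and the presence of the endpoints in $\Lambda$ make the step-function multiplier commute with the CZ structure on the physical side, exactly where your attempt to commute it with the coefficients $a_\omega$ on the frequency side breaks down at straddling scales.
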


\begin{proof}
Through a limiting argument, one may assume that all intervals in $\Upsilon$ have finite length.

The desired bound at $q=2$ follows immediately from Lemma \ref{varbourgainlemma}, and so by interpolation it suffices to prove a weak-type estimate at $q=1$. Specifically, given $g \in L^1$ we need to show that for each $\lambda > 0$
\[
|\{x : \sup_k|(\calD_k\sum_{\upsilon \in \Upsilon} b_{\upsilon} 1_\upsilon \hat{g})\check{\ }(x)| > \lambda\}| \leq C N^{1/2} B \|g\|_{L^1}/\lambda
\]
where $N = |\Xi| + |\Upsilon|$ and 
\[
B =(1 + \log(N)) N^{\frac{1}{2} - \frac{1}{r}} \sup_{\xi \in \Xi}  \|\sum_{|\omega| = 2^k} a_{\omega}1_{\omega}(\xi)\|_{V^r_k} \sup_{\upsilon \in \Upsilon}|b_{\upsilon}|.
\]
  
We start by performing a multiple-frequency Calder\'{o}n-Zygmund decomposition. Let  
\[
E = \{x : M[g](x) > \lambda/(N^{1/2}B)\} 
\]
where $M$ is the dyadic version of the Hardy-Littlewood maximal operator. Let $\calI$ be the collection of maximal dyadic intervals contained in $E$ and 
\[
\Lambda = \Xi \cup \bigcup_{\upsilon \in \Upsilon} \{\inf \upsilon,\sup \upsilon\}.
\] 
We now construct the ``good function'' $\mfg$. Let 
\[
\mfg_0 = \sum_{I \in \calI} \sum_{\substack{|\omega| = |I|^{-1} \\ \omega \cap \Lambda \neq \emptyset}} \<g,\phi_{I \times \omega}\>\phi_{I \times \omega}
\]
where the second sum above is over dyadic intervals $\omega$. Setting 
\[
\mfg = \mfg_0 + 1_{\rea^+ \setminus E} g
\] 
we obtain the ``bad function''
\[
\mfb = g - \mfg.
\]
and write $\mfb_I$ in place of $1_I \mfb$ for each $I \in \calI.$

The contribution from the good function is controlled, as usual, by the previously known $L^2$ bound. Indeed, by the maximality of the intervals $I$, we have
\[
|\<g,\phi_{I \times \omega}\>| \leq 2 |I|^{1/2} \lambda/(N^{1/2}B).
\]
Using the orthogonality of the wavepackets and the fact that $|\Lambda| \leq 2 N$ we have
\[
\|1_I \mfg_0\|_{L^2} \leq C |I|^{1/2} \lambda/B. 
\]
This gives 
\[
\|\mfg_0\|_{L^2}^2 \leq C |E| \lambda^2/B^2 \leq C N^{1/2} \|g\|_{L^1} \lambda/B.
\]
Since $g$ is bounded by $\lambda/(N^{1/2}B)$ away from $E$, we have
\[
\|\mfg - \mfg_0\|_{L^2}^2 \leq N^{-1/2} \|g\|_{L^1} \lambda/B.
\]
Thus
\begin{align*}
|\{x : \sup_k |(\calD_k \sum_{\upsilon \in \Upsilon} b_{\upsilon} 1_\upsilon \hat{\mfg})\check{\ }(x)| > \lambda/2\}| &\leq 4 \|\sup_k|(\calD_k \sum_{\upsilon \in \Upsilon} b_{\upsilon} 1_\upsilon \hat{\mfg})\check{\ }|\|_{L^2}^2/\lambda^2 \\
& \leq C B^2 \|\mfg\|_{L^2}^2/\lambda^2 \\
& \leq C N^{1/2} B \|g\|_{L^1}/\lambda
\end{align*}
as desired. 

It remains to control the contribution from the bad function. The two important properties of $\mfb$ are that it is supported on $E$ and that for each $I$ in $\calI$ we have $\<\mfb,\phi_{I \times \omega}\> = 0$ for every dyadic interval $\omega$ with $|\omega| = |I|^{-1}$ and $\omega \cap \Lambda \neq \emptyset.$ We claim that the function
\[
h = (\sum_{\upsilon \in \Upsilon} b_{\upsilon} 1_\upsilon \hat{\mfb})\check{\ }
\]
shares these two properties with $\mfb$. We first consider the support property. Fix $I \in \calI,$ suppose $\upsilon \in \Upsilon$, and let $\omega$ be a maximal dyadic subinterval of $\upsilon.$ Then 
\[
(1_{\omega} \hat{\mfb}_I)\check{\ }(x) = \<\mfb_I,\phi_{J \times \omega}\>\phi_{J \times \omega}(x)
\]
where $J$ is the dyadic interval of length $|\omega|^{-1}$ containing $x$. We clearly have $\<\mfb_I,\phi_{J \times \omega}\> = 0$ if $J$ has empty intersection with $I$. If $x \notin I$ and $J$ intersects $I$ then we have $I \subsetneq J$ and in particular $|I| < |J|.$ By the maximality of $\omega$, the dyadic interval $\tilde{\omega}$ of length $|I|^{-1}$ containing $\omega$ intersects $\{\sup \upsilon, \inf \upsilon\}$ and hence $\<\mfb_I,\phi_{I \times \tilde{\omega}}\> = 0.$ Using the fact that the restriction of $\phi_{J \times \omega}$ to $I$ is a constant multiple of $\phi_{I \times \tilde{\omega}}$ we see that $\<\mfb_I, \phi_{J \times \omega}\> = 0.$ Since each $\upsilon$ can be written as the union of maximal dyadic subintervals, this implies that 
$(\sum_{\upsilon \in \Upsilon}b_{\upsilon} 1_{\upsilon} \hat{\mfb}_I)\check{\ }$ is supported on $I$ and so $h$ is supported on $E$.
To verify the cancellation property of $h$, we let $I \in \calI$ and let $\omega$ be a dyadic interval of length $|I|^{-1}$ such that $\omega \cap \Lambda \neq \emptyset.$ Then $\<h,\phi_{I \times \omega}\>$ is zero simultaneously with the restriction of $(1_\omega\widehat{1_I h})\check{}$ to $I$. However, $1_Ih = (\sum_{\upsilon \in \Upsilon}b_{\upsilon} 1_{\upsilon} \hat{\mfb}_I)\check{\ }$ and so $1_\omega\widehat{1_I h} = \sum_{\upsilon \in \Upsilon}b_{\upsilon} 1_{\upsilon} 1_{\omega}\hat{\mfb}_I.$ From the cancellation property of $\mfb$, we know that $1_{\omega}\hat{\mfb}_I$ is identically zero.

Arguing as in the previous paragraph, except with $h$ in place of $\mfb$, one sees that each $(\calD_k \hat{h})\check{\ }$ is supported on $E$ and thus 
\[
|\{\sup_k |(\calD_k h)\check{\ }(x)| > \lambda/2\}| \leq |E| \leq N^{1/2} B \|g\|_{L^1}/\lambda.
\]
\end{proof}

\begin{proof}[Proof of Lemma \ref{hybridlemma}]
Following the argument in Lemma \ref{Walshcrs}, but with Lemma \ref{hybridintervallemma} substituted for Lemma \ref{intervalmultiplierlemma} we see that 
\begin{align*}
\|\calD_k&\sum_{\upsilon \in \Upsilon} m_{\upsilon} \|_{M^{q,*}}\\
&\leq \sum_{j \geq 0}C_{q,r,\epsilon'}  (|\Xi| + 2^j|\Upsilon|)^{\frac{1}{q} - \frac{1}{r} + \epsilon'} \sup_{\xi \in \Xi}  \|\sum_{|\omega| = 2^k} a_{\omega}1_{\omega}(\xi)\|_{V^r_k} \sup_{\upsilon \in \Upsilon, I \in \calI_{\upsilon,j}} |b_{I}|  \\
&\leq \sum_{j \geq 0}C_{q,r,\epsilon'}  2^{j(\frac{1}{q} - \frac{1}{r} + \epsilon')}(|\Xi| + |\Upsilon|)^{\frac{1}{q} - \frac{1}{r} + \epsilon'} \sup_{\xi \in \Xi}  \|\sum_{|\omega| = 2^k} a_{\omega}1_{\omega}(\xi)\|_{V^r_k} 2^{-\frac{j}{r}} \sup_{\upsilon \in \Upsilon} \|m_{\upsilon}\|_{V^r}. 
\end{align*}
The sum over $j \geq 0$ then converges provided that $r < 2q$ and $\epsilon' < \epsilon$ is chosen sufficiently small.
\end{proof}

\section{Tree decompositions}

Given a collection of bitiles $T$, a ``top frequency'' $\xi_T \in \rea^+$, and a dyadic ``top interval'' $I_T \subset \rea^+$, we say that $(T,\xi_T,I_T)$ form a tree if $I_P \subset I_T$ and $\xi_T \in \omega_P$ for every $P \in T.$ We say that a tree $T$ is ``td-maximal'' among trees in a collection $\T$ if it is maximal with respect to inclusion among trees in $\T$ with top data $(\xi_T,I_T).$ 
Given any convex tree $T$, we can rewrite $\cup_{P \in T}P$ as a disjoint union of tiles. For such a tree, we abbreviate
$$\Pi_{T}=\Pi_{\bigcup P: P\in T}\ \ .$$

For a convex collection $\P$ of bitiles define
$$\size(\P,f)=\sup_T |I_T|^{-1/2} \|\Pi_{T} f\|_{L^2}.$$
where the sup is over all convex trees $T \subset \P.$
Note that (since the $L^{\infty}$ norm can be controlled by projections to individual subtiles of elements of $T$) for each convex tree $T$ and $1 \leq p \leq \infty$ we have
\[
\|\Pi_Tf\|_{L^p} \leq C \size(T,f) |I_T|^{1/p}.
\]

The following lemma was proven in \cite{oberlin10nub}:

\begin{lemma}[{\bf Tree Selection}]\label{treeselection}
Assume $\P$ is a finite convex collection of bitiles with $\size(\P,f)\le 2^{-k}$. Then we can write
$\P$ as the disjoint union of a convex set of bitiles $\P'$ with the union of a collection $\T$
of convex trees such that
\begin{equation}\label{l1N}
\|\sum_{T\in \T} 1_{I_T}\|_1 \le C 2^{2k}\|f\|_2^2\ \ ,
\end{equation}
\begin{equation}\label{bmoN}
\|\sum_{T\in \T} 1_{I_T}\|_{\rm BMO} \le C 2^{2k}\|f\|_\infty^2\ \ ,
\end{equation}
and 
$\size(\P',f)\le 2^{-k-1}$.
\end{lemma}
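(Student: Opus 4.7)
My plan is a greedy tree extraction driven by a quasi-orthogonality estimate that simultaneously yields \eqref{l1N} and \eqref{bmoN}. Set $\P^{(0)} = \P$. While $\size(\P^{(n)}, f) > 2^{-k-1}$, there exist top data $(\xi,I)$ for which the td-maximal convex tree
\[
T_n = \{P \in \P^{(n)} : \xi \in \omega_P,\ I_P \subset I\}
\]
satisfies $|I|^{-1/2}\|\Pi_{T_n} f\|_{L^2} > 2^{-k-1}$; this uses the fact that enlarging a convex tree can only increase the $L^2$ mass of its projection, so the supremum defining $\size$ may be restricted to td-maximal trees. Adjoin $T_n$ to $\T$ and set $\P^{(n+1)} = \P^{(n)} \setminus T_n$. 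A direct check using the partial order shows that any td-maximal tree of this form is downward-closed inside $\P^{(n)}$, so $\P^{(n+1)}$ remains convex. Since $\P$ is finite the loop terminates, leaving a convex $\P'$ with $\size(\P', f) \le 2^{-k-1}$.

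The key technical input is the quasi-orthogonality
\begin{equation} \label{qoeq}
\sum_{T \in \T,\ I_T \subset J} \|\Pi_T f\|_{L^2}^2 \le C \|1_J f\|_{L^2}^2
\end{equation}
for every dyadic interval $J \subset \rea^+$. Since each wave packet $\phi_p$ in the tile decomposition of $\bigcup_{P \in T}P$ is supported on $I_p \subset I_T$, we have $\Pi_T f = \Pi_T(1_J f)$ whenever $I_T \subset J$, so \eqref{qoeq} reduces to the case $J = \rea^+$ applied to $1_J f$. For the global case one decomposes each $T_n$ at its top frequency $\xi_{T_n}$ into its lacunary and overlapping parts and argues that the wave-packet-carrying tiles across different $T_n$'s are pairwise disjoint in the phase plane — this is exactly what the inductive removal step together with td-maximality is designed to guarantee — after which orthogonality of wave packets on disjoint tiles and Bessel's inequality close the argument.

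Once \eqref{qoeq} is in hand, both bounds are immediate. From $|I_T| < 2^{2(k+1)}\|\Pi_T f\|_{L^2}^2$, summing and applying \eqref{qoeq} with $J = \rea^+$ gives $\sum_{T \in \T}|I_T| \le C 2^{2k}\|f\|_{L^2}^2$, which is \eqref{l1N}. Applying \eqref{qoeq} instead to a dyadic $J$ yields
\[
\frac{1}{|J|}\sum_{T \in \T,\ I_T \subset J}|I_T| \le 2^{2(k+1)}\frac{\|1_J f\|_{L^2}^2}{|J|} \le C 2^{2k}\|f\|_{L^\infty}^2,
\]
and the supremum of the left side over dyadic $J$ is comparable to the BMO norm of $\sum_T 1_{I_T}$ by the standard Carleson-sequence characterization, giving \eqref{bmoN}.

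The main obstacle I expect is the phase-plane disjointness claim underlying \eqref{qoeq}. While td-maximality and the convexity hypothesis on $\P$ make such disjointness plausible, the combinatorics must be handled with care — one has to trace how the removal of earlier trees constrains the frequency data of later selections and confirm that the relevant sub-tiles really are pairwise disjoint across the selected trees. This is the standard but delicate Walsh-tile argument that lies at the core of tree-selection lemmas of this type.
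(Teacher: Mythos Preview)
The paper does not actually prove this lemma; it is quoted from \cite{oberlin10nub}, with only a remark that the ``m-tree'' formulation there implies the ``tree'' formulation here. So there is no in-paper proof to compare against, and your outline is an attempt to reconstruct the standard argument.

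Your overall architecture---greedy extraction of td-maximal trees until the size drops, then a Bessel/quasi-orthogonality estimate \eqref{qoeq} to control $\sum_T|I_T|$ both globally and localized to a dyadic $J$---is exactly the standard shape of the argument. The convexity bookkeeping (td-maximal trees are downward closed, hence removing them preserves convexity of the remainder) is correct.

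The genuine gap is the one you yourself flag: \eqref{qoeq} does \emph{not} follow from td-maximality plus sequential removal alone. As written, your selection rule places no constraint on the order in which top frequencies $\xi_{T_n}$ are chosen, and without such a constraint the ``wave-packet-carrying tiles across different $T_n$'s are pairwise disjoint'' claim is false. A two-bitile example already shows the problem: take $\P=\{P,P'\}$ with $P=[0,1)\times[0,2)$ and $P'=[0,2)\times[1,2)$; your algorithm may select $T_1=\{P\}$ with $\xi_{T_1}\in[0,1)$ and then $T_2=\{P'\}$ with $\xi_{T_2}\in[1,2)$. The tile $P_u=[0,1)\times[1,2)$ lies in both $\bigcup T_1$ and $\bigcup T_2$, so $\Pi_{T_1}$ and $\Pi_{T_2}$ share $\phi_{P_u}$ in their range and $\|\Pi_{T_1}f\|_2^2+\|\Pi_{T_2}f\|_2^2=2\|f\|_2^2$ for $f=\phi_{P_u}$. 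Stacking such configurations shows no universal constant is available.

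What the standard proof (and \cite{oberlin10nub}) actually does is impose a monotone ordering on the top frequencies during selection---e.g.\ extract all qualifying trees in order of increasing $\xi_T$, and for each one first strip off the $u$-overlapping part and then the $l$-overlapping part. That ordering is precisely what forces the relevant lacunary tiles (say the $P_l$ for $P$ in the $u$-overlapping parts) to be pairwise disjoint across the selected trees: if $P_l$ and $P'_l$ from trees $T_m$, $T_n$ with $m<n$ met, then $\xi_{T_n}\ge\xi_{T_m}$ together with $P'\notin T_m$ would force a contradiction with td-maximality of $T_m$. Bessel on those disjoint tiles then gives \eqref{qoeq} for the $u$-parts, and the $l$-parts are handled symmetrically or absorbed. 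Your sketch has all the right pieces except this ordering; once you insert it, the argument goes through.
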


Strictly speaking, the lemma above was proven with a different definition of tree -- in \cite{oberlin10nub} a tree is a collection of bitiles with a unique maximal element, we will call this an m-tree. It is easily seen that the lemma for m-trees implies the lemma for trees since every m-tree is a tree and every finite convex tree $T$ contains a convex m-tree $T'$ with $\size(T',f) = \size(T,f).$

We also note that we may assume that the trees $T$ in the lemma above are td-maximal with among trees contained in $\bigcup_{T \in \T}T;$ this is accomplished by taking them to be td-maximal among trees contained in $\P.$ 

Finally, by following the proof of \eqref{bmoN} one sees that for any $\T' \subset \T$ we also have
\begin{equation*}
\|\sum_{T\in \T'} 1_{I_T}\|_{\rm BMO} \le C 2^{2k}\|f\|_\infty^2
\end{equation*}

We say that a tree $T$ is $l$-overlapping if $\xi_T \in \omega_{P_l}$ for every $P \in T$ and $u$-overlapping if $\xi_T \in \omega_{P_u}$ for every $P \in T.$ We call a set of bitiles $\P$ ``$l$-convex'' if $P, P'' \in \P$ and $P_l < P_l' < P_l''$ imply that $P_l' \in \P$. 
Finally, a collection of $l$-overlapping trees $\T$ will be called
``properly-sorted'' if the following conditions hold for each $T \in \T$:

\newcounter{listcounter}
\begin{list}{(\arabic{listcounter})}{\usecounter{listcounter}\setcounter{listcounter}{\value{equation}}}
\item  \label{propsortb}$T$ is $l$-convex 
\item \label{propsorta}For every $T' \in \T \setminus \{T\}$ we have $T' \cap T = \emptyset$
\item  \label{propsortc}For every $P \in T$ and $T' \in \T$ with $I_{T'} \cap I_P \neq \emptyset$ we have $\xi_{T'} \notin \omega_{P_u}.$
\end{list}\setcounter{equation}{\value{listcounter}}

The importance of condition \eqref{propsortb} is that when an $l$-overlapping tree $T$ is $l$-convex, 
\begin{equation} \label{topintervalinterval}
\bigcup_{\substack{P \in T \\ x \in I_P}} \omega_{P_u}
\end{equation}
is an interval for each $x$. Indeed, suppose that $\xi_1 < \xi_2 < \xi_3$ with $\xi_1,\xi_3$ in the set \eqref{topintervalinterval}. Then, there are bitiles $P^i \in T$ with $\xi_i \in \omega_{P^i_u}$ and $x \in I_{P^i}$ for $i = 1,3$; clearly $P^3_l \leq P^1_l.$
Let $\omega$ be the smallest dyadic interval with $\xi_T,\xi^2 \in \omega,$ let $I$ be the dyadic interval of length $2|\omega|^{-1}$ containing $x$, and let $P^2$ be the bitile $I \times \omega.$ Then $P^3_l \leq P^2_l \leq P^1_l,$ so by $l$-convexity $P^2 \in T$ and hence $\xi_2$ is in the set \eqref{topintervalinterval}.

The condition \eqref{propsortc} is taken from \cite{demeter08twm}; one of its immediate consequences is that the tiles $P_u$ with $P \in \bigcup_{T \in \T}$ are disjoint. Indeed, $P_u < P'_u$ would imply that the tree containing $P'_u$ has top frequency contained in $\omega_{P_u}.$

We will apply the lemma below to the collection of bitiles $\P \setminus \P'$ from Lemma \ref{treeselection}. 

\begin{lemma} \label{propsortlemma}
Suppose that a finite collection of bitiles $\P$ can be written as the union of convex trees $\P = \bigcup_{T \in \T} T.$ Assume that the trees are td-maximal among trees contained in $\P$. Then, we can write 
\[
\P = \bigcup_{T \in \T^u} T \cup \bigcup_{T \in \T^l}T 
\]
where 
\begin{list}{(\arabic{listcounter})}{\usecounter{listcounter}\setcounter{listcounter}{\value{equation}}}
\item \label{firstconditionps} For each $T \in \T^u$, $T$ is a $u$-overlapping tree which is td-maximal among $u$-overlapping trees contained in $\bigcup_{T \in \T^u} T$. 
\item \label{secondconditionps} $\T^l$ is a properly-sorted collection of $l$-overlapping trees.
\item \label{thirdconditionps} $\bigcup_{T \in \T^u}T \cap \bigcup_{T \in \T^l} T = \emptyset$.
\item \label{fourthconditionps} $\{(\xi_{T},I_{T}) : T \in \T^u\} = \{(\xi_{T},I_{T}) : T \in \T^l\}  \subset \{(\xi_{T},I_{T}) : T \in \T\}$.
\end{list}\setcounter{equation}{\value{listcounter}}
\end{lemma}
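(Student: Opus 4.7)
The plan is to build $\T^u$ and $\T^l$ by splitting each $T \in \T$ along the lower/upper boundary and then refining each collection to meet the stated conditions. For each $T \in \T$ with top data $(\xi_T, I_T)$, I define the natural $l$-part $T^l := \{P \in T : \xi_T \in \omega_{P_l}\}$ and $u$-part $T^u := \{P \in T : \xi_T \in \omega_{P_u}\}$, so that $T = T^l \sqcup T^u$. A direct check establishes that $T^l$ is $l$-convex: if $P, P'' \in T^l$ and $P_l \leq Q_l \leq P''_l$ in the tile ordering, the dyadic chain $\omega_{P''_l} \subset \omega_{Q_l} \subset \omega_{P_l}$ forces $\omega_{P''} \subset \omega_Q \subset \omega_P$ (the doubling dyadic parent of $\omega_{Q_l}$ must lie inside $\omega_P$), yielding the bitile chain $P \leq Q \leq P''$; convexity of $T$ then gives $Q \in T$, and $\xi_T \in \omega_{P''_l} \subset \omega_{Q_l}$ places $Q$ in $T^l$. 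Symmetrically $T^u$ is $u$-overlapping.

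I then set $\T^l_0 := \{T^l : T \in \T\}$ and $\T^u_0 := \{T^u : T \in \T\}$, with each tree retaining the top data of its parent (bitile sets allowed to be empty), giving (4). To enforce (3), I resolve any bitile in $\bigcup \T^l_0 \cap \bigcup \T^u_0$ by placing it on the $l$-side, i.e., redefining each $T^u := T^u \setminus \bigcup \T^l_0$; the union remains $\P$ and $\bigcup \T^l \cap \bigcup \T^u = \emptyset$ holds. For (1), I enlarge each remaining $T^u$ to the td-maximal $u$-overlapping tree with top data $(\xi_T, I_T)$ inside $\bigcup \T^u_0$, which does not alter the union.

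The remaining work refines $\T^l_0$ into a properly-sorted collection. The $l$-convexity condition (\ref{propsortb}) is inherited from the construction above. To enforce (\ref{propsorta}) and (\ref{propsortc}), I process the trees in order of decreasing $|I_T|$: at each step, I remove from the current $T^l$ any bitile $P$ that either already belongs to some previously-processed $T'^l$ (ensuring pairwise disjointness) or admits a previously-processed $T' \in \T^l$ with $I_{T'} \subsetneq I_P$ and $\xi_{T'} \in \omega_{P_u}$ (ensuring the non-crossing condition). If this procedure empties the current $T^l$, I drop its top datum from both $\T^l$ and $\T^u$, consistent with (4).

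The main obstacle I anticipate is verifying two claims after this refinement: first, that each modified $T^l$ remains $l$-convex despite bitile removals, which I expect to follow from the observation that the removed bitiles occupy specific outer positions in the $l$-tile ordering, constrained by the nesting structure of the colliding top intervals; and second, that all discarded bitiles (and the bitiles of dropped trees) remain covered by $\bigcup \T^l \cup \bigcup \T^u$. The coverage argument should use the td-maximality assumption on $\T$ together with the dyadic nesting of top intervals to show that whenever a tree is shrunk or dropped due to a conflict with a larger previously-processed tree $T'$, each affected bitile lies either in $T'^l$ or, via the enlargement step, in $T'^u$ (or in the $u$-part of some further ancestor tree whose top data is preserved in $\T^u$).
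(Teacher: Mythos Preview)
Your refinement step for condition~(\ref{propsortc}) is vacuous and therefore cannot work. You process trees by decreasing $|I_T|$, so a ``previously-processed'' tree $T'$ satisfies $|I_{T'}| \geq |I_T|$. But any bitile $P$ in the current tree $T$ has $I_P \subset I_T$, whence $|I_P| \leq |I_T| \leq |I_{T'}|$, making your test $I_{T'} \subsetneq I_P$ impossible. So your algorithm never removes a single bitile on account of~(\ref{propsortc}), and the condition will generally fail: a violation occurs whenever some $P \in T^l$ and some other tree $T'$ satisfy $I_{T'} \cap I_P \neq \emptyset$ and $\xi_{T'} \in \omega_{P_u}$, and there is no reason such a $T'$ must have $|I_{T'}| \geq |I_T|$. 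The structural point you are missing is that~(\ref{propsortc}) is a \emph{frequency} condition: if $P \in T^l$ then $\xi_T \in \omega_{P_l}$, so any offending $T'$ must have $\xi_{T'} > \xi_T$. This is why the paper orders the trees by top frequency $\xi_T$ (processing from the largest $\xi_T$ down) rather than by $|I_T|$, and at each step greedily extracts the \emph{maximal} $u$- and $l$-overlapping trees from the bitiles not yet assigned. With that ordering, a potential violator $T_j$ (with $\xi_{T_j} > \xi_{T_i}$) has already been processed, so $P$ survived the extraction of $T_j^u$; td-maximality then forces $I_{T_j} \subsetneq I_P$, contradicting the assumption that $T_j$ was not redundant.

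There is a second, related gap in your coverage argument. You first strip $\bigcup \T^l_0$ out of every $T^u$ to enforce~(\ref{thirdconditionps}), and only afterwards delete bitiles from trees in $\T^l$. A bitile $P$ lying in $T^l \cap T'^u$ (which happens exactly when $\xi_T \in \omega_{P_l}$, $\xi_{T'} \in \omega_{P_u}$, $I_P \subset I_{T'}$) is removed from $T'^u$ at the first stage; if it is later removed from $T^l$ at the refinement stage, it belongs to neither side. Your sketch appeals to ``the enlargement step'' placing it back in some $T'^u$, but your enlargement is taken inside $\bigcup \T^u_0$, which would re-introduce bitiles from $\bigcup \T^l_0$ and undo~(\ref{thirdconditionps}). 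The paper sidesteps this circularity entirely: its greedy removal assigns every bitile exactly once, so disjointness and coverage are automatic, and no after-the-fact repair is needed.
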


\begin{proof}
After throwing out some trees, we may assume that for each $T \in \T$,
\begin{equation} \label{nonredundantcondition}
T \not\subset \bigcup_{T' \in \T \setminus \{T\}} T'.
\end{equation}
We enumerate $\T = \{T_1, \ldots, T_N\}$ so that for each $i$, $\xi_{T_i} \leq \xi_{T_{i+1}}.$ 
Set $\P_{N+1} = \P$ and $T_{N+1} = \emptyset$ and for $i = 1, \ldots, N$ let $T_i^u$ be the maximal $u$-overlapping tree contained in $\P_{i+1}$ with top data $(\xi_{T_i},I_{T_i}),$ let $T_i^l$ be the maximal $l$-overlapping tree contained in $\P_{i+1}$ with top data $(\xi_{T_i},I_{T_i})$, and let $\P_i = \P_{i+1} \setminus (T^u_i \cup T^l_i).$ 

Set $\T^u = \{T^u_1, \ldots, T^u_N\}$ and $\T^l = \{T^l_1, \ldots, T^l_N\}.$

We now verify \eqref{propsortb}. Suppose that $P,P'' \in T^l_i$ and that $P_l < P'_l < P''_l.$  By convexity and td-maximality of the trees, we have $P,P',P'' \in T_i.$ It remains to verify that $P' \in \P_{i+1},$ but this follows immediately from the fact that $P'' \in \P_{i+1}.$

To check \eqref{propsortc} suppose that $P \in T_i^l$, $\xi_{T_j} \in \omega_{P_u},$ and $I_{T_j} \cap I_P \neq \emptyset$. Since $T_i^l$ is l-overlapping, $\xi_{T_j} > \xi_{T_i}$ and so we have $j > i.$ Since $P \in \P_{i+1}$ we know that $P \in \P_{j+1} \setminus T_j^u$; combining this with the fact that $\xi_{T_j} \in \omega_{P_u}$ implies that $I_{T_j} \subsetneq I_{P}$ contradicting \eqref{nonredundantcondition}.

Enlarging the trees in $\T^u$ to obtain td-maximality gives \eqref{firstconditionps}. Conditions \eqref{thirdconditionps}, \eqref{fourthconditionps}, and \eqref{propsorta} are clear from construction.
\end{proof}

\section{Global variation for a single tree}

The Lemma below will be used in Section \ref{poftsection} to give pointwise variation-norm estimates which are compatible with Corollary \ref{pwcorollary} and Lemmas \ref{Lpmaxnocomplemma}, \ref{Walshcrs}, and \ref{hybridlemma}. 

\begin{lemma} \label{globallemma} Suppose $1 < p < \infty$ and $r > 2$. Then for every tree $T$ which is contained in a convex tree $\overline{T}$ and which is either $l$-overlapping and $l$-convex or $u$-overlapping
\begin{equation} \label{gvstbound}
\|\sum_{P \in T} \<f,\phi_{P_l}\>\phi_{P_l}(x)1_{\omega_{P_u}}(\xi)\|_{L^p_x(V^r_\xi)} \leq C_{p,r} \size(\ov{T}) |I_{\ov{T}}|^{1/p}.
\end{equation}
\end{lemma}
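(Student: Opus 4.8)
The plan is to reduce the variation-norm estimate to the known $L^2$-based size control by exploiting the structure of a single tree, separately handling the $u$-overlapping and the $l$-overlapping/$l$-convex cases. In both cases the quantity inside the norm, as a function of $\xi$, is a step function whose jumps occur only at the endpoints of the intervals $\omega_{P_u}$, $P \in T$. The first step is to understand, for fixed $x$, how many "active" bitiles there are: those $P \in T$ with $x \in I_P$. Since the time intervals $I_P$ of bitiles in a tree containing $x$ are nested, they are linearly ordered by inclusion, and for each dyadic length there is at most one such $P$; thus at a fixed $x$ the function $\xi \mapsto \sum_{P \in T, x \in I_P} \<f,\phi_{P_l}\>\phi_{P_l}(x) 1_{\omega_{P_u}}(\xi)$ is governed by the nested family of frequency intervals $\omega_{P_u}$.

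For the $u$-overlapping case, all the $\omega_{P_u}$ containing a fixed $\xi_T$ are nested; more usefully, as observed in the excerpt (condition \eqref{propsortc} discussion), in the relevant configurations the tiles $P_u$ are disjoint, so for fixed $x$ the sets $\omega_{P_u}$ (over $P$ with $x \in I_P$) are pairwise disjoint dyadic intervals. Then the inner sum is literally a sum of "bumps" with disjoint $\xi$-supports, and its $V^r_\xi$ norm is comparable (up to a constant depending on $r$, via the embedding $\ell^r \hookrightarrow V^r$ for functions supported on disjoint intervals, really just $\ell^r \subset \ell^2$ reversed — here we use $r > 2$ so $\ell^2 \hookrightarrow \ell^r$) to $\bigl(\sum_P |\<f,\phi_{P_l}\>\phi_{P_l}(x)|^r\bigr)^{1/r} \leq \bigl(\sum_P |\<f,\phi_{P_l}\>\phi_{P_l}(x)|^2\bigr)^{1/2}$. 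Taking $L^p_x$ and recognizing the right side as (a piece of) $\|\Pi_T f\|$ or a square function dominated by the tree projection, one concludes via the standard bound $\|\Pi_{\ov T} f\|_{L^p} \le C\,\size(\ov T)|I_{\ov T}|^{1/p}$ recorded just before the Tree Selection lemma.

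For the $l$-overlapping, $l$-convex case, the key structural input is the observation proved in the excerpt right after the definition of properly-sorted: for $l$-convex $l$-overlapping trees the set $\bigcup_{P \in T, x \in I_P}\omega_{P_u}$ is an interval for each $x$. This lets us linearly order, for fixed $x$, the relevant bitiles by their frequency location so that the partial sums telescope along an interval of $\xi$-values; the jump of the step function at each breakpoint is exactly $\<f,\phi_{P_l}\>\phi_{P_l}(x)$ for the corresponding $P$, and the number of jumps at fixed $x$ equals the number of active scales. The $V^r_\xi$ norm is then again $\le \bigl(\sum_{P \in T, x\in I_P}|\<f,\phi_{P_l}\>\phi_{P_l}(x)|^r\bigr)^{1/r}$ plus the $L^\infty_\xi$ term, which is controlled by a single summand; using $r>2$ to pass to the $\ell^2$ sum, integrating in $x$, and dominating $\sum_P |\<f,\phi_{P_l}\>\phi_{P_l}(x)|^2$ pointwise by a quantity whose $L^{p/2}$ norm is $\lesssim \size(\ov T)^2 |I_{\ov T}|^{2/p}$ — this last step is the standard tree estimate, phrased via phase-space projections onto the lower tiles, using \eqref{projsubsetidentity} and that $T \subset \ov T$ — finishes the bound.

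The main obstacle is the final pointwise/$L^p$ control of $\sum_{P \in T}|\<f,\phi_{P_l}\>\phi_{P_l}(x)|^2$ by $\size(\ov T)^2 |I_{\ov T}|^{2/p}$ in $L^{p/2}_x$. The functions $\phi_{P_l}$ for distinct $P \in T$ are not orthogonal (the lower tiles $P_l$ overlap, since $\xi_T$ lies in all of them), so one cannot simply invoke Bessel. The resolution, following the Walsh tree estimates in \cite{demeter08twm}, is to split $\phi_{P_l}$ using \eqref{lowerrelation} into $\phi_{P_s}$ and $\phi_{P_d}$ contributions: the $\phi_{P_d}$ pieces have disjoint (sibling) tiles within a column and genuinely contribute an orthogonal/BMO-type square function bounded by $\size$, while the $\phi_{P_s}$ pieces telescope in scale and at each $x$ reduce to a single average $D_k f(x)$ type term, again bounded by $\size(\ov T)$. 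Handling this decomposition carefully — keeping track that $T$ sits inside the convex $\ov T$ so that all the projections are defined and \eqref{projsubsetidentity} applies — is the technical heart of the argument; everything else is the soft combinatorics of step functions and the elementary inequality $\|\cdot\|_{V^r} \lesssim_r (\#\text{jumps})\text{-free }\ell^2$ bound for $r>2$.
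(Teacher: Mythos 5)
Your proposal has two genuine gaps, and they both point to the same missing ingredient: L\'epingle's inequality for the $r$-variation of dyadic martingale averages, which is the heart of the paper's argument.

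First, the geometric claim in the $u$-overlapping case is backwards. For a single $u$-overlapping tree $T$, by definition $\xi_T \in \omega_{P_u}$ for every $P \in T$, so for fixed $x$ the intervals $\omega_{P_u}$ (over $P$ with $x\in I_P$) are \emph{nested}, not pairwise disjoint. (The disjointness you cite from the discussion after the definition of properly-sorted concerns the upper tiles coming from a collection of different $l$-overlapping trees, not a single $u$-overlapping tree.) Consequently $\xi\mapsto \sum_P c_P 1_{\omega_{P_u}}(\xi)$ is a sequence of \emph{partial sums} of the $c_P$, not a comb of disjoint bumps, and the pointwise bound $\|\cdot\|_{V^r_\xi}\lesssim(\sum|c_P|^r)^{1/r}$ is simply false for partial sums. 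In fact, it is exactly for the $l$-overlapping tree that the $\omega_{P_u}$ are disjoint.

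Second, even in the $l$-overlapping $l$-convex case where the comb picture is correct and $\|\cdot\|_{V^r_\xi}\lesssim(\sum_{P:x\in I_P}|c_P|^r)^{1/r}\le(\sum|c_P|^2)^{1/2}$, the final $L^{p/2}$ estimate of $\sum_{P\in T, x\in I_P}|\<f,\phi_{P_l}\>\phi_{P_l}(x)|^2$ by $\size(\ov T)^2|I_{\ov T}|^{2/p}$ is false. The coefficients $\<f,\phi_{P_l}\>\phi_{P_l}(x)=(1_{\omega_{P_l}}\hat g)\check{\ }(x)$ with $g=\Pi_{\ov T}f$ are (up to the $\sgn(\phi_{p_T})$ modulation) precisely the consecutive dyadic martingale averages $D_k[\sgn(\phi_{p_T})g](x)$ over all scales $k$ present in the tree; the $\ell^2$ sum of such averages over a range of $N$ scales grows like $\sqrt N\,\size(\ov T)$ and is not bounded independently of the depth of the tree. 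The paper avoids this by never reducing $V^r_\xi$ to a square function: it identifies the step function in $\xi$ with the sequence $D_{k}$ of martingale averages evaluated at $x$ and then invokes L\'epingle's $L^p(V^r)$ bound for martingales (valid precisely because $r>2$), in both the $u$-overlapping case (where the partial sums are full sum minus a martingale average) and the $l$-overlapping case (where the bump heights themselves are martingale averages). Your proposal never invokes L\'epingle, and without it the $r>2$ hypothesis is not used in a way that rescues either case.
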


\begin{proof}
Note that, by \eqref{projsubsetidentity}, the left side of \eqref{gvstbound} is equal to 
\begin{equation} \label{gvstpppbound} 
\|\sum_{P \in T} \<g,\phi_{P_l}\>\phi_{P_l}(x)1_{\omega_{P_u}}(\xi)\|_{L^p_x(V^r_\xi)}
\end{equation}
where $g = \Pi_{\ov{T}}f.$ 

First we consider the case when $T$ is $u$-overlapping. Then for each $\xi$ we have
\begin{equation} \label{gvstpppboundpw}
\sum_{P \in T} \<g,\phi_{P_l}\>\phi_{P_l}(x)1_{\omega_{P_u}}(\xi) = \sum_{\substack{P \in T \\ |I_T| \leq 2^{k_{\xi}}}} \<g,\phi_{P_l}\>\phi_{P_l}(x)
\end{equation}
where $k_{\xi}$ is the largest $k$ such that the dyadic interval of length $2^{-k}$ about $\xi_T$ contains $\xi$. Then $k_{\xi}$ is monotonic in $\xi$ on the intervals $(\xi_T,\infty)$ and $[0,\xi_T)$ and so
\eqref{gvstpppbound} is
\begin{equation} \label{gvstpppboundk}
\leq C \|\sum_{\substack{P \in T \\ |I_T| \leq 2^{k}}} \<g,\phi_{P_l}\>\phi_{P_l}(x)\|_{L^p_x(V^r_k)}.
\end{equation}
One can check that
\begin{multline*}
\sum_{\substack{P \in T \\ |I_T| \leq 2^{k}}} \<g,\phi_{P_l}\>\phi_{P_l}(x) \\
=\sum_{P \in T } \<g,\phi_{P_l}\>\phi_{P_l}(x)   - \sgn(\phi_{p_T}) D_k\left[\sgn(\phi_{p_T}) \sum_{P \in T} \<g,\phi_{P_l}\>\phi_{P_l} \right](x) 
\end{multline*}
where $p_T$ is a tile satisfying $|I_{p_T}| = |I_T|$ and $\xi_{T} \in \omega_{p_T}.$
Since $r > 2$ and $1 < p < \infty$, we may apply L\'{e}pingle's bound for the variation of martingale averages to see that the right side of \eqref{gvstpppboundk} is
\[
\leq C_{p,r} \|\sum_{P \in T } \<g,\phi_{P_l}\>\phi_{P_l}\|_{L^p} \leq C_{p,r} \|g\|_{L^p} \leq C_{p,r}\size(\ov{T}) |I_{\ov{T}}|^{1/p}
\]
as desired.

For $l$-overlapping $l$-convex $T$ one can check that, for $\xi > \xi_T$, the left side of \eqref{gvstpppboundpw}
\begin{equation} \label{lolcb1}
= \sum_{\substack{P \in T \\ |I_T| = 2^{k_{\xi}}}} \<g,\phi_{P_l}\>\phi_{P_l}(x)
\end{equation} 
where $k_{\xi}$ is the largest $k$ such that the dyadic interval of length $2^{1-k}$ about $\xi_T$ contains $\xi.$ This sum is zero when $\xi \geq \sup \omega_{P^{\min}_u(x)}$ and when $\xi < \inf \omega_{P^{\max}_u(x)}$ where $P^{\min}(x)$ and $P^{\max}(x)$ are the smallest and largest bitiles respectively from $T$ which contain $x$ in their time support. For intermediate $\xi$ it follows from $l$-convexity that there is a $P \in T$ with $|I_P| = 2^{k_\xi}$ and $x \in I_P$ and hence the right side of \eqref{lolcb1}
\[
= \sgn(\phi_{p_T}) D_{k_{\xi}}\left[\sgn(\phi_{p_T}) g \right](x).
\]
From the monotonicity of the $k_\xi$ it thus follows that \eqref{gvstpppbound} is 
\[
\leq C \|D_{k}\left[\sgn(\phi_{p_T}) g \right](x)\|_{L^p_x(V^r_k)} \leq C_{p,r} \|g\|_{L^p} \leq C_{p,r} \size(\ov{T}) |I_{\ov{T}}|^{1/p}
.
\]

\end{proof}


\section{Pointwise variation for stacks of trees}

Given a collection of trees $\T$ satisfying certain assumptions, the following lemma allows us to partition $\rea^+$ into a collection of intervals $\{\upsilon_T\}_{T \in \T}$ such that the restriction of a function of the form 
\begin{equation} \label{functionoftype}
\sum_{P \in \bigcup_{T \in \T}T} c_P 1_{\omega_{P_u}}(\cdot)
\end{equation}
to the interval $\upsilon_T$ is $\sum_{P \in T} c_P 1_{\omega_{P_u}}(\cdot).$
This partitioning will be used in the current section to obtain variation-norm estimates for functions of the form \eqref{functionoftype} and it will be used in Section \ref{tssection} to obtain estimates involving the Walsh-multiplier operators induced by functions of the form \eqref{functionoftype}.

\begin{lemma} \label{pointwiselemmau}
Suppose that $x \in \rea^+$ and that $\P$ is a finite collection of bitiles with $x \in I_P$ for each $P \in \P$ and with $\P = \bigcup_{T \in \T}T$ where $\T$ satisfies one of the following two conditions:
\begin{itemize}
\item $\T$ is a collection of $u$-overlapping trees which are td-maximal among $u$-overlapping trees contained in $\P$.
\item $\T$ is a collection of properly-sorted $l$-overlapping trees. 
\end{itemize}
Then there is a collection $\{\upsilon_T\}_{T \in \T}$ of pairwise disjoint intervals covering $\rea^+$ such that for each $T \in \T$ and $\xi \in \upsilon_T$
\begin{equation} \label{intervalfortree}
\{P \in \P : \xi \in \omega_{P_u}\} \subset T.
\end{equation}
\end{lemma}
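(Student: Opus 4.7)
Both cases proceed by associating to each tree $T \in \T$ a dyadic interval $\Omega_T \subset \rea^+$, establishing a suitable disjointness/nesting property for the $\Omega_T$'s, and then extending them to a partition of $\rea^+$ into intervals.

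In the properly-sorted $l$-overlapping case, I set $\Omega_T := \bigcup_{P \in T} \omega_{P_u}$. By $l$-convexity \eqref{propsortb} and the observation immediately following that condition, $\Omega_T$ is a dyadic interval. The paper's remark that the upper tiles $\{P_u : P \in \bigcup_{T' \in \T} T'\}$ are pairwise disjoint, combined with the hypothesis that $x$ lies in every $I_P$ (forcing all the corresponding time-supports to meet at $x$), implies the frequency intervals $\omega_{P_u}$ themselves are pairwise disjoint; together with condition~\eqref{propsorta} (distinct trees are disjoint as bitile sets), this gives $\Omega_T \cap \Omega_{T'} = \emptyset$ for $T \neq T'$. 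I then order the $\Omega_T$'s along $\rea^+$ and distribute the complementary gaps (including the unbounded end pieces) arbitrarily into neighbouring $\Omega_T$'s, producing $\{\upsilon_T\}$ with $\upsilon_T \supset \Omega_T$. The required inclusion is immediate: any $P \in \P$ with $\xi \in \omega_{P_u}$ has $\xi \in \Omega_{T(P)}$ for the unique $T(P) \ni P$, and since $\upsilon_T \cap \Omega_{T'} = \emptyset$ for $T' \neq T$, this forces $T(P) = T$, i.e.\ $P \in T$.

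In the $u$-overlapping td-maximal case, trees may share bitiles, so the $\Omega_T$'s defined as above need not be disjoint; I instead take $\Omega_T := \omega_{P^*_T, u}$, where $P^*_T$ is the bitile in $T$ with the largest $|I_P|$, a dyadic interval containing $\xi_T$. The key selection principle I establish is: if $\xi \in \Omega_T$ and no $T' \in \T$ with $\Omega_{T'} \subsetneq \Omega_T$ satisfies $\xi \in \Omega_{T'}$, then $\P_\xi \subset T$. Given such $\xi$ and $P \in \P_\xi$, letting $T''$ be a tree containing $P$, one has $\Omega_{T''} \subseteq \omega_{P_u}$; chasing the two cases $\omega_{P_u} \supseteq \Omega_T$ versus $\omega_{P_u} \subsetneq \Omega_T$, the minimality hypothesis rules out the second (since then $\Omega_{T''} \subsetneq \Omega_T$ and either $\xi \in \Omega_{T''}$, contradicting minimality, or a further td-maximality argument applies), leaving $\omega_{P_u} \supseteq \Omega_T \ni \xi_T$ and $I_P \subseteq I_{P^*_T} \subseteq I_T$, so $P \in T$ by td-maximality of $T$. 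With the principle in hand, I process the trees in order of increasing $|\Omega_T|$ (innermost first), assigning to $\upsilon_T$ the part of $\Omega_T$ not yet assigned, and absorbing any resulting ``hole''---left by a strictly inner $\Omega_{T'}$---into $\upsilon_{T'}$ (or a neighbouring interval) so that $\upsilon_T$ remains a single interval; this absorption is consistent because $\Omega_{T'} \subsetneq \Omega_T$ together with td-maximality of $T'$ forces $P^*_T \in T'$, which ensures $\P_\xi$ on a hole is contained in both $T$ and $T'$. Finally, the remainder $\rea^+ \setminus \bigcup_T \Omega_T$ (where $\P_\xi = \emptyset$) is distributed into the outermost intervals.

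The main obstacle is ensuring that each $\upsilon_T$ remains a single interval in the $u$-overlapping case, where the $\Omega_T$'s can form a nontrivial nesting forest rather than being pairwise disjoint. The hole-absorption step is the delicate one, and it requires carefully combining the selection principle with the td-maximality of the trees applied at multiple scales.
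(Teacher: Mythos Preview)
Your treatment of the properly-sorted $l$-overlapping case is essentially the paper's argument: you set $\Omega_T=\bigcup_{P\in T}\omega_{P_u}$, use $l$-convexity to see this is an interval, use disjointness of the upper tiles (a consequence of \eqref{propsortc}) together with \eqref{propsorta} to see the $\Omega_T$'s are pairwise disjoint, and then pad out to a partition. One small correction: $\Omega_T$ is an interval but in general \emph{not} a dyadic interval (e.g.\ with $\xi_T=0$ and bitiles of two consecutive scales one gets $[1,4)$); this does not affect anything.

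Your $u$-overlapping argument, however, has a genuine gap. With $\Omega_T:=\omega_{P^*_T,u}$ for $P^*_T$ the bitile of \emph{largest} $|I_P|$ in $T$, the interval $\Omega_T$ is the \emph{smallest} among $\{\omega_{P_u}:P\in T\}$, not the largest, so the union $\bigcup_{T\in\T}\Omega_T$ need not cover $\{\xi:\P_\xi\neq\emptyset\}$. Concretely, take $x=0$ and the two bitiles $P_1=[0,\tfrac12)\times[0,4)$ (so $\omega_{P_1,u}=[2,4)$) and $P_2=[0,1)\times[2,4)$ (so $\omega_{P_2,u}=[3,4)$), and the single td-maximal $u$-overlapping tree $T=\{P_1,P_2\}$ with $\xi_T=3.5$, $I_T=[0,1)$. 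Then $P^*_T=P_2$ and $\Omega_T=[3,4)$, but for $\xi=2.5$ one has $\P_\xi=\{P_1\}\neq\emptyset$ while $\xi\notin\Omega_T$. Thus your final claim that $\P_\xi=\emptyset$ on the remainder $\rea^+\setminus\bigcup_T\Omega_T$ is false, and any arbitrary distribution of that remainder would violate \eqref{intervalfortree}. The selection principle, the nesting analysis, and the hole-absorption step (whose ``further td-maximality argument'' you leave unspecified) all become moot once this covering fails.

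The paper avoids this obstacle entirely by a different construction: after discarding redundant trees, it orders $\T=\{T_1,\dots,T_N\}$ by increasing $\xi_{T_i}$, lets $P^i$ be the \emph{minimal} bitile in $T_i\setminus\bigcup_{i'<i}T_{i'}$, and sets the interval endpoints $\xi_i^-=\inf\omega_{P^i_u}$. One checks $\xi_1^-<\cdots<\xi_N^-$, so $\upsilon_{T_i}=[\xi_i^-,\xi_{i+1}^-)$ (with the obvious conventions at the ends) gives an honest partition of $\rea^+$ into consecutive intervals, with no nesting forest or hole-absorption needed; the verification of \eqref{intervalfortree} is then a short case analysis comparing $P$ with $P^{i'}$ for the minimal $i'$ with $P\in T_{i'}$.
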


\begin{proof}
We start by proving the lemma under assumption of the first condition.
Without loss of generality assume that $|\T| \geq 2$, and that for every $T \in \T$ 
\[
T \not\subset \bigcup_{T' \in \T \setminus \{T\}} T'.
\] 
By td-maximality we see that $\xi_T \neq \xi_{T'}$ for $T \neq T'.$ Enumerate the trees $T_1, \ldots, T_N$ so that $\xi_{T_{i}} < \xi_{T_{i+1}}$ for $i=1, \ldots, N-1$. 

For each $i$ let $P^i$ be the minimal bitile in $T_i \setminus \bigcup_{i' < i} T_{i'}$, and let $\xi_i^- := \inf(\omega_{P^i_u}).$ 
We claim that $\xi_i^- > \xi_{i'}^-$ whenever $i > i'$. To see this, first note that we may assume that $\omega_{P^i_u} \cap \omega_{P^{i'}_u} \neq \emptyset$ or else the conclusion would follow from the fact that $\xi_{T_i} > \xi_{T_{i'}}.$ Since $x \in I_{P^{i'}} \cap I_{P^{i}}$ we then have $P^{i'}_u \cap P^i_u \neq \emptyset$. Thus, we must have $P^{i}_u > P^{i'}_u$ since $P^{i}_u \leq P^{i'}_u$ would imply that $P^{i} \in T_{i'}$ contradicting the definiton of $P^{i}.$ Then $\omega_{P^i} \subsetneq \omega_{P^{i'}_u}$ and so $\xi_{i}^- > \xi_{i'}^-$ as desired.
 
Set 
$\upsilon_{T_1} = [0,\xi_{2}^-), 
\upsilon_{T_N} = [\xi_{N}^-,\infty),$ 
and $\upsilon_{T_i} = [\xi_{i}^-,\xi_{i+1}^{-})$
for $1 < i < N.$ From the previous paragraph, we see that $\upsilon_{T_1}, \ldots, \upsilon_{T_N}$ are disjoint and cover $\rea^+.$ 

It remains to check that if $\xi \in \upsilon_{T_i}$ and $P \in \P$ with $\xi \in \omega_{P_u}$ then $P \in T_i.$ First assume $1 < i < N.$
Choose the minimal $i'$ such that $P \in T_{i'}$. First suppose $i' < i.$ Since $\xi \in \omega_{P_u}$,\ $\xi \geq \inf(\omega_{P^i_u}),$\ and $\xi_{T_{i'}} < \xi_{T_i},$ we must have $\omega_{P_u} \cap \omega_{P^i_u} \neq \emptyset.$ The fact that $P^i \notin T_{i'}$ rules out the possibility that $P^i_u \leq P_u$ and so we must have $P^i_u > P_u.$ But then $P \in T_i$ as desired. Now suppose $i' > i.$ By minimality of $i'$ we have $P \in T_{i'} \setminus \bigcup_{i'' < i'}T_{i''}.$ Then, by minimality of $P^{i'}$ we have $P \geq P^{i'}$ and so $\inf(\omega_{P_u}) \geq \xi_{i'}^-$ contradicting the fact that $\xi \in \omega_{P_u}.$ The appropriate halves of this argument work when $i=1$ or $N$.  

Working instead under the second condition,
for each $T \in \T$ we let 
\[
\tilde{J}_T = \bigcup_{\substack{P \in T \\ x \in I_P}} \omega_{P_u}
\]
By \eqref{propsortb} each $\tilde{J}_T$ is an interval. From \eqref{propsortc} we know that the tiles $P_u$ with $P \in \bigcup_{T \in \T}T$ are disjoint, which gives \eqref{intervalfortree} for $\xi \in \tilde{J}_T$. Combining \eqref{propsortc} with \eqref{propsorta} one sees that the intervals $\tilde{J}_T$ are pairwise disjoint. Finally, the left and right sides of \eqref{intervalfortree} are both zero for $\xi$ outside of $\bigcup_{T \in \T} \tilde{J}_T$; thus by choosing $\{J_T\}_{T \in \T}$ to be any collection of pairwise disjoint intervals which cover $\rea^+$ and which satisfy $\tilde{J}_T \subset J_T$, we are finished.    

\end{proof}

The following corollary, which can be used to obtain \eqref{mainthm}, follows immediately from the lemma above.

\begin{corollary} \label{pwcorollary} 
Suppose that $\P, \T$ and $x$ satisfy the hypotheses of Lemma \ref{pointwiselemmau}. Then for any collection of coefficients $\{c_P\}_{P \in \P} \subset \rea$
\begin{equation*} 
\|\sum_{P \in \P} c_P 1_{\omega_{P_u}}\|_{V^r} \leq C |\T|^{\frac{1}{r}} \sup_{T \in \T} \|\sum_{P \in T} c_P 1_{\omega_{P_u}}\|_{V^r}
\end{equation*}
\end{corollary}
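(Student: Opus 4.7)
The plan is to use the partition $\{\upsilon_T\}_{T\in\T}$ produced by Lemma \ref{pointwiselemmau}, which reduces the variation of $f := \sum_{P \in \P} c_P 1_{\omega_{P_u}}$ on each piece to the variation of the single-tree function $f_T := \sum_{P \in T} c_P 1_{\omega_{P_u}}$. Indeed, for $\xi \in \upsilon_T$ the containment \eqref{intervalfortree} implies $f(\xi) = f_T(\xi)$, so $f$ is determined piecewise by the $f_T$.

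Fix an arbitrary increasing finite sequence $\xi_0 < \xi_1 < \cdots < \xi_N$. Since the $\upsilon_T$ are pairwise disjoint intervals covering $\rea^+$, each $\xi_i$ lies in a unique $\upsilon_{T(i)}$, and the indices split into contiguous blocks according to which $\upsilon_T$ they belong to. I would separate the differences $f(\xi_i)-f(\xi_{i-1})$ into two classes: \emph{interior} differences, where $\xi_i,\xi_{i-1}$ lie in the same $\upsilon_T$, and \emph{boundary} differences, where $\xi_{i-1}\in\upsilon_{T'}$ and $\xi_i\in\upsilon_T$ with $T\neq T'$. There are at most $|\T|-1$ boundary differences (one per transition between consecutive $\upsilon_T$'s in the natural left-to-right ordering).

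For interior differences, $f(\xi_i)-f(\xi_{i-1}) = f_T(\xi_i)-f_T(\xi_{i-1})$ and the sum of their $r$-th powers over the block corresponding to $T$ is at most $\|f_T\|_{V^r}^r$, giving a total interior contribution of at most $|\T|(\sup_T \|f_T\|_{V^r})^r$. For boundary differences, I use the trivial bound $|f(\xi_i)-f(\xi_{i-1})| \leq 2\|f\|_{L^\infty}$, and the pointwise identity $f|_{\upsilon_T}=f_T|_{\upsilon_T}$ yields $\|f\|_{L^\infty}\leq \sup_T\|f_T\|_{L^\infty}\leq \sup_T\|f_T\|_{V^r}$, so the boundary contribution is at most $(|\T|-1)\cdot 2^r (\sup_T\|f_T\|_{V^r})^r$. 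Combining and taking $r$-th roots gives the variation part, and a further application of the $L^\infty$ bound for $f$ handles the $L^\infty$ component of $\|f\|_{V^r}$; summing yields the stated constant $C|\T|^{1/r}\sup_T\|f_T\|_{V^r}$.

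There is no real obstacle here since the lemma does all the work; the only subtle point is that the $V^r$ norm contains an $L^\infty$ summand, which must be handled both in its own right and when estimating the at most $|\T|-1$ boundary jumps created by splitting a partition across the pieces $\upsilon_T$.
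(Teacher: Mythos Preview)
Your argument is correct and is exactly the natural way to fill in what the paper leaves implicit (the paper simply asserts that the corollary ``follows immediately'' from Lemma \ref{pointwiselemmau}). The pointwise identity $f|_{\upsilon_T}=f_T|_{\upsilon_T}$, the split into interior and at most $|\T|-1$ boundary jumps, and the $\ell^r$ accounting you give are precisely the computation the author had in mind.
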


To prove Theorem \ref{maxtruncvarmodtheorem} we will need the estimate below.

\begin{corollary} \label{pwcorollary2} 
Suppose that $\P, \T$ and $x$ satisfy the hypotheses of Lemma \ref{pointwiselemmau}. Then for any collection of coefficients $\{c_P\}_{P \in \P} \subset \rea$
\begin{equation*} 
\|\sum_{\substack{P \in \P \\ |I_P| < 2^k}} c_P 1_{\omega_{P_u}}(\xi)\|_{\ell^\infty_k(V^r_{\xi})} \leq C |\T|^{\frac{1}{r}} \sup_{T \in \T} \|\sum_{P \in T} c_P 1_{\omega_{P_u}}\|_{V^r}
\end{equation*}
\end{corollary}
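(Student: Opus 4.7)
My plan is to deduce Corollary \ref{pwcorollary2} from Corollary \ref{pwcorollary} applied, for each fixed truncation level $k$, to a suitably restricted collection of bitiles, together with a uniform-in-$k$ comparison of the truncated tree-sum to the full tree-sum.

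Fix $k$ and let $\P_k = \{P \in \P : |I_P| < 2^k\}$ and $T_k = T \cap \P_k$. The first step is to verify that $\{T_k : T \in \T,\ T_k \neq \emptyset\}$ satisfies, inside $\P_k$, the same hypothesis as $\T$ satisfied inside $\P$. In the $u$-overlapping case, each $T_k$ is $u$-overlapping with the same top data $(\xi_T, I_T)$, and td-maximality among $u$-overlapping trees in $\P_k$ is immediate from td-maximality of $T$ in $\P$. In the properly-sorted $l$-overlapping case, $l$-overlap, pairwise disjointness, and condition \eqref{propsortc} all pass directly to the $T_k$. The one slightly nontrivial check is $l$-convexity: if $P_l < P'_l < P''_l$ with $P, P'' \in T_k$, then $I_{P'} \subset I_{P''}$ yields $|I_{P'}| \leq |I_{P''}| < 2^k$, so $l$-convexity of $T$ gives $P' \in T$ and hence $P' \in T_k$. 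Applying Corollary \ref{pwcorollary} to $\P_k$ with these trees gives, with $F_{T,k} := \sum_{P \in T,\ |I_P| < 2^k} c_P 1_{\omega_{P_u}}$,
\[
\Bigl\|\sum_{P \in \P,\ |I_P| < 2^k} c_P 1_{\omega_{P_u}}\Bigr\|_{V^r_\xi} \leq C |\T|^{1/r} \sup_{T \in \T} \|F_{T,k}\|_{V^r}
\]
for every $k$.

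The remaining task, and the main obstacle, is to prove the uniform comparison
\[
\|F_{T,k}\|_{V^r} \leq C \|F_T\|_{V^r}, \qquad F_T := \sum_{P \in T} c_P 1_{\omega_{P_u}},
\]
with $C$ independent of $k$. I would attack this by writing $F_{T,k}$ explicitly in terms of $F_T$, using the frequency geometry of the tree. Write $\ell_j$ for the dyadic interval of length $2^{-j}$ containing $\xi_T$. For a $u$-overlapping tree, $\omega_{P_u}$ is the unique dyadic interval of length $|I_P|^{-1}$ containing $\xi_T$, and one reads off that $F_{T,k}$ coincides with $F_T$ outside $\ell_k$ and takes the constant value $\alpha_k$ on $\ell_k$, where $\alpha_k$ is the common value of $F_T$ on the dyadic sibling of $\ell_k$ inside $\ell_{k-1}$, hence satisfies $|\alpha_k| \leq \|F_T\|_\infty$. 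For an $l$-overlapping tree, $\omega_{P_u}$ is the dyadic sibling of $\ell_{\log_2 |I_P|}$ inside $\ell_{\log_2 |I_P| - 1}$, and a parallel computation shows that $F_{T,k}$ equals $F_T$ outside $\ell_{k-1}$ and vanishes on $\ell_{k-1}$.

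In both cases $F_{T,k}$ has the form $F_T 1_{J^c} + c\, 1_J$ for an interval $J$ and a constant $c$ with $|c| \leq \|F_T\|_\infty$. The uniform bound then follows from the elementary estimate $\|g\, 1_I\|_{V^r} \leq 3 \|g\|_{V^r}$ (multiplication by an indicator of an interval introduces at most two new jumps, each of size at most $\|g\|_\infty$) together with the triangle inequality for $\|\cdot\|_{V^r}$. Taking the supremum over $k$ in the display above yields the $\ell^\infty_k(V^r_\xi)$ bound claimed.
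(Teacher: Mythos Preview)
Your argument is correct and follows essentially the same route as the paper's proof: both reduce, for each fixed $k$, to the single-tree comparison
\[
\Bigl\|\sum_{\substack{P\in T\\ |I_P|<2^k}} c_P 1_{\omega_{P_u}}\Bigr\|_{V^r_\xi}\le C\,\Bigl\|\sum_{P\in T} c_P 1_{\omega_{P_u}}\Bigr\|_{V^r_\xi},
\]
and both verify it via the same frequency picture (for $u$-overlapping trees $F_{T,k}$ agrees with $F_T$ off $\omega_k$ and is constant on $\omega_k$ equal to the value of $F_T$ on the sibling; for $l$-overlapping trees it agrees off $\omega_{k-1}$ and vanishes on $\omega_{k-1}$).

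The one genuine, if minor, difference is in the first step. You restrict to $\P_k$ and re-verify the hypotheses of Lemma~\ref{pointwiselemmau} for the truncated trees $T_k$, then invoke Corollary~\ref{pwcorollary}. The paper instead observes that the \emph{single} covering $\{\upsilon_T\}$ obtained from Lemma~\ref{pointwiselemmau} for $\P$ already localizes the truncated sums for \emph{every} $k$ (since $\{P\in\P:\xi\in\omega_{P_u}\}\subset T$ implies the same for $\P_k$), so no re-verification is needed. This buys a cleaner argument and also, since in both cases the constant $c$ is an actual value of $F_T$ (namely $F_T(\xi')$ with $\xi'$ in the sibling, respectively $F_T(\xi_T)=0$), one can replace the points of a test sequence lying in $J$ by that single $\xi'$ and obtain the comparison with constant $C=1$ rather than the cruder $\|g\,1_I\|_{V^r}\le 3\|g\|_{V^r}$ estimate. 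Your route is nonetheless valid; the re-verification of td-maximality and of $l$-convexity for the $T_k$ is correct as written.
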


\begin{proof}
Thanks to the covering in Lemma \ref{pointwiselemmau}, it suffices to observe that for each $k$ 
\begin{equation} \label{trunceqfortree}
\|\sum_{\substack{P \in T \\ |I_P| < 2^k}} c_P 1_{\omega_{P_u}}(\xi)\|_{V^r_{\xi}} \leq C
\|\sum_{P \in T} c_P 1_{\omega_{P_u}}(\xi)\|_{V^r_{\xi}}. 
\end{equation}
Let $\omega_k$ denote the dyadic interval of length $2^{-k}$ containing $\xi_T.$ First treating the case where $T$ is $u$-overlapping, we note that for $\xi \notin \omega_k$ we have 
\begin{equation} \label{kredundant}
\sum_{\substack{P \in T \\ |I_P| < 2^k}} c_P 1_{\omega_{P_u}}(\xi) = \sum_{P \in T} c_P 1_{\omega_{P_u}}(\xi)
\end{equation}
and for $\xi \in \omega_k$ we have
\[
\sum_{\substack{P \in T \\ |I_P| < 2^k}} c_P 1_{\omega_{P_u}}(\xi) = \sum_{P \in T} c_P 1_{\omega_{P_u}}(\xi')
\]
where $\xi'$ is any point in $\omega_{k-1} \setminus \omega_k$. Combining these two facts immediately implies \eqref{trunceqfortree}. 

If $T$ is instead $l$-overlapping then for $\xi \notin \omega_{k-1}$ we have \eqref{kredundant}
and for $\xi \in \omega_{k-1}$
\begin{equation} \label{kzero}
\sum_{\substack{P \in T \\ |I_P| < 2^k}} c_P 1_{\omega_{P_u}}(\xi) = 0
\end{equation}
and hence \eqref{trunceqfortree}.
\end{proof}

Finally, Theorem \ref{maxmodvartrunctheorem} is obtained from the following bound.  

\begin{corollary} \label{pwcorollary3} 
Suppose that $\P, \T$ and $x$ satisfy the hypotheses of Lemma \ref{pointwiselemmau}. Then for any collection of coefficients $\{c_P\}_{P \in \P} \subset \rea$
\begin{equation*} 
\|\sum_{\substack{P \in \P \\ |I_P| < 2^k}} c_P 1_{\omega_{P_u}}(\xi)\|_{L^\infty_{\xi}(V^r_{k})} \leq C  \sup_{T \in \T} \|\sum_{P \in T} c_P 1_{\omega_{P_u}}\|_{V^r}
\end{equation*}
\end{corollary}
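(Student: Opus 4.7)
The plan is to mimic the proof of Corollary \ref{pwcorollary2}: first invoke the covering by pairwise disjoint intervals $\{\upsilon_T\}_{T\in\T}$ from Lemma \ref{pointwiselemmau}, which guarantees that for each $\xi\in\upsilon_T$ only bitiles from the single tree $T$ contribute to $\sum_{P\in\P}c_P 1_{\omega_{P_u}}(\xi)$. Taking $L^\infty_\xi$ of the $V^r_k$-norm then reduces the claim to a single-tree estimate
\[
\|\sum_{\substack{P \in T \\ |I_P| < 2^k}} c_P 1_{\omega_{P_u}}(\xi)\|_{V^r_k} \le C\,\|\sum_{P \in T} c_P 1_{\omega_{P_u}}\|_{V^r_\xi}
\]
uniformly in $\xi$, to be proved case by case depending on whether $T$ is $l$- or $u$-overlapping.

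The $l$-overlapping case should be immediate. The parity argument already used in \eqref{kredundant}--\eqref{kzero} shows that for a fixed $\xi$ the condition $\xi\in\omega_{P_u}$ with $P\in T$ can hold only at the single scale $|I_P|=2^{j_\xi^*+1}$, where $j_\xi^*$ is the largest integer for which $\xi$ and $\xi_T$ lie in a common dyadic interval of length $2^{-j_\xi^*}$. As a function of $k$, the truncated sum is therefore $0$ for $k\le j_\xi^*+1$ and is the constant $\sum_{P\in T}c_P 1_{\omega_{P_u}}(\xi)$ thereafter, so its $V^r_k$-norm is at most $2|\sum_{P\in T}c_P 1_{\omega_{P_u}}(\xi)|\le 2\|\sum_{P\in T}c_P 1_{\omega_{P_u}}\|_{V^r_\xi}$.

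In the $u$-overlapping case I would introduce the auxiliary partial sums $A_j := \sum_{P\in T,\,|I_P|\le 2^j}c_P$. Since $\xi\in\omega_{P_u}$ is equivalent to $|I_P|\le 2^{j_\xi^*}$, the truncated sum at $\xi$ equals $A_{\min(k-1,\,j_\xi^*)}$; as a truncated sequence has no larger $r$-variation than the original, the $V^r_k$-norm at $\xi$ is bounded by $\|A_j\|_{V^r_j}$. In the other direction the untruncated sum $\sum_{P\in T}c_P 1_{\omega_{P_u}}(\xi)$ equals $A_{j_\xi^*}$ as a function of $\xi$, and as $\xi$ moves outward from $\xi_T$ on one side the index $j_\xi^*$ runs monotonically through consecutive integers, so the resulting step function in $\xi$ realizes every relevant value of $(A_j)$ and its $V^r_\xi$-norm is bounded below by a constant times $\|A_j\|_{V^r_j}$. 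Chaining these two bounds closes the single-tree estimate.

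The step I expect to be the main (if still modest) obstacle is the bookkeeping in the $u$-overlapping case that passes from variation in the truncation parameter $k$ to variation in the frequency $\xi$ through the auxiliary sequence $(A_j)$; everything else is a direct reduction via the partition of $\rea^+$ into intervals provided by Lemma \ref{pointwiselemmau}.
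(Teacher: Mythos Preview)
Your approach is the same as the paper's, and the $l$-overlapping case is handled correctly. In the $u$-overlapping case, however, your claim that ``as $\xi$ moves outward from $\xi_T$ on one side the index $j_\xi^*$ runs monotonically through consecutive integers'' is false: on each side of $\xi_T$ the index $j_\xi^*$ is monotone but skips values (it misses those $j$ for which $\xi_T$ lies in the half of the length-$2^{-j}$ dyadic interval nearer to the chosen side). What rescues the argument --- and what you must make explicit --- is that the $u$-overlapping hypothesis forces $\xi_T\in\omega_{P_u}$, so a bitile $P\in T$ with $|I_P|=2^j$ can exist only when $\xi_T$ lies in the \emph{right} half of the dyadic interval of length $2^{1-j}$ containing it. Hence $A_j\neq A_{j-1}$ is possible only for such $j$, and these are precisely the values $j_\xi^*+1$ that appear as $\xi$ ranges over the \emph{left} side of $\xi_T$. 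So on the left the step function $\xi\mapsto A_{j_\xi^*}$ does realize every distinct value of $(A_j)$ in monotone order, and your bound follows; on the right side it would not.

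The paper carries this out by explicitly constructing a monotone sequence $(\xi_k)$ --- taking $\xi_k$ to be the left endpoint of the largest dyadic interval $\omega$ with $\xi_T$ in its right half and $|\omega|\le 2^{-k+1}$ --- and verifying that $\sum_{P\in T,\,|I_P|<2^k}c_P=\sum_{P\in T}c_P 1_{\omega_{P_u}}(\xi_k)$. This is the precise form of your ``one side'' argument, and the choice of side is not arbitrary.
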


\begin{proof}
Again using the covering in Lemma \ref{pointwiselemmau}, it suffices to observe that for each $\xi$ and $T$
\begin{equation} \label{trunceqfortree2}
\|\sum_{\substack{P \in T \\ |I_P| < 2^k}} c_P 1_{\omega_{P_u}}(\xi)\|_{V^r_{k}} \leq C
\|\sum_{P \in T} c_P 1_{\omega_{P_u}}(\xi')\|_{V^r_{\xi'}}. 
\end{equation}
First suppose that $T$ is $u$-overlapping and let $k_{\xi} = \sup \{k' : \xi \in \omega_{k'} \}$ (here $\omega_k$ is as defined in Corollary \ref{pwcorollary2}). Then for $k \leq k_\xi + 1$
\[
\sum_{\substack{P \in T \\ |I_P| < 2^k}} c_P 1_{\omega_{P_u}}(\xi) = \sum_{\substack{P \in T \\ |I_P| < 2^k}} c_P 
\]
and for $k > k_{\xi} + 1$ 
\[
\sum_{\substack{P \in T \\ |I_P| < 2^k}} c_P 1_{\omega_{P_u}}(\xi) = \sum_{\substack{P \in T \\ |I_P| \leq 2^{k_\xi}}} c_P. 
\]
Thus
\[
\|\sum_{\substack{P \in T \\ |I_P| < 2^k}} c_P 1_{\omega_{P_u}}(\xi)\|_{V^r_{k}} \leq 
\|\sum_{\substack{P \in T \\ |I_P| < 2^k}} c_P \|_{V^r_{k}}. 
\]
For each integer $k$ let $\xi_k$ be the left endpoint of the largest dyadic interval $\omega$ such that $\xi_T$ is in the right half of $\omega$ and $|\omega| \leq 2^{-k+1}$ (if no such dyadic interval exists, let $\xi_k = \xi_T$).
Then the points $\xi_k$ are monotonic in $k$ and
\[
\sum_{\substack{P \in T \\ |I_P| < 2^k}} c_P = \sum_{P \in T } c_P 1_{\omega_{P_u}}(\xi_k)
\]
thus
\begin{equation} \label{ktoxi}
\|\sum_{\substack{P \in T \\ |I_P| < 2^k}} c_P \|_{V^r_{k}} \leq \|\sum_{P \in T} c_P 1_{\omega_{P_u}}(\xi')\|_{V^r_{\xi'}}
\end{equation}
as desired.

For l-overlapping $T$ we have \eqref{kredundant} if $k > \max\{k' : \xi \in \omega_{k'-1}\}$ and \eqref{kzero} otherwise; hence we obtain \eqref{trunceqfortree2}.
\end{proof}

\section{Pointwise maximal multiplier estimates for stacks of trees} \label{tssection}

Suppose that $\P,\T,x,$ and $\{c_P\}_{P \in \P}$ are as in the hypotheses of Lemma \ref{pointwiselemmau}. It follows from Lemmas \ref{pointwiselemmau} and \ref{Walshcrs} that if 
$1 < q < \infty$ and $|\frac{1}{q} - \frac{1}{2}| < \frac{1}{r}$ then 
\[
\|\sum_{P \in \P} c_P 1_{\omega_{P_u}}\|_{M^q} \leq C_{q,r,\epsilon} |\T|^{|\frac{1}{q} - \frac{1}{2}| + \epsilon} \sup_{T \in \T} \|\sum_{P \in T} c_P 1_{\omega_{P_u}}\|_{V^r}.
\]
The aim of the present section is to extend this $M^q$ bound to an $M^{q,*}$ bound through the use of Lemma \ref{hybridlemma}.

\begin{lemma} \label{truncsumlemma}
Suppose that $\P,\T, x,$ and $\{c_P\}_{P \in \P}$ are as above and that $1 < q \leq 2,$ $2 < r < 2q,$ and $\epsilon > 0$. Then 
\[
\|\sum_{\substack{P \in \P \\ |I_P| < 2^k}} c_P 1_{\omega_{P_u}}\|_{M^{q,*}} \leq C_{q,r,\epsilon}  |\T|^{\frac{1}{q} - \frac{1}{r} + \epsilon} \sup_{T \in \T} \|\sum_{P \in T} c_P 1_{\omega_{P_u}}\|_{V^r}.
\]
\end{lemma}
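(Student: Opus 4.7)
The plan is to mimic the application of Lemma \ref{Walshcrs} yielding the untruncated $M^q$ bound displayed at the start of this section, but using Lemma \ref{hybridlemma} in place of Lemma \ref{Walshcrs} so as to incorporate the truncation. First I would invoke Lemma \ref{pointwiselemmau} to obtain pairwise disjoint intervals $\{\upsilon_T\}_{T \in \T}$ covering $\rea^+$ such that for $\xi \in \upsilon_T$ only bitiles $P \in T$ contribute. Setting $\Xi := \{\xi_T\}_{T \in \T}$, $\Upsilon := \{\upsilon_T\}_{T \in \T}$ (so $|\Xi|, |\Upsilon| \leq |\T|$), and $m_{\upsilon_T}(\xi) := 1_{\upsilon_T}(\xi) \sum_{P \in T} c_P 1_{\omega_{P_u}}(\xi)$, one has $\|m_{\upsilon_T}\|_{V^r} \leq C \|\sum_{P \in T} c_P 1_{\omega_{P_u}}\|_{V^r}$, which will produce the right-hand supremum factor.

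Using the structural description underlying Corollary \ref{pwcorollary2}, on each $\upsilon_T$ the truncated sum $\sum_{P \in T, |I_P| < 2^k} c_P 1_{\omega_{P_u}}$ agrees with the full tree sum away from a dyadic interval of length comparable to $2^{-k}$ around $\xi_T$; inside that dyadic interval it is either zero (for properly-sorted $l$-overlapping $T$) or the $k$-dependent constant $c_T^k := \sum_{P \in T, |I_P|<2^k} c_P$ (for td-maximal $u$-overlapping $T$). I would split $m_k = m_k^{\text{out}} + m_k^{\text{in}}$ accordingly. The ``out'' piece can be rewritten as $\sum_\upsilon m_\upsilon - \calD_k^{(1)} \sum_\upsilon m_\upsilon$, where $\calD_k^{(1)}$ has the form required by Lemma \ref{hybridlemma} with $a_\omega := 1$, so that $\calD_k^{(1)}(\xi_T) = 1$ identically in $k$ and hence $\|\calD_k^{(1)}(\xi_T)\|_{V^r_k} \leq 1$. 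Applying Lemma \ref{hybridlemma} to $\calD_k^{(1)} \sum_\upsilon m_\upsilon$ and Lemma \ref{Walshcrs} to the $k$-independent term $\sum_\upsilon m_\upsilon$ then controls the ``out'' piece.

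The main obstacle is the $u$-overlapping ``in'' piece, $m_k^{\text{in}}(\xi) = \sum_T c_T^k 1_{\omega_k^{\xi_T} \cap \upsilon_T}(\xi)$, whose $k$-dependence lies in the coefficients $c_T^k$ rather than just in a set-indicator. The key observation is that the sequence $k \mapsto c_T^k$ has $V^r_k$-norm bounded by $C \|\sum_{P \in T} c_P 1_{\omega_{P_u}}\|_{V^r}$, since each jump in $k$ matches a jump of the tree multiplier across the corresponding dyadic boundary around $\xi_T$. I would then apply Lemma \ref{hybridlemma} a second time, choosing $a_\omega := c_T^k$ whenever $\omega$ is the dyadic interval of length $2^{-k}$ containing $\xi_T$ (which specifies $a_\omega$ as a function of $\omega$ alone, once the case of multiple $\xi_T$'s in a common $\omega$ is disentangled) and $m_{\upsilon_T}^{(2)}(\xi) := 1_{\upsilon_T}(\xi)$ (of bounded $V^r$-norm), correcting for the boundary discrepancy when $\omega_k^{\xi_T}$ is not contained in $\upsilon_T$. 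Summing the two pieces yields the claimed bound with $(|\Xi|+|\Upsilon|)^{\frac{1}{q}-\frac{1}{r}+\epsilon} \sim |\T|^{\frac{1}{q}-\frac{1}{r}+\epsilon}$.
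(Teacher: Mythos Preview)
Your strategy matches the paper's: use Lemma~\ref{pointwiselemmau} to localize to intervals $\{\upsilon_T\}$, split the truncated multiplier into an ``out'' piece handled by Lemmas~\ref{Walshcrs} and~\ref{hybridlemma} (with $a_\omega\equiv 1$), and an ``in'' piece handled by a $\calD_k$-type bound. Your treatment of the out piece and of the $l$-overlapping case is essentially identical to the paper's.

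The gap is in the $u$-overlapping ``in'' piece. Your formulation $m_k^{\text{in}}(\xi)=\sum_T c_T^k\,1_{\omega_k^{\xi_T}\cap\upsilon_T}(\xi)$ is tree-local, and this is what forces the two loose ends you flag parenthetically (``disentangling'' multiple $\xi_T$'s in a common $\omega$, and the ``boundary discrepancy'' when $\omega_k^{\xi_T}\not\subset\upsilon_T$). With your choice $a_\omega:=c_T^k$ and $m^{(2)}_{\upsilon_T}:=1_{\upsilon_T}$, the resulting $\calD_k\sum_\upsilon m^{(2)}_\upsilon=\calD_k$ does \emph{not} reproduce your $m_k^{\text{in}}$: at $\xi\in\upsilon_T\cap\omega_k^{\xi_{T'}}$ with $\omega_k^{\xi_{T'}}\neq\omega_k^{\xi_T}$ you get $c_{T'}^k$ rather than $0$, and the companion error in the out piece does not cancel it. So as written the decomposition is not exact, and the proposal does not say how the correction is carried out.

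The paper avoids both issues by defining the in piece \emph{globally} rather than tree-by-tree. The key identity is that for any dyadic $\omega$ of length $2^{-k}$ and any $\xi\in\omega$,
\[
\sum_{\substack{P\in\P\\|I_P|<2^k}}c_P\,1_{\omega_{P_u}}(\xi)\;=\;\sum_{\substack{P\in\P\\\omega\subsetneq\omega_{P_u}}}c_P\;=:a_\omega,
\]
which is a function of $\omega$ alone, with no reference to $T$ or $\upsilon_T$. Thus the in piece is exactly $\calD_k$ with these $a_\omega$'s, and Lemma~\ref{Lpmaxnocomplemma} (no $m_\upsilon$ needed) applies directly. The $V^r_k$ norm of $k\mapsto a_{\omega_k(\xi_T)}$ is then bounded by observing that this sequence equals $\sum_{P\in T',\,|I_P|<2^k}c_P$ for the tree $T'\in\T$ containing the maximal bitile with $\xi_T\in\omega_{P_u}$ (this uses td-maximality, not the naive guess $T'=T$), after which \eqref{ktoxi} converts the $V^r_k$ norm back into a $V^r_\xi$ norm for $T'$. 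Replacing your tree-local $c_T^k$ by this global $a_\omega$ removes both parenthetical obstacles at once.
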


\begin{proof}
We start by assuming the first condition in Lemma \ref{pointwiselemmau}, i.e. that 
 $\P = \bigcup_{T \in \T}T$ where $\T$ is a collection of $u$-overlapping trees which are td-maximal among all trees contained in $\P$. Without loss of generality we may also assume that $x \in I_T$ for each $T \in \T.$ Let $\Xi = \{\xi_T : T \in \T\}.$ 

If $P \in T$ and $|I_P| \geq 2^k$ then we have $\omega_{P_u}$ contained in the dyadic interval of length $2^{-k}$ about $\xi_T.$ This implies that for 
\[
\xi \in \rea^+ \setminus \bigcup_{\substack{|\omega| = 2^{-k} \\ \omega \cap \Xi \neq \emptyset}}\omega
\]
 we have
\begin{equation} \label{truncsumeq1}
\sum_{\substack{P \in \P \\ |I_P| < 2^k}} c_P 1_{\omega_{P_u}}(\xi) = 
\sum_{P \in \P } c_P 1_{\omega_{P_u}}(\xi)
. 
\end{equation}
If $\omega$ is any dyadic interval of length $2^{-k}$ and $\xi \in \omega$ then
\begin{equation} \label{truncsumeq2}
\sum_{\substack{P \in \P \\ |I_P| < 2^k}} c_P 1_{\omega_{P_u}}(\xi) = \sum_{\substack{P \in \P \\ \omega \subsetneq \omega_{P_u}}} c_P.
\end{equation}
Combining \eqref{truncsumeq1} and \eqref{truncsumeq2} we see that
\begin{multline} \label{truncsumeqmess}
\sum_{\substack{P \in \P \\ |I_P| < 2^k}} c_P 1_{\omega_{P_u}}(\xi)  =  \\ (1 - \sum_{\substack{|\omega| = 2^{-k} \\ \omega \cap \Xi \neq \emptyset}} 1_{\omega}(\xi)) \sum_{P \in \P } c_P 1_{\omega_{P_u}}(\xi)\ \ + \sum_{\substack{|\omega| = 2^{-k} \\ \omega \cap \Xi \neq \emptyset}} 1_{\omega}(\xi) \sum_{\substack{P \in \P \\ \omega \subsetneq \omega_{P_u}}} c_P.
\end{multline}
The right side of \eqref{truncsumeqmess} is the sum of three terms each of which we will bound separately. 
For the first term, we argue as indicated in the discussion at the beginning of this section. Specifically,  
Applying Lemma \ref{pointwiselemmau}, we obtain a collection of intervals $\{\upsilon_T\}_{T \in \T}$ so that 
\begin{equation} \label{dcintotreeseq}
\sum_{P \in \P } c_P 1_{\omega_{P_u}}(\xi) = \sum_{P \in T } c_P 1_{\omega_{P_u}}(\xi)
\end{equation}
for $\xi \in \upsilon_T.$
We then apply Lemma \ref{Walshcrs} with the collection of intervals $\Upsilon = \{\upsilon_T\}_{T \in \T}$ to obtain
\begin{equation*}
\|\sum_{P \in \P } c_P 1_{\omega_{P_u}} \|_{M^q} 
 \leq C_{q,r,\epsilon} |\T|^{\frac{1}{q} - \frac{1}{r} + \epsilon}  \sup_{T \in \T} \|\sum_{P \in T } c_P 1_{\omega_{P_u}}.
\|_{V^r}  
\end{equation*}
For the second term, we note when $\Xi$ is as above and $a_{\omega} = 1$ for each dyadic interval $\omega$ 
\begin{equation} \label{writeasDeltakeq}
\sum_{\substack{|\omega| = 2^{-k} \\ \omega \cap \Xi \neq \emptyset}} 1_{\omega} \sum_{P \in \P } c_P 1_{\omega_{P_u}}  = \calD_k \sum_{P \in \P } c_P 1_{\omega_{P_u}}.
\end{equation}
Combining \eqref{writeasDeltakeq} and \eqref{dcintotreeseq}, we see that Lemma \ref{hybridlemma} gives
\begin{align*}
\|\sum_{\substack{|\omega| = 2^{-k} \\ \omega \cap \Xi \neq \emptyset}} 1_{\omega}  \sum_{P \in \P } c_P 1_{\omega_{P_u}} \|_{M^{q,*}} 
&\leq C_{q,r,\epsilon} (|\Xi| + |\{\upsilon_T\}_{T \in \T}|)^{\frac{1}{q} - \frac{1}{r} + \epsilon} \sup_{T \in \T} \|\sum_{P \in T } c_P 1_{\omega_{P_u}}
\|_{V^r} \\
&\leq C_{q,r,\epsilon} |\T|^{\frac{1}{q} - \frac{1}{r} + \epsilon} \sup_{T \in \T} \|\sum_{P \in T } c_P 1_{\omega_{P_u}}\|_{V^r}
\end{align*}
Finally, for the last term we note that with $\Xi$ as above and 
\begin{equation} \label{writeasDeltak2eq}
a_{\omega} = \sum_{\substack{P \in \P \\ \omega \subsetneq \omega_{P_u}}} c_P
\end{equation}
we have
\[
\sum_{\substack{|\omega| = 2^{-k} \\ \omega \cap \Xi \neq \emptyset}} 1_{\omega} \sum_{\substack{P \in \P \\ \omega \subsetneq \omega_{P_u}}} c_P  = \calD_k 
\]
and so applying Lemma \ref{Lpmaxnocomplemma}
\begin{equation} \label{thirdtermu}
\|\sum_{\substack{|\omega| = 2^{-k} \\ \omega \cap \Xi \neq \emptyset}} 1_{\omega} \sum_{\substack{P \in \P \\ \omega \subsetneq \omega_{P_u}}} c_P \|_{M^{q,*}} 
 \leq C_{q,r,\epsilon} |\T|^{\frac{1}{q} - \frac{1}{r} + \epsilon} \sup_{T \in \T} \|\sum_{|\omega| = 2^k} 1_{\omega}(\xi_T) \sum_{\substack{P \in \P \\ \omega \subsetneq \omega_{P_u}}} c_P \|_{V^r_k}.
\end{equation}
For each $k$, 
\begin{align*}
\sum_{|\omega| = 2^k} 1_{\omega}(\xi_T) \sum_{\substack{P \in \P \\ \omega \subsetneq \omega_{P_u}}} c_P 
= \sum_{\substack{P \in \P \\ |I_P| < 2^k}} c_P 1_{\omega_{P_u}}(\xi_T)
= \sum_{\substack{P \in T' \\ |I_P| < 2^k}} c_P 
\end{align*}
where $T' \in \T$ is the tree containing the maximal element of $\P$ satisfying $\xi_T \in \omega_{P_u}, x \in I_P$, and the last identity follows from td-maximality of the tree $T'$. Therefore, the right side of \eqref{thirdtermu} is 
\begin{align*}
&\leq C_{q,r,\epsilon} |\T|^{\frac{1}{q} - \frac{1}{r} + \epsilon} \sup_{T \in \T} \|\sum_{\substack{P \in T \\ |I_P| < 2^k}} c_P \|_{V^r_k}  \\
&\leq C_{q,r,\epsilon} |\T|^{\frac{1}{q} - \frac{1}{r} + \epsilon} \sup_{T \in \T} \|\sum_{P \in T} c_P 1_{\omega_{P_u}}(\xi)\|_{V^r_\xi}  
\end{align*}
where the second inequality follows from \eqref{ktoxi}. Thus, we obtain the proof of the lemma under the assumption of the first condition. 

We now assume the second condition, i.e. that $\T$ is a properly-sorted collection of $l$-overlapping trees. Again, assume that $x \in I_T$ for each $T \in \T.$ Consider 
\[
\sum_{\substack{P \in \P \\ |I_P| > 2^k}} c_P 1_{\omega_{P_u}}(\xi)
\] 
If $P \in T$ and $I_P > 2^k$ then $\omega_P$ is contained in the dyadic interval of length $2^{-k}$ containing $\xi_T$ and so, letting $\Xi = \{\xi_T : T \in \T\}$ we have
\[
1_{\omega_{P_u}}(\xi) = 1_{\omega_{P_u}}(\xi) \sum_{\substack{|\omega| = 2^{-k} \\ \omega \cap \Xi \neq \emptyset}} 1_{\omega}(\xi)
\] 
where we sum over dyadic intervals $\omega.$ If $P \in T$ with $|I_P| \leq 2^k$ then $\omega_{P_u}$ does not intersect the dyadic interval of length $2^{-k}$ about $\xi_T$ and, furthermore, by \eqref{propsortc} does not intersect the dyadic interval of length $2^{-k}$ about any $\xi_{T'} > \xi_T$ for  $T' \in \T.$ This gives 
\[
 1_{\omega_{P_u}}(\xi) \sum_{\substack{|\omega| = 2^{-k} \\ \omega \cap \Xi \neq \emptyset}} 1_{\omega}(\xi) = 0
\] 
and hence 
\[
\sum_{\substack{P \in \P \\ |I_P| > 2^k}} c_P 1_{\omega_{P_u}}(\xi) = 
\sum_{\substack{|\omega| = 2^{-k} \\ \omega \cap \Xi \neq \emptyset}} 1_{\omega}(\xi) 
\sum_{P \in \P} c_P 1_{\omega_{P_u}}(\xi). 
\]
Thus, 
\[
\sum_{\substack{P \in \P \\ |I_P| \leq 2^k}} c_P 1_{\omega_{P_u}}(\xi) = 
(1 - \sum_{\substack{|\omega| = 2^{-k} \\ \omega \cap \Xi \neq \emptyset}} 1_{\omega}(\xi))
\sum_{P \in \P} c_P 1_{\omega_{P_u}}(\xi)
\]
and the remaining argument follows exactly that for the first two terms in \eqref{truncsumeqmess}.
\end{proof}

\section{Proof of Theorems} \label{poftsection}
Theorem \ref{walshMpstar} is established by Using Lemma \ref{truncsumlemma} to apply the following proposition with $\|\cdot\|_{\calN} = \|\cdot\|_{M^{q,*}}$, $\eta_{P,k} = 1_{(-\infty,2^k)}(|I_P|),$ and $r$ sufficiently close to $2$. 
Using Corollary \ref{pwcorollary} to apply the proposition with $\|\cdot\|_{\calN_{\xi,k}} = \|\cdot\|_{\ell^{\infty}_k(V^r_\xi)}$ and $\eta_{P,k} = 1$ establishes \eqref{mainthm}. 
Using Corollary \ref{pwcorollary2} to apply the proposition with $\|\cdot\|_{\calN_{\xi,k}} = \|\cdot\|_{\ell^{\infty}_k(V^r_\xi)}$ and $\eta_{P,k} = 1_{(-\infty,2^k)}(|I_P|)$ establishes Theorem \ref{maxtruncvarmodtheorem}. 
Using Corollary \ref{pwcorollary3} to apply the proposition with $\|\cdot\|_{\calN_{\xi,k}} = \|\cdot\|_{L^{\infty}_\xi(V^r_k)}$ and $\eta_{P,k} = 1_{(-\infty,2^k)}(|I_P|)$ establishes Theorem \ref{maxmodvartrunctheorem}. 

\begin{proposition} \label{poftprop}
Let $r > 2$, $1 < p < \infty$, and let $\{\eta_{P,k}\}_{P \in \P_0 , k \in \BBZ} \subset \rea$ be a collection of coefficients. Suppose that for all $\P,\T, x,$ and $\{c_P\}_{P \in \P}$ as in the hypotheses of Corollary \ref{pwcorollary}, a norm $\|\cdot\|_{\calN}$ acting on functions defined on $\rea^+ \times \BBZ$ satisfies
\begin{equation} \label{poftassumption} 
\|\sum_{P \in \P} \eta_{P,k} c_P 1_{\omega_{P_u}}\|_{\calN} \leq C |\T|^{\alpha} \sup_{T \in \T} \|\sum_{P \in T} c_P 1_{\omega_{P_u}}\|_{V^r}
\end{equation}
for some $\alpha < \min(1 - \frac{1}{p},\frac{1}{2}).$ Then
\[
\|\sum_{P \in \P_0} \eta_{P,k} \<f,\phi_{P_l}\> \phi_{P_l}(x) 1_{\omega_{P_u}}(\xi)\|_{L^p_x(\calN_{\xi,k})} \leq C_{p,r} \|f\|_{L^p} 
\]
\end{proposition}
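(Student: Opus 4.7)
The plan is to run iterated tree selection on $\P_0$ to obtain a size-graded decomposition, apply the proper-sorting Lemma \ref{propsortlemma} at each level so that the pointwise hypothesis \eqref{poftassumption} can be invoked, control each tree via Lemma \ref{globallemma}, and close the estimate by restricted weak-type bounds and Marcinkiewicz interpolation.

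Iterating Lemma \ref{treeselection} yields $\P_0 = \bigsqcup_{k \geq k_0} \bigcup_{T \in \T_k} T$ with $\size(T, f) \leq 2^{-k}$ for each $T \in \T_k$ and with the counting estimates
\[
\bigl\|\sum_{T \in \T_k} 1_{I_T}\bigr\|_{L^1} \leq C 2^{2k}\|f\|_2^2, \qquad \bigl\|\sum_{T \in \T_k} 1_{I_T}\bigr\|_{\mathrm{BMO}} \leq C 2^{2k}\|f\|_\infty^2;
\]
by the remark following that lemma, each $T$ can be arranged to be td-maximal in $\bigcup_{T \in \T_k} T$. Lemma \ref{propsortlemma} then partitions $\bigcup_{T \in \T_k} T$ into a td-maximal $u$-overlapping family $\T^u_k$ and a properly-sorted $l$-overlapping family $\T^l_k$, both with the same set of top data $\{(\xi_T, I_T)\}$ as $\T_k$, so that the counting bounds descend to $N^\bullet_k := \sum_{T \in \T^\bullet_k} 1_{I_T}$ for $\bullet \in \{u, l\}$. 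For each fixed $x$, level $k$, and $\bullet$, I apply \eqref{poftassumption} to the restricted subfamily $\{T \in \T^\bullet_k : x \in I_T\}$ with $c_P = \<f,\phi_{P_l}\>\phi_{P_l}(x)$; the structural properties required by Corollary \ref{pwcorollary} ($u$-overlapping td-maximality, respectively $l$-overlapping proper-sortedness) are preserved under restriction to $x$-containing bitiles. This yields the pointwise estimate
\[
\bigl\|\sum_{P \in \bigcup \T^\bullet_k} \eta_{P,\cdot}\, \<f,\phi_{P_l}\>\phi_{P_l}(x)\, 1_{\omega_{P_u}}\bigr\|_{\calN} \leq C\, N^\bullet_k(x)^\alpha \max_{T \in \T^\bullet_k,\ x \in I_T} G_T(x),
\]
where $G_T(x) := \|\sum_{P \in T} \<f,\phi_{P_l}\>\phi_{P_l}(x)\, 1_{\omega_{P_u}}(\xi)\|_{V^r_\xi}$.

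Dominating the max by the $\ell^p$-sum over $T$ and integrating reduces the level-$k$ contribution in $\|\cdot\|_{L^p_x(\calN)}^p$ to $\sum_{T \in \T^\bullet_k}\int_{I_T} N^\bullet_k(x)^{\alpha p} G_T(x)^p\, dx$, which I split by H\"older on each $I_T$: Lemma \ref{globallemma} supplies $\|G_T\|_{L^{pa'}(I_T)}^p \leq C 2^{-kp}|I_T|^{1/a'}$ for any $a' > 1$, while John-Nirenberg applied to the BMO counting bound controls the moments of $N^\bullet_k$ on $I_T$, and the $T$-summation uses $\sum_T |I_T| = \|N^\bullet_k\|_{L^1}$ estimated via the $L^1$ counting bound. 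To close, I would specialize to restricted weak-type at $f = 1_E$ (so $\|f\|_\infty = 1$, $\|f\|_2^2 = |E|$) and interpolate between two estimates: an $L^2$-type bound that uses only the $L^1$ counting estimate (forcing $\alpha < 1/2$), and an $L^p$-type bound that uses John-Nirenberg on the BMO estimate (forcing $\alpha < 1-1/p$). Marcinkiewicz interpolation then yields the strong-type conclusion over the full range of valid $p$.

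The main obstacle I expect is the exponent balancing in the last step: a single H\"older split cannot recover the full range $\alpha < \min(1-1/p, 1/2)$, and one must combine two separate estimates that use the $L^1$ and BMO counting bounds complementarily. The hypothesis $\alpha < \min(1 - 1/p, 1/2)$ encodes exactly the non-degeneracy of these two estimates at the $L^2$ and BMO endpoints respectively.
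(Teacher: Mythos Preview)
Your high-level architecture matches the paper: iterate tree selection, pass to properly-sorted $l$-overlapping and td-maximal $u$-overlapping families via Lemma~\ref{propsortlemma}, apply the hypothesis \eqref{poftassumption} pointwise with $c_P=\langle f,\phi_{P_l}\rangle\phi_{P_l}(x)$, and feed in Lemma~\ref{globallemma} to control each $G_T$. The structural preservation claims (proper-sortedness and td-maximality survive restriction to bitiles containing $x$) are correct and are exactly what the paper uses.

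The gap is in the integration step. Dominating $\max_T G_T(x)$ by $(\sum_T G_T(x)^p)^{1/p}$ implicitly costs an extra factor of $N_k(x)^{1/p}$ in the worst case, and this is fatal for $p\le 2$. Concretely, following your H\"older split and using $\|N_k\|_{L^t}\le C\,2^{2k}|F|^{1/t}$ (from $L^1$--BMO interpolation) together with $\|G_T\|_{L^{s}}\le C\,2^{-k}|I_T|^{1/s}$, one obtains
\[
\bigl\|\text{level }k\bigr\|_{L^p(\calN)}\ \le\ C\,2^{k(2\alpha-1+2/(pu'))}\,|F|^{1/p}
\]
for admissible H\"older exponents $u,u'$. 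Summability in $k$ forces $pu'>2/(1-2\alpha)$, while the requirement $\alpha p u\ge 1$ (so that the $N_k$ moment lies in the $L^1$--BMO range) forces $u'\le 1/(1-\alpha p)$ when $\alpha p<1$. These two constraints are compatible only when $p>2$. Since for $1<p\le 2$ the hypothesis reads $\alpha<1-1/p<1/2$, hence $\alpha p<1$, your argument simply does not close there; the ``two complementary estimates'' you allude to both pass through the same lossy $\ell^p$ bound on the maximum and cannot be interpolated to cover $p\le 2$. Your local John--Nirenberg idea also runs into trouble: the moments of $N_k$ on $I_T$ depend on the average $(N_k)_{I_T}$, which picks up an uncontrolled count of trees $T'$ with $I_{T'}\supsetneq I_T$.

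The paper avoids this by working genuinely at the weak-type level. For fixed $\lambda$ it first removes an exceptional set $E_1=\{M[1_F]>c\lambda^p\}$ so that the starting size is $\lesssim\lambda^p$; then it defines $E_2=\bigcup_n\bigcup_T\{G_T>\gamma_n\}$ and $E_3=\bigcup_n\{N_n>\beta_n\}$ with carefully chosen thresholds $\gamma_n\sim 2^{-n}\lambda^p(2^{2n}\lambda^{-p})^{1/s}$ and $\beta_n\sim 2^{2n}\lambda^{-p}$. The key point is that outside $E_2$ one has $\sup_T G_T\le\gamma_n$ \emph{directly}, with no $\ell^p$ loss; the sum over trees enters only in the measure bound $|E_2|\le\sum_n\sum_T\gamma_n^{-s}\|G_T\|_{L^s}^s$, where it is decoupled from the $N_n^\alpha$ factor. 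Outside all exceptional sets the level-$n$ contribution is then $\le C\beta_n^\alpha\gamma_n\sim 2^{-n(1-2\alpha)}\lambda^{p(1-\alpha)}$, which sums (using $\alpha<1/2$) and stays below $\lambda$ (using $\alpha<1-1/p$). This separation of the tree-count from the tree-sum is exactly what your direct-integration route lacks.
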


\begin{proof}
We will prove a restricted weak-type estimate; the full result follows by interpolation. Specifically, we suppose that $|f| \leq 1_F$ and $\lambda > 0$ and want to show
\[
|\{ x : \| \sum_{\substack{P \in \P_0} } \eta_{P,k}\<f,\phi_{P_l}\>\phi_{P_l}(x)1_{\omega_{P_u}}(\xi)\|_{\calN_{\xi,k}} > \lambda \}| \leq C_{p,r} |F|/\lambda^p.
\]
The inequality above will be demonstrated by covering the set on the left side by ``exceptional sets'' $E_1,E_2^u,E_2^l,E_3$ of acceptably small measure. 

We begin by treating the case $\lambda < 1$. There, we set
\[
E_1 := \{M[1_F] \geq c \lambda^p\}
\]
where $M$ is the dyadic Hardy-Littlewood maximal operator. By the weak-type 1-1 estimate for $M$ we have $|E_1| \leq C |F|/\lambda^p$. Since we only need to bound the $\calN$-norm for $x \notin E_1$, we can assume that for every $P \in \P_0$ we have $I \not\subset E_1$ and hence, since the $L^\infty$ norm of the phase-space projection onto any $T$ can be controlled by the projection onto a subtile of an element of $T$,
\[
\size(\P_0,f) \leq C \lambda^p.
\]

For each $n \geq 0$ we apply Lemma \ref{treeselection} with $\P = \P_n$ and set $\P_{n+1}:=\P'$ so that 
\[
\size(\P_{n+1},f) \leq C 2^{-(n+1)} \lambda^p
\]
For each $n$ we apply Lemma \ref{propsortlemma} to the collection of bitiles $\P_n \setminus \P_{n+1}$ and obtain collections of u-overlapping trees $\T^u_n$ and l-overlapping trees $\T^l_n.$ 

Fix $s > 0$ large and $\epsilon > 0$ small  with magnitudes to be determined later and let 
\[
\gamma_{n} = c 2^{-n} \lambda^p (2^{2n} \lambda^{-p})^{1/s} 2^{\epsilon n}.
\]  
Each $T \in \T^u_n$ is contained, by construction, in a convex tree $\ov{T}$ with $\size(\ov{T},f) \leq C 2^{-n} \lambda^p$ and $I_{\ov{T}} = I_T.$ Also, we have
\begin{equation} \label{L1topspf}
\|\sum_{T \in \T_n^u} 1_{I_T}\|_{L^1} \leq C 2^{2n} \lambda^{-2p} |F|.
\end{equation}
Thus, letting 
\[
E_2^u := \bigcup_{n \geq 0} \bigcup_{T \in \T_n^u} \{x : \|\sum_{\substack{P \in T }} \<f,\phi_{P_{l}}\>\phi_{P_{l}}(x)1_{\omega_{P_u}}(
\xi)\|_{V^r_\xi} > \gamma_n \}
\]
we apply Lemma \ref{globallemma} to obtain 
\begin{align*}
|E_2^u| &\leq \sum_{n \geq 0} \sum_{T \in \T_n^u} \gamma^{-s}_n \|\sum_{\substack{P \in T }} \<f,\phi_{P_{l}}\>\phi_{P_{l}}(x)1_{\omega_{P_u}}(
\xi)\|_{L^s_x(V^r_\xi)}^s\\
&\leq C \sum_{n \geq 0} \sum_{T \in \T^u_n} \gamma^{-s}_n  (2^{-n}\lambda^p)^s |I_T| \\
&\leq C \sum_{n \geq 0} \gamma^{-s}_n  (2^{-n} \lambda^p)^s  2^{2n} \lambda^{-2p} |F|\\
&\leq C \sum_{n \geq 0} 2^{-s\epsilon n} |F|/\lambda^p\\
&\leq C |F|/\lambda^p.
\end{align*}

Defining $E_2^l$ analogously, we obtain the same bound.

Let 
\[
\beta_n = c 2^{2n}\lambda^{-p} 2^{\epsilon n}
\]
and 
\[
E_3 := \bigcup_{n \geq 0}\{x : \sum_{T \in \T_n^u} 1_{I_T}(x) > \beta_n\}.
\]
Applying \eqref{L1topspf}, we have
\begin{align*}
|E_3| &\leq C\sum_{n \geq 0} \beta_n^{-1} 2^{2n} \lambda^{-2p} |F| \\
&\leq C \sum_{n \geq 0} 2^{-\epsilon n } |F|/\lambda^p \\
&\leq C |F|/\lambda^p.
\end{align*}
Recall that the trees in $\T^u_n$ and $\T^l_n$ have shared top data and so $E_3$ also gives control over $T \in \T^l_n$.

Fix $x \notin E_1 \cup E_2^u \cup E_2^l \cup E_3;$ we need to show that 
\begin{equation*} 
\| \sum_{\substack{P \in \P_0} } \eta_{P,k} \<f,\phi_{P_l}\>\phi_{P_l}(x)1_{\omega_{P_u}}(\xi)\|_{\calN_{\xi,k}} \leq \lambda.
\end{equation*}
Since every $P \in \P_0$ with $\<f,\phi_{P_l}\> \neq 0$ is in $\P_n \setminus \P_{n+1}$ for some $n$, the left side above is
\begin{equation} \label{sublovern}
\leq \sum_{n \geq 0} \| \sum_{\substack{P \in \P_n \setminus \P_{n+1}} } \eta_{P,k} \<f,\phi_{P_l}\>\phi_{P_l}(x)1_{\omega_{P_u}}(\xi)\|_{\calN_{\xi,k}}.
\end{equation}
For each $n$, we have 
\[
\P_n \setminus \P_{n+1} = \bigcup_{T \in \T_n^u}T \cup \bigcup_{T \in \T_n^l}T
\]
and so by \eqref{thirdconditionps} we see that the $n$'th term in \eqref{sublovern} is
\begin{multline}
\leq \| \sum_{\substack{P \in \bigcup_{T \in \T^u_n}T} } \eta_{P,k}\<f,\phi_{P_l}\>\phi_{P_l}(x)1_{\omega_{P_u}}(\xi)\|_{\calN_{\xi,k}} \\ + \| \sum_{\substack{P \in \bigcup_{T \in \T^l_n} T} } \eta_{P,k}\<f,\phi_{P_l}\>\phi_{P_l}(x)1_{\omega_{P_u}}(\xi)\|_{\calN_{\xi,k}}.
\end{multline}
Letting $\tilde{\T}^u_n = \{(T \cap \{P \in \P_0 : x \in I_P\}, \xi_T, I_T) : T \in T^u_n, x \in I_T\}$ and similarly for $\tilde{\T}^l_n$, the display above is clearly 
\begin{multline}
= \| \sum_{\substack{P \in \bigcup_{T \in \tilde{\T}^u_n}T} } \eta_{P,k}\<f,\phi_{P_l}\>\phi_{P_l}(x)1_{\omega_{P_u}}(\xi)\|_{\calN_{\xi,k}} \\ + \| \sum_{\substack{P \in \bigcup_{T \in \tilde{\T}^l_n} T} } \eta_{P,k}\<f,\phi_{P_l}\>\phi_{P_l}(x)1_{\omega_{P_u}}(\xi)\|_{\calN_{\xi,k}}.
\end{multline}
Noting that $\T^l_n$ is still properly sorted, and that the $\T^u_n$ are still td-maximal among $u$-overlapping trees contained in $\bigcup_{T \in \tilde{\T}^u_n}T$, we may apply \eqref{poftassumption} with $c_P = \<f,\phi_{P_l}\>\phi_{P_l}(x)$ to see that the display above is   
\begin{align*}
&\leq C (|\tilde{\T}^l_n| + |\tilde{\T}^u_n|)^{\alpha} \sup_{T \in \tilde{\T}^u_n \cup \tilde{\T}^l_n} \|\sum_{\substack{P \in T }} \<f,\phi_{P_{l}}\>\phi_{P_{l}}(x)1_{\omega_{P_u}}(
\xi)\|_{V^r_\xi} \\ 
&\leq C \beta_n^{\alpha} \gamma_n \\
&\leq c 2^{-n(1 - 2 \alpha)} 2^{n(\frac{2}{s} + (\alpha + 1)\epsilon)} \lambda^{p(1 - \alpha)} \lambda^{-\frac{p}{s}}
\end{align*}
Since $\alpha < \min(\frac{1}{2}, 1 - \frac{1}{p})$ we may choose $\epsilon$ sufficiently small and $s$ sufficiently large so that the right side above is $\leq c 2^{-\tilde{\epsilon}n} \lambda$
for some $\tilde{\epsilon} > 0$ and hence summing over $n$ and choosing $c$ sufficiently small, we obtain the desired bound for $\lambda < 1$.

In the case that $\lambda \geq 1$ we set $E_1 = \emptyset$ and use the bound $\size(\P_0,f) \leq C$. We decompose $\P_0$ as in the case $\lambda < 1$ so that $\size(\P_n,f) \leq C 2^{-n}.$ Letting
\[
\gamma_n = c 2^{-n} (2^{2n} \lambda^{p})^{1/s} 2^{\epsilon n} 
\] 
we define $E_2^u,E_2^l$ as above and obtain $|E_2^u|,|E_2^l| \leq C |F|/\lambda^p$

Interpolating the bounds 
\begin{equation*} 
\|\sum_{T \in \T_n^u} 1_{I_T}\|_{L^1} \leq C 2^{2n} |F|
\end{equation*}
and 
\begin{equation*} 
\|\sum_{T \in \T_n^u} 1_{I_T}\|_{BMO} \leq C 2^{2n}
\end{equation*}
we see that
\begin{equation*} 
\|\sum_{T \in \T_n^u} 1_{I_T}\|_{L^t} \leq C 2^{2n} |F|^{1/t}
\end{equation*}
where $t < \infty$ is some fixed exponent which will be chosen sufficiently large in a manner to be determined.
Then for each $\beta$
\[
|\{\sum_{T \in \T_n^u} 1_{I_T} > \beta\}| \leq C \beta^{-t} 2^{2tn} |F| 
\]
so, letting
\[
\beta_n = c 2^{2n}\lambda^{\frac{p}{t}} 2^{\epsilon n} 
\]
we define $E_3$ as above and have $|E_3| \leq C |F|/\lambda^p.$

For $x \notin E_2^u \cup E_2^l \cup E_3$ we thus have
\begin{align*}
\| \sum_{\substack{P \in \P_n \setminus \P_{n+1}} } \eta_{P,k}\<f,\phi_{P_l}\>\phi_{P_l}(x)1_{\omega_{P_u}}(\xi)\|_{\calN_{\xi,k}} &\leq C \beta_n^{\alpha} \gamma_n \\
&\leq c 2^{-n(1 - 2 \alpha)} 2^{n(\frac{2}{s} + (\alpha + 1)\epsilon)} \lambda^{p(\frac{\alpha}{t} + \frac{1}{s})}.\\
\end{align*}
Summing over $n$, this is $\leq \lambda$ provided that $s,t$ are chosen sufficiently large and $\epsilon,c$ are chosen suffiently small.
\end{proof}

\section{Variation-norm estimates for multipliers} \label{varmulsection}

The following is an $s$-variation-norm analog of Lemma \ref{truncsumlemma}. By taking $r$ sufficiently (depending on $p,q,s$) close to $2$ it implies Theorem \ref{walshMpvar} through the use of Proposition \ref{poftprop}.

\begin{lemma} \label{truncsumvarlemma}
Suppose that $\P,\T, x,$ and $\{c_P\}_{P \in \P}$ are as in Lemma \ref{truncsumlemma} and that $1 < q \leq 2$, $2 < r < 2q$, $\epsilon > 0$ and $r < s$. Then 
\[
\|\sum_{\substack{P \in \P \\ |I_P| < 2^k}} c_P 1_{\omega_{P_u}}\|_{M^{q,s}} \leq C_{q,r,s,\epsilon} |\T|^{(\frac{1}{2} - \frac{1}{r})\frac{s}{s-2} + \frac{1}{q} - \frac{1}{2} + \epsilon} \sup_{T \in \T} \|\sum_{P \in T} c_P 1_{\omega_{P_u}}\|_{V^r}.
\]
\end{lemma}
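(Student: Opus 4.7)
The plan is to run the proof of Lemma \ref{truncsumlemma} essentially verbatim, but with $s$-variation-in-$k$ analogs of the two maximal-multiplier inputs (Lemmas \ref{Lpmaxnocomplemma} and \ref{hybridlemma}) substituted at every invocation. The only genuinely new work is extracting those $s$-variation analogs; once they are in hand, the tree-by-tree decomposition of the truncated sum from Lemma \ref{truncsumlemma} goes through unchanged.

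First I would upgrade Lemma \ref{varbourgainlemma} (and hence Lemma \ref{Lpmaxnocomplemma}) to an $M^{q,s}$ estimate of the form
\[
\|\calD_k\|_{M^{q,s}} \leq C_{q,r,s,\epsilon}(1+\log|\Xi|)\,|\Xi|^{(\frac{1}{2}-\frac{1}{r})\frac{s}{s-2}+\frac{1}{q}-\frac{1}{2}+\epsilon}\sup_{\xi\in\Xi}\Bigl\|\sum_{|\omega|=2^k}a_\omega 1_\omega(\xi)\Bigr\|_{V^r_k},
\]
following the method of \cite{nazarov10czd}. The exponent arises by decomposing the $V^s_k$ norm into L\'epingle-type jump counts and interpolating between two endpoints: the $\ell^\infty_k$ maximal bound from Lemma \ref{Lpmaxnocomplemma} (contributing $|\Xi|^{1/q-1/r}$ at $s=\infty$) and an $L^q(V^2_k)$ martingale square-function bound (contributing $|\Xi|^{1/2}$ from the $\ell^2$-overlap of the frequency cut-offs localized near $\Xi$). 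Balancing the two with interpolation weight $\theta=2/s$ and absorbing log factors into $\epsilon$ gives the stated exponent, which reduces to $1/q-1/r$ as $s\to\infty$.

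Next I would re-run the proofs of Lemmas \ref{hybridintervallemma} and \ref{hybridlemma} using this new estimate at the $q=2$ endpoint. The Calder\'on--Zygmund bad-function argument is indifferent to the norm placed on $k$: the support of $h$ in $E$ and the cancellation $\<h,\phi_{I\times\omega}\>=0$ are genuine multiplier properties in $\xi$ alone, so only the good-function $L^2$-side changes. Combined with Lemma \ref{intervaldecompositionlemma}, this produces the $s$-variation hybrid
\[
\Bigl\|\calD_k\sum_{\upsilon\in\Upsilon}m_\upsilon\Bigr\|_{M^{q,s}}\leq C_{q,r,s,\epsilon}(|\Xi|+|\Upsilon|)^{(\frac{1}{2}-\frac{1}{r})\frac{s}{s-2}+\frac{1}{q}-\frac{1}{2}+\epsilon}\sup_{\xi}\|\cdots\|_{V^r_k}\sup_{\upsilon}\|m_\upsilon\|_{V^r},
\]
with the summability over $j$ in the Walsh-CRS step requiring $r<2q$ exactly as before.

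Finally I would repeat the proof of Lemma \ref{truncsumlemma}. The identity \eqref{truncsumeqmess} (and its $l$-overlapping counterpart) is pointwise in $(\xi,k)$, so the $M^{q,s}$ norm splits into the same three pieces: the first, being $k$-independent once one notes that multiplication by $1-\sum_\omega 1_\omega(\xi)$ is absorbed into the same tile collection, is controlled by Lemma \ref{Walshcrs} on the partition $\{\upsilon_T\}_{T\in\T}$ from Lemma \ref{pointwiselemmau}; the second by the new $s$-variation hybrid with $\Xi=\{\xi_T\}$ and $\Upsilon=\{\upsilon_T\}$; the third by the new $s$-variation Bourgain lemma applied to $a_\omega=\sum_{\omega\subsetneq\omega_{P_u}}c_P$, combined with the same $V^r_k \to V^r_\xi$ conversion \eqref{ktoxi} that was used in Lemma \ref{truncsumlemma}. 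Each of the three bounds is dominated by the quantity on the right-hand side of the statement.

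The main obstacle is Step 1 — the $V^s_k$ strengthening of Lemma \ref{varbourgainlemma}. Unlike the maximal case one cannot linearize with a single $k^\ast(x)$; all jumps of $k\mapsto(\calD_k\hat g)\check{\ }(x)$ must be tracked simultaneously. This forces the interpolation against a square-function endpoint, and keeping precise track of how the $|\Xi|$-dependence propagates through that interpolation is what produces the degraded exponent $(\tfrac{1}{2}-\tfrac{1}{r})\tfrac{s}{s-2}+\tfrac{1}{q}-\tfrac{1}{2}$, worse than Lemma \ref{Lpmaxnocomplemma} by a factor $|\T|^{(1/2-1/r)\cdot 2/(s-2)}$. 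This extra factor is harmless in the application to Proposition \ref{poftprop}: for $r$ chosen sufficiently close to $2$ (depending on $p,q,s$) the exponent stays strictly below $\min(1-\tfrac{1}{p},\tfrac{1}{2})$ throughout the range $\tfrac{1}{p}+\tfrac{1}{q}<\tfrac{3}{2}$ of Theorem \ref{walshMpvar}, which is precisely the reason the hypothesis on $r$ suffices.
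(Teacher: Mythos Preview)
Your proposal is correct and matches the paper's approach: the paper likewise observes that every step in the proof of Lemma~\ref{truncsumlemma} is insensitive to replacing $M^{q,*}$ by $M^{q,s}$ except for the input Lemma~\ref{varbourgainlemma}, and so reduces the whole statement to an $M^{2,s}$ upgrade of that lemma (Lemma~\ref{Lpvarnocomplemma}), which it proves via an abstract Hilbert-space jump-decomposition proposition (Proposition~\ref{varfuncprop}) drawn from \cite{nazarov10czd}. Your Step~1 heuristic (balancing a maximal endpoint against a square-function endpoint) is a fair sketch of that proposition's mechanism, and the remainder of your outline coincides with the paper's.
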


Except for Lemma \ref{varbourgainlemma} (which is a key element in the proof of Lemma \ref{hybridintervallemma}), each step in the proof of Lemma \ref{truncsumlemma} is insensitive to the difference between the $M^{q,s}$ and the $M^{q,*}$ norms. Thus, to establish Lemma \ref{truncsumvarlemma} it suffices to prove the following variation-norm extension of Lemma \ref{varbourgainlemma}.

\begin{lemma} \label{Lpvarnocomplemma}
Let $r > 2, \epsilon > 0,$ and $\Xi \subset \rea^+.$ Then 
\[
\|\calD_k\|_{M^{2,s}} \leq C_{r,s,\epsilon} |\Xi|^{(\frac{1}{2} - \frac{1}{r})\frac{s}{s-2} + \epsilon} \sup_{\xi \in \Xi} \|\sum_{|\omega| = 2^k } a_{\omega}1_{\omega}(\xi)\|_{V^r_k}.
\]
\end{lemma}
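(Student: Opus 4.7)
\medskip
\noindent\textbf{Proof proposal.}
The plan is to extend Bourgain's multi-frequency maximal multiplier estimate (Lemma \ref{varbourgainlemma}) from an $\ell^\infty_k$ bound to a $V^s_k$ bound, following the strategy used in \cite{nazarov10czd} for the analogous Fourier problem. The two main ingredients are an $L^2$-level $\lambda$-jump inequality and a quantitative passage from jumps to $V^s$ variation.

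First I would adapt the multi-frequency Calder\'on-Zygmund decomposition from the proof of Lemma \ref{varbourgainlemma} in \cite{demeter08twm} to prove the jump estimate
\[
\sup_{\lambda > 0}\lambda\,\| N_\lambda((\calD_k\hat{g})\check{\ }(\cdot))^{1/2}\|_{L^2} \leq C_r(1+\log|\Xi|)|\Xi|^{\frac{1}{2}-\frac{1}{r}} A \|g\|_{L^2},
\]
where $N_\lambda(h_k)$ denotes the number of $\lambda$-jumps in the sequence $(h_k)$ and $A = \sup_{\xi \in \Xi}\|\sum_{|\omega| = 2^k} a_\omega 1_\omega(\xi)\|_{V^r_k}$. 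The argument runs exactly as in the proof of Lemma \ref{varbourgainlemma}: the good-function contribution uses the elementary $L^2$ jump inequality for dyadic martingales (an immediate consequence of orthogonality of martingale differences, corresponding to the ``endpoint at $r=2$'' of L\'epingle's inequality), and the bad-function contribution is killed by the same support and cancellation properties imposed by the multi-frequency Calder\'on-Zygmund decomposition.

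Next I would pass from jumps to the $V^s$ norm using the standard pointwise bound $\|h_k\|_{V^s_k}^s \leq C_s\sum_l 2^{ls} N_{2^l}(h_k)$. Applying the jump inequality controls the contribution from large $\lambda$; for small $\lambda$ the direct sum diverges and must be handled separately. One route is to interpolate the $L^2$ jump inequality against a coarser estimate tracking the number of effective scales of $\calD_k$ at a fixed $x$ (bounded in terms of $|\Xi|$ via the structure of $\calD_k$, which only ``sees'' dyadic intervals meeting $\Xi$). An optimized choice of threshold between the two regimes would yield the exponent $(\frac{1}{2}-\frac{1}{r})\frac{s}{s-2}+\epsilon$; note that $s/(s-2) \to 1$ as $s \to \infty$, recovering Lemma \ref{varbourgainlemma}.

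The main obstacle is the passage in the second step: producing exactly the exponent $\frac{s}{s-2}$ in $|\Xi|$ requires a careful balance between the jump estimate (sharp at large $\lambda$) and the coarser ``effective length'' bound (useful at small $\lambda$). The jump inequality in the first step should be a direct adaptation of the proof in \cite{demeter08twm}; the novelty lies in the conversion, mirroring the Fourier-transform arguments of \cite{nazarov10czd} and the variational analysis of \cite{demeter09osm}.
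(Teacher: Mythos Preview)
There is a genuine gap. First, a misidentification: the proof of Lemma~\ref{varbourgainlemma} in \cite{demeter08twm} does not use a multi-frequency Calder\'on--Zygmund decomposition---that device (as in Lemma~\ref{hybridintervallemma} here) is for passing from $L^2$ to weak $L^1$, and at $q=2$ there is no good/bad split. The $L^2$ argument in \cite{demeter08twm} goes instead through an abstract Hilbert-space maximal inequality (their Proposition~4.2), an averaging argument, and a decomposition of $\Xi$ by scale. So your step~1, as written, does not correspond to an existing argument. More seriously, if your claimed jump inequality with exponent $|\Xi|^{\frac12-\frac1r}$ were available, the standard jump-to-$V^s$ conversion would deliver that \emph{same} exponent for every $s>2$---strictly better than what the lemma asserts. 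Your remark that ``for small $\lambda$ the direct sum diverges'' and your proposed interpolation fix are too vague, and would not naturally produce the factor $\tfrac{s}{s-2}$; indeed that factor signals that the variational estimate genuinely loses relative to the maximal one.

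The paper's route is different: it upgrades Proposition~4.2 of \cite{demeter08twm} to a variational analogue, Proposition~\ref{varfuncprop}, by decomposing the \emph{input} coefficient sequence $\{c_k\}\subset H$ (not the output) via the telescoping ``parent-function'' technique of Lemma~3.2 in \cite{nazarov10czd}. One covers $\{c_k\}$ by $\lambda$-jumps at each dyadic scale $2^n\lambda_0$, defines a nondecreasing map $\rho(n,\cdot)$ recording the jump-cover index at scale $n$, and writes $c_k=c_1+\sum_{n\ge0}(c_{\rho(n,k)}-c_{\rho(n+1,k)})$. Each summand is then bounded in $L^2_a(V^s_k)$ two ways---once via the almost-orthogonality hypothesis \eqref{almostorthocond} and once via the pointwise bound $|g(a)|\le\delta$---and optimizing between the two is exactly what produces the exponent $(\tfrac12-\tfrac1r)\tfrac{s}{s-2}$. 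Corollaries~\ref{wellsepvarbourgain} and~\ref{wellsepvarbourgain2} then follow by the averaging argument of \cite{demeter08twm}, and Lemma~\ref{Lpvarnocomplemma} is assembled from them as in Theorem~4.3 of \cite{nazarov10czd}. The idea you are missing is this input-side telescoping; an output-side jump inequality with the exponent you claim is not known and is not what the method of \cite{nazarov10czd} provides.
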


To prove Lemma \ref{Lpvarnocomplemma}, one follows the method used to prove Lemma \ref{varbourgainlemma} in \cite{demeter08twm} with some refinements which we will now elaborate on. The main advance needed is the following variation-norm version of Proposition 4.2 from \cite{demeter08twm}. 

\begin{proposition} \label{varfuncprop}
Let $H$ be a Hilbert space, $A$ be a finite measure space, $2 < r < s$, and $\delta > 0$. Suppose that we are given a function $g$ from $A$ to $H$ such that for each $a \in A,$ $|g(a)| \leq \delta$ and such that for each $h \in H$
\begin{equation} \label{almostorthocond}
\|\<g(a),h\>\|_{L^2(A)} \leq |h|.
\end{equation}
Then, for each sequence $\{c_k\}_{k \in \BBZ}$ of points in $H$ 
\begin{equation} \label{varfuncbound}
\|\<g(a),c_k\>\|_{L^2_a(V^s_k)} \leq C_{r,s} (\delta^2|A|)^{(\frac{1}{2} - \frac{1}{r})\frac{s}{s-2}}  \|c_k\|_{V^r}.
\end{equation}
\end{proposition}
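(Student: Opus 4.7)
The plan is to adapt the Bourgain-type proof of the $L^\infty_k$-analog (Proposition 4.2 of \cite{demeter08twm}) so that the bound is stated in the finer $V^s_k$-variation norm. The main new ingredient is a reduction of $V^s$ to jump counts, combined with an interpolation step at the level of those counts.

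After normalizing $\|c\|_{V^r}=1$, I would use the elementary inequality
\[
V^s(f_k)^s \leq C_s \sum_{j \in \BBZ} 2^{js} N_{2^j}(f), \qquad s > 2,
\]
where $N_\lambda(f)$ denotes the maximum number of disjoint increments of $f$ of magnitude at least $\lambda$. Applied to $f_k = \langle g(a), c_k\rangle$, this reduces matters to bounding $\|N_\lambda\|_{L^{2/s}_a}$ for each dyadic $\lambda$ and then summing. Two types of bounds on $N_\lambda$ are available: a pointwise bound $N_\lambda(a) \leq C_r (\delta/\lambda)^r$, forced by the $V^r$-hypothesis on $c$ (a $\lambda$-jump of the pairing requires a $(\lambda/\delta)$-jump of $c_k$, and $c_k$ admits at most $C_r(\delta/\lambda)^r$ such jumps in view of $\|c\|_{V^r}=1$); and an $L^1_a$-bound derived from the almost-orthogonality \eqref{almostorthocond}. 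Interpolating these via the elementary inequality $\|N\|_{L^{2/s}_a}^{2/s} \leq \|N\|_{L^1_a}^{2/s} |A|^{1-2/s}$ (valid since $2/s < 1$) and summing the dyadic contributions up to the trivial cutoff $\lambda \lesssim \delta$ should produce the target exponent $(\delta^2|A|)^{(1/2-1/r)s/(s-2)}$.

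The critical step will be obtaining the required $L^1_a$-bound on $N_\lambda$. The difficulty is that $N_\lambda(a)$ involves a supremum over partitions that depend on $a$, and a direct application of \eqref{almostorthocond}---which yields $|\{a : |\langle g(a),h\rangle| \geq \lambda\}| \leq |h|^2/\lambda^2$ for each \emph{fixed} increment $h \in H$---is inadequate: a naive union bound over all admissible partitions grows combinatorially in the partition length. This is precisely the same obstacle overcome by Bourgain's iterative construction in the proof of the $L^\infty$-analog, which proceeds by greedily peeling off near-orthogonal contributions while controlling the residual at each step. My plan is to mirror that construction here, adapted to count jumps rather than to bound the pointwise supremum; the bookkeeping needs to be done carefully to avoid extraneous logarithmic factors that could spoil the sharpness of the $(1/2-1/r)s/(s-2)$ exponent.
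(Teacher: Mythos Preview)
Your outline shares the paper's overall shape---two competing estimates at each dyadic scale, then sum---but the execution diverges at exactly the point you flag as critical, and that gap is real. You propose to bound the $L^1_a$-norm of the jump count $N_\lambda\bigl(\langle g(a),c_\cdot\rangle\bigr)$ by adapting Bourgain's iterative construction. The difficulty is that the jump locations of the pairing depend on $a$, so the almost-orthogonality hypothesis \eqref{almostorthocond}---which controls a single \emph{fixed} inner product---cannot be applied termwise without a selection argument, and a naive union bound over partitions is exponentially lossy. Bourgain's argument for the $L^\infty_k$ case handles one inner product per $a$; extending it to simultaneously control all increments in an $a$-dependent jump sequence is not something you have indicated how to do, and it is not clear it can be done without extraneous losses. (Note also that your ``interpolation'' inequality $\|N\|_{L^{2/s}}^{2/s}\le \|N\|_{L^1}^{2/s}|A|^{1-2/s}$ is just H\"older and does not combine the $L^\infty$ and $L^1$ information; you would in any case need to take the minimum of the two bounds at each scale.)

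The paper (following \cite{nazarov10czd}) sidesteps this entirely by decomposing $c_k$ itself rather than the pairing. One performs the greedy $\lambda$-jump selection on the $H$-valued sequence $\{c_k\}$ at every dyadic level $2^n\lambda_0$, and uses the resulting stopping times to build a telescoping representation $c_k = c_1 + \sum_{n\ge 0}\bigl(c_{\rho(n,k)} - c_{\rho(n+1,k)}\bigr)$, where each $\rho(n,\cdot)$ is nondecreasing in $k$ and takes at most $L_{2^n\lambda_0}$ values. The crucial point is that this structure is \emph{independent of $a$}: the $V^s_k$-norm of $\langle g(a), c_{\rho(n,k)}-c_{\rho(n+1,k)}\rangle$ is then dominated by a fixed $\ell^s$-sum over at most $L_{2^n\lambda_0}$ increments in $H$, and after passing to $\ell^2$ one may apply \eqref{almostorthocond} to each increment separately. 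The two competing bounds at scale $n$ are $(2^n\lambda_0)^{1-r/2}$ (from almost-orthogonality and $L_{2^n\lambda_0}\lesssim (2^n\lambda_0)^{-r}$) and $\delta|A|^{1/2}(2^n\lambda_0)^{1-r/s}$ (from $|g(a)|\le\delta$); choosing the better one at each $n$ and summing gives the stated exponent. In short, the greedy construction you reference is exactly what is needed, but it should be applied to $\{c_k\}$ in $H$ rather than to the scalar pairing, so that the combinatorics become $a$-independent and \eqref{almostorthocond} can be invoked cleanly.
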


The proof of Proposition \ref{varfuncprop} uses the same method as that of Lemma 3.2 in \cite{nazarov10czd}. However, since the statement is more general here, we will repeat the argument. 

\begin{proof}
Let 
\[
\|\cdot\|_{\tilde{V}_s} = \|\cdot\|_{V_s} - \|\cdot\|_{L^{\infty}}.
\]
By Proposition 4.2 of \cite{demeter08twm}, it suffices to prove \eqref{varfuncbound} with the $\tilde{V}^s$ norm in place of the $V^s$ norm. By a limiting argument, we may also assume that our sequence $\{c_k\}_{k=1}^M$ has finite length, provided that $C_{r,s}$ is independent of $M$. 

For each $\lambda > 0$ we cover $\{c_k\}_{k=1}^M$ with respect to $\lambda$-jumps as follows. Set $l(\lambda,1) = 1.$ Suppose that $l(\lambda,1) < \ldots <  l(\lambda,L)$ have been chosen, and let $B(c_{l(\lambda,L)},\lambda)$ denote the ball of radius $\lambda$ centered at $c_{l(\lambda,L)}.$ If $\{c_k : k > l(\lambda,L)\} \subset B(c_{l(\lambda,L)},\lambda)$ then stop and set $L_\lambda = L$ and $l(\lambda,L+1) = \infty.$ Otherwise, let $l(\lambda,L+1)$ be chosen minimally with $l(\lambda,L+1) > l(\lambda,L)$ and $c_{l(\lambda,L+1)} \notin B(c_{l(\lambda,L)},\lambda).$ This process will stop, yielding some $L_{\lambda} \leq M$. It is clear that 
\begin{equation} \label{mefromvar}
\lambda (L_{\lambda} - 1)^{1/r} \leq \|c_k\|_{V^r}\ \ .
\end{equation}

We now define a recursive ``parent'' function based on the covering above. 
Fix some $\lambda_0 < \min\{|c - c'| : c,c' \in \{c_k\}_{k=1}^M \text{\ and } c \neq c' \}.$
For $k=1, \ldots, M$ define $\rho(-1,k) = k$. Once $\rho(n,k)$ has been defined for $n=-1, \ldots, L$ set $\rho(L+1,k) = l(2^{L+1}\lambda_0,m)$ where $m$ is the unique integer satisfying
\[
l(2^{L+1}\lambda_0,m) \leq \rho(L,k) < l(2^{L+1}\lambda_0,m+1).
\]
Notice that we have
\[
|c_{\rho(n,k)} - c_{\rho(n+1,k)}| < 2^{n+1}\lambda_0
\]
and in particular $c_{\rho(0,k)} = c_k.$ Also note that $\rho(n,k) = 1$  whenever $2^n\lambda_0 \geq \diam(\{c_k\}_{k=1}^M).$ Thus
\[
c_k = c_1 + \sum_{n=0}^{\infty} c_{\rho(n,k)} - c_{\rho(n+1,k)}\ \ .
\]
Finally, by induction, one sees that $\rho(n,k)$ is nondecreasing in $k$ for each fixed $n$.

We have
\[
\|\<g(a), c_{k}\> \|_{L^2_a(\tilde{V}^s_k)}
 \leq \sum_{n = 0}^\infty \|\<g(a),(c_{\rho(n,k)} - c_{\rho(n+1,k)})\>\|_{L^2_a(\tilde{V}^s_k)}\ \ .
\]
Observe that the right hand side above
\[
= \sum_{n : L_{2^n\lambda_0} > 1} \|\<g(a), c_{\rho(n,k)} - c_{\rho(n+1,k)}) \>\|_{L^2_a(\tilde{V}^s_k)}.
\]
Using the monotonicity of the $\rho(n,\cdot)$ 
 and the fact that the range of $\rho(n,\cdot)$ is contained in $\{l(2^n\lambda_0,m) : m = 1, \ldots, L_{2^n\lambda_0}\}$ we see that the display above is 
\[
\leq 2 \sum_{n: L_{2^n\lambda_0} > 1}\| \left(\sum_{m = 1}^{L_{2^n\lambda_0}} |\<g(a),(c_{l(2^n\lambda_0,m)} - c_{\tilde{\rho}(n+1,l(2^n\lambda_0,m))})\>|^s \right)^{1/s} \|_{L^2_a}
\]
where we let $\tilde{\rho}(n+1,l(2^n\lambda_0,m))$ denote $l(2^{n+1}\lambda_0,i)$ where $i$ is the unique integer satisfying
\[
l(2^{n+1}\lambda_0,i) \leq l(2^n\lambda_0,m) < l(2^{n+1}\lambda_0,i + 1).
\]
Estimating $\ell^s$ by $\ell^2$, switching the order of integration, and using \eqref{almostorthocond}, we see that the $n$'th term in the outer sum above is
\[
\leq C 2^{n}\lambda_0 L_{2^n\lambda_0}^{1/2} 
\leq C (2^n\lambda_0)^{1 - \frac{r}{2}} \|c_k\|_{V^r}^{\frac{r}{2}}.
\]

We can also estimate the $n$'th term by
\begin{multline*}
\| \left(\sum_{m = 1}^{L_{2^n\lambda_0}} (\delta|c_{l(2^n\lambda_0,m)} - c_{\tilde{\rho}(n+1,(2^n\lambda_0,m))}|)^s \right)^{1/s} \|_{L^2_a} \\\leq \delta |A|^{1/2} \left(\sum_{m = 1}^{L_{2^n\lambda_0}} |c_{l(2^n\lambda_0,m)} - c_{\tilde{\rho}(n+1,l(2^n\lambda_0,m))}|^s \right)^{1/s} 
\\ \leq \delta |A|^{1/2} (2^n\lambda_0)^{1 - \frac{r}{s}} \|c_k\|_{V^r}^{\frac{r}{s}}.
\end{multline*}
Choosing whichever of the two bounds is favorable for each $n$ and summing gives the desired result.

\end{proof}

Through the averaging argument in the proof of Corollary 4.3 from \cite{demeter08twm}, one sees that Proposition \ref{varfuncprop} implies the following two corollaries.

\begin{corollary} \label{wellsepvarbourgain}
Let $r > 2, 1 < q \leq 2,$ and $\Xi \subset \rea^+.$ If no two elements of $\Xi$ are contained in the same dyadic interval of length $1$ then 
\[
\|1_{(-\infty,0]}(k)\calD_k\|_{M^{2,s}} \leq C_{r,s} |\Xi|^{(\frac{1}{2} - \frac{1}{r})\frac{s}{s-2}} \sup_{\xi \in \Xi} \|\sum_{|\omega| = 2^k } a_{\omega}1_{\omega}(\xi)\|_{V^r_k}.
\]
\end{corollary}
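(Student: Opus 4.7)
\bigskip

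\textbf{Plan.} The corollary should be deduced from Proposition \ref{varfuncprop} by the same averaging argument used in the proof of Corollary 4.3 of \cite{demeter08twm} to pass from their abstract Proposition 4.2 to the well-separated maximal-multiplier estimate; here Proposition \ref{varfuncprop} (our variation-norm refinement) simply takes the place of their Proposition 4.2, so the averaging argument is essentially unchanged and the $V^s$-improvement is built in.

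By duality, it suffices to show that $\|(\calD_k \hat g)\check{\ }(x)\|_{L^2_x(V^s_k)} \lesssim |\Xi|^{(\frac{1}{2}-\frac{1}{r})\frac{s}{s-2}} \sup_{\xi \in \Xi}\|\sum_{|\omega|=2^k} a_{\omega} 1_\omega(\xi)\|_{V^r_k}$ for any $g \in L^2(\rea^+)$ with $\|g\|_2 = 1$ and $k \le 0$. The well-separation hypothesis forces the dyadic unit frequency intervals $\omega_0(\xi)$ containing the various $\xi \in \Xi$ to be pairwise disjoint, so the slices $g_\xi := P_{\omega_0(\xi)} g$ are $L^2$-orthogonal with $\sum_\xi \|g_\xi\|_2^2 \le 1$. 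Since $\omega_k(\xi) \subset \omega_0(\xi)$ for $k \le 0$, this gives the slice-by-slice identity $(\calD_k \hat g)\check{\ }(x) = \sum_{\xi} a_{\omega_k(\xi)} P_{\omega_k(\xi)} g_\xi(x)$.

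Using the wave-packet identity from Section \ref{defsection} together with the factorization $\inf \omega_k(\xi) = \inf \omega_0(\xi) \oplus \mu_k(\xi)$ where $\mu_k(\xi) \in [0,1)$ is a multiple of $2^k$, one writes $P_{\omega_k(\xi)} g_\xi(x)$ as the product of the $\pm 1$-valued character $\psi_\xi(x) = e(\inf\omega_0(\xi) \otimes x)$ (which depends on $\xi$ but not on $k$) and a local average of $\psi_\xi g_\xi$ over the dyadic interval $I_x^k$ of length $2^{-k}$ containing $x$, decorated by a character at the fractional frequency $\mu_k(\xi)$. This is the Walsh analog of the modulated-martingale description that drives the averaging argument of \cite{demeter08twm}: one recasts $(\calD_k \hat g)\check{\ }(x)$ as an inner product $\langle \mathfrak{g}(x), c_k\rangle_H$ with $H = \ell^2(\Xi)$ (enlarged as necessary to absorb the dyadic averaging), where $\mathfrak{g}$ is $k$-independent and $c_k$ carries exactly the coefficients $a_{\omega_k(\xi)}$.

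With this setup, the hypotheses of Proposition \ref{varfuncprop} are verified as follows. The orthogonality $\sum_\xi \|g_\xi\|_2^2 \leq 1$ gives the almost-orthogonality inequality $\|\langle \mathfrak{g}(\cdot), h\rangle\|_{L^2(A)} \leq |h|_H$. The pointwise bound $|\mathfrak{g}(a)|_H \leq \delta$ is obtained by discarding the set $\{a : \sum_\xi |g_\xi(a)|^2 > \delta^2\}$, whose measure is $\lesssim \delta^{-2}$, and restricting $A$ to the complement; balancing so that $\delta^2 |A| \sim |\Xi|$ produces the exponent $(\frac{1}{2}-\frac{1}{r})\frac{s}{s-2}$ on $|\Xi|$ when Proposition \ref{varfuncprop} is applied. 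Finally $\|c_k\|_{V^r} \lesssim \sup_\xi \|\sum_\omega a_\omega 1_\omega(\xi)\|_{V^r_k}$, and Proposition \ref{varfuncprop} delivers the bound; the contribution of the discarded set is handled by a standard exceptional-set estimate.

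The main obstacle is the bookkeeping in the middle paragraph: constructing the $k$-independent function $\mathfrak{g}$ carefully enough that the inner-product identity $\langle \mathfrak{g}(x), c_k\rangle = (\calD_k \hat g)\check{\ }(x)$ coexists with both the almost-orthogonality hypothesis and the pointwise bound on $|\mathfrak{g}|_H$. This is exactly the issue resolved in \cite{demeter08twm}, and since the remainder of the averaging argument is insensitive to the difference between the $\ell^\infty$-norm and the $V^s$-norm in $k$, transplanting that construction and invoking Proposition \ref{varfuncprop} in place of \cite[Proposition 4.2]{demeter08twm} completes the proof.
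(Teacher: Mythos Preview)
Your proposal is correct and matches the paper's approach exactly: the paper's entire proof of this corollary is the single sentence ``Through the averaging argument in the proof of Corollary 4.3 from \cite{demeter08twm}, one sees that Proposition \ref{varfuncprop} implies the following two corollaries,'' and your plan is precisely to run that averaging argument with Proposition \ref{varfuncprop} substituted for \cite[Proposition~4.2]{demeter08twm}. Your intermediate sketch (slicing $g$ into the orthogonal pieces $g_\xi$, recasting as an $H$-valued inner product, and verifying the hypotheses of Proposition \ref{varfuncprop}) is a reasonable outline of what that averaging argument does, and you correctly identify that the only delicate bookkeeping---building the $k$-independent $\mathfrak g$ so that both the almost-orthogonality and the pointwise bound hold simultaneously---is already carried out in \cite{demeter08twm} and is insensitive to the $\ell^\infty_k$ vs.\ $V^s_k$ distinction.
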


\begin{corollary} \label{wellsepvarbourgain2}
Let $r > 2, 1 < q \leq 2,$ and $\Xi \subset \rea^+.$ Suppose that for each dyadic interval $\omega$ of length $1$ and each $k \in \BBZ$ we have a coefficient $a_{\omega,k} \in \rea.$ Then  
\[
\|\sum_{\substack{|\omega| = 1 \\ \omega \cap \Xi \neq \emptyset}} a_{\omega,k} 1_{\omega}\|_{M^{2,s}} \leq C_{r,s} |\Xi|^{(\frac{1}{2} - \frac{1}{r})\frac{s}{s-2}} \sup_{\xi \in \Xi} \|\sum_{|\omega| = 1 } a_{\omega,k} 1_{\omega}(\xi)\|_{V^r_k}.
\]
\end{corollary}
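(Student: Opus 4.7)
The plan is to apply Proposition~\ref{varfuncprop} after setting up a Hilbert-space framework in which $(m_k \hat g)\check{\,}(x)$ appears as an inner product $\<G(a), c_k\>$; the averaging argument used for the analogous maximal estimate (Corollary~4.3 of \cite{demeter08twm}) then lets us control the variation in $k$ of this inner product.

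First I reduce to $|\Xi| = |\Omega|$ with $\Omega := \{\omega : |\omega|=1,\ \omega \cap \Xi \neq \emptyset\}$: only the unit intervals meeting $\Xi$ enter $m_k$, and the right-hand side depends on $\xi$ only through the unit interval containing it. Fix a test $g$ with $\|g\|_{L^2}=1$ and let $h_\omega := (1_\omega \hat g)\check{\,}$. The Walsh wavepacket structure supplies two ingredients: (a) for each unit dyadic interval $I$, $\{\phi_{I \times \omega}\}_{\omega \in \Omega}$ is orthonormal in $L^2(I)$ and $h_\omega|_I = \<g,\phi_{I \times \omega}\>\phi_{I \times \omega}|_I$, yielding $\sum_\omega |h_\omega(x)|^2 \leq \|1_{I_x} g\|_{L^2}^2$ by Bessel; (b) $\{h_\omega\}_\omega$ is mutually orthogonal in $L^2(\rea^+)$ with $\sum_\omega \|h_\omega\|_{L^2}^2 \leq 1$. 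These are precisely the Bessel-type and almost-orthogonality estimates that feed into hypotheses (1) and (2) of Proposition~\ref{varfuncprop}, and they force $\delta^2 |A| \lesssim |\Omega|$ when $A$ is taken to be a subset of $\rea^+$ of measure comparable to $|\Omega|$.

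The main obstacle lies in the $V^r$-control of the sequence $c_k$ within the Hilbert space. The naive choice $H = \ell^2(\Omega)$ with $G(x) = (h_\omega(x))_\omega$ and $c_k = (a_{\omega,k})_\omega$ realizes $\<G(x), c_k\>_H = (m_k \hat g)\check{\,}(x)$ and verifies (a) and (b), but forces $\|c_k\|_{V^r(\ell^2(\Omega))} \lesssim |\Omega|^{1/2} \sup_\omega \|a_{\omega,k}\|_{V^r_k}$, producing a spurious factor of $|\Omega|^{1/2}$ that would inflate the exponent above the target $(\tfrac12 - \tfrac1r)\tfrac{s}{s-2}$. The averaging argument of \cite{demeter08twm} removes this loss by rewriting the desired norm as an $L^2$-average over an auxiliary parameter $\tau$ of scalar-valued quantities $\<G_\tau(x), c_k^{(\tau)}\>_{\mathbb{R}}$ for which the Hilbert space is just $\mathbb{R}$; each scalar sequence $c_k^{(\tau)}$ has $V^r$-norm dominated by $\sup_\omega \|a_{\omega,k}\|_{V^r_k}$, while (a) and (b) still give $\delta_\tau^2 |A_\tau| \lesssim |\Omega|$ uniformly in $\tau$. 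Applying Proposition~\ref{varfuncprop} on each slice with $H = \mathbb{R}$ and integrating in $\tau$ then yields the claimed bound with the sharp exponent $(\tfrac12 - \tfrac1r)\tfrac{s}{s-2}$.
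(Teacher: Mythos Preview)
Your proposal is correct and follows the same route the paper indicates: deduce the corollary from Proposition~\ref{varfuncprop} via the averaging argument of Corollary~4.3 in \cite{demeter08twm}. The paper gives no more detail than that citation, and you have correctly identified both the Hilbert-space setup, the obstruction in the naive $\ell^2(\Omega)$ choice of $H$, and the role of the averaging in matching $\delta^2|A|$ with $|\Xi|$ while keeping the $V^r$ input scalar-valued; your description of the averaging as producing, for each auxiliary parameter, a scalar inner product with the correct $V^r$ control is an acceptable gloss on the cited argument.
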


Finally, to see that Corollaries \ref{wellsepvarbourgain} and \ref{wellsepvarbourgain2} imply Lemma \ref{Lpvarnocomplemma}, one argues almost (the substitution of the Walsh-Paley transform for the Fourier transform allows minor technical simplifications) exactly as in the proof of Theorem 4.3 of \cite{nazarov10czd}.

\bibliographystyle{hamsplain}
\bibliography{roberlin}
\end{document}